\def\a{\alpha}
\def\b{\beta}
\def\t{\theta}
\def\d{\delta}
\def\r{\varrho}
\def\g{\gamma}
\def\s{\sigma}
\def\t{\theta}
\def\l{\lambda}
\def\p{\partial}
\def\O{\Omega}
\def\e{\varepsilon}
\def\v{\varphi}
\def\G{\Gamma}
\def\k{\kappa}
\def\o{\omega}
\def\newremark#1{\@ifnextchar[{\@orem{#1}}{\@nrem{#1}}}
\def\@nrem#1#2{%
	\@ifnextchar[{\@xnrem{#1}{#2}}{\@ynrem{#1}{#2}}}
\def\@xnrem#1#2[#3]{\expandafter\@ifdefinable\csname #1\endcsname
	{\@definecounter{#1}\@addtoreset{#1}{#3}%
		\expandafter\xdef\csname the#1\endcsname{\expandafter\noexpand
			\csname the#3\endcsname \@remcountersep \@remcounter{#1}}%
		\global\@namedef{#1}{\@rem{#1}{#2}}\global\@namedef{end#1}{\@endremark}}}
\def\@ynrem#1#2{\expandafter\@ifdefinable\csname #1\endcsname
	{\@definecounter{#1}%
		\expandafter\xdef\csname the#1\endcsname{\@remcounter{#1}}%
		\global\@namedef{#1}{\@rem{#1}{#2}}\global\@namedef{end#1}{\@endremark}}}
\def\@orem#1[#2]#3{\expandafter\@ifdefinable\csname #1\endcsname
	{\global\@namedef{the#1}{\@nameuse{the#2}}%
		\global\@namedef{#1}{\@rem{#2}{#3}}%
		\global\@namedef{end#1}{\@endremark}}}
\def\@rem#1#2{\refstepcounter
	{#1}\@ifnextchar[{\@yrem{#1}{#2}}{\@xrem{#1}{#2}}}
\def\@xrem#1#2{\@beginremark{#2}{\csname the#1\endcsname}\ignorespaces}
\def\@yrem#1#2[#3]{\@opargbeginremark{#2}{\csname
		the#1\endcsname}{#3}\ignorespaces}
\def\@remcounter#1{\noexpand\arabic{#1}}
\def\@remcountersep{.}
\def\@beginremark#1#2{\rm \trivlist \item[\hskip \labelsep{\bf #1\ #2}]}
\def\@opargbeginremark#1#2#3{\rm \trivlist
	\item[\hskip \labelsep{\bf #1\ #2\ (#3)}]}
\def\@endremark{\endtrivlist}
\newcommand{\biindice}[3]%
{
	
	\begin{array}[t]{c}
		#1\\
		{\scriptstyle #2}\\
		{\scriptstyle #3}
	\end{array}
	
}
\def\a{\alpha}
\def\b{\beta}
\def\t{\theta}
\def\d{\delta}
\def\r{\varrho}
\def\g{\gamma}
\def\s{\sigma}
\def\t{\theta}
\def\l{\lambda}
\def\p{\partial}
\def\O{\Omega}
\def\e{\varepsilon}
\def\v{\varphi}
\def\G{\Gamma}
\def\k{\kappa}
\def\o{\omega}
\def\mc{\mathcal}
\def\mf{\mathfrak}
\def\ua{\uparrow}
\def\da{\downarrow}
\newcommand{\R}{\mathbb R}
\newcommand{\overbar}[1]{\mkern 1.5mu\overline{\mkern-1.5mu#1\mkern-1.5mu}\mkern 1.5mu}
\def\supp{\mbox{\rm supp\,}}
\numberwithin{equation}{section}
\theoremstyle{definition}
\theoremstyle{plain}
\newtheorem{theorem}{Theorem}[section]
\newtheorem{lemma}{Lemma}[section]
\newtheorem{corollary}{Corollary}[section]
\newtheorem{remark}{Remark}[section]
\title{\vskip-2.5cm
{A robust multiplicity result in a generalized diffusive predator-prey model}
\thanks{AMS Subject Classification: 35J57, 35Q92, 35A16.\\
This paper has been written under the auspices of the Ministry of Science and Innnovation of Spain under Reserach Grant PID2021-123343NB-I00, and  the Institute of Interdisciplinary Mathematics of
Complutense University of Madrid. The second author, ORCID: 0000-0003-1184-6231,  has been also supported by contract CT42/18-CT43/18 of Complutense University of Madrid} }
\author{
	\sc Juli\'an L\'opez-G\' omez
	\\
	\small Universidad Complutense de Madrid
	\\
	\small Instituto de Matem\'{a}tica Interdisciplinar (IMI)
	\\
	\small Departamento de An\'alisis Matem\'atico y Matem\'atica
	Aplicada
	\\
	\small  Plaza de las Ciencias 3, 28040   Madrid, Spain
	\\
	\small E-mail: {\tt   julian@mat.ucm.es}
	\medskip
	\\
	\sc Eduardo Mu\~{n}oz-Hern\'andez
	\\
	\small Universidad Complutense de Madrid
	\\
	\small Instituto de Matem\'{a}tica Interdisciplinar (IMI)
	\\
	\small Departamento de An\'alisis Matem\'atico y Matem\'atica
	Aplicada
	\\
	\small  Plaza de las Ciencias 3, 28040   Madrid, Spain
	\\
	\small E-mail: {\tt eduardmu@ucm.es }
	\bigskip
}
\begin{document}
	\maketitle

\begin{abstract}
This paper analyzes the generalized spatially heterogeneous diffusive predator-prey model introduced by the authors in  \cite{LGMH20}, whose interaction terms depend on a saturation coefficient $m(x)\gneq0$. As the amplitude of the saturation term, measured by $\|m\|_\infty$,  blows up to infinity, the existence of, at least, two coexistence states, is established in the region of the parameters where the semitrivial positive solution is linearly stable, regardless the sizes and the shapes of the remaining function coefficients in the setting of the model. In some further special cases, an $S$-shaped  component of coexistence states can be constructed, which causes the existence of, at least, three coexistence states, though
this multiplicity occurs within the parameter regions where the semitrivial positive solution is linearly unstable. Therefore, these multiplicity results inherit a rather different nature.
\end{abstract}

\section{Introduction}
	
\noindent This paper studies the existence and multiplicity of coexistence states for the generalized spatially heterogeneous predator-prey model
\begin{equation}
\label{1.1}
\left\{
\begin{array}{lll}
\mf{L}_1 u=\lambda u - a(x)u^2 - b(x)\dfrac{uv}{1+ \g m(x)u}&\quad \hbox{in}\;\;\Omega, \\[10pt]
\mf{L}_2 v=\mu v -d(x)v^2+ c(x)\dfrac{uv}{1+ \g m(x)u} &\quad \hbox{in}\;\;\Omega,\\[10pt]
\mf{B}_1 u=\mf{B}_2 v=0 &\quad\hbox{on}\;\;\partial\Omega,
\end{array}
\right.
\end{equation}
where $\Omega$ is a bounded domain of $\R^N$ with boundary, $\partial\Omega$,  of class $\mc{C}^2$,  and $\mf{L}_\kappa$, $\kappa=1,2$, are second order uniformly elliptic operators in $\O$ of the form
\begin{equation}
\label{1.2}
		\mf{L}_\kappa:=-\mathrm{div\,}(A_\kappa \nabla ) +\langle b_\kappa,\nabla\rangle +c_\kappa, \qquad \kappa=1,2,
\end{equation}
where, for every $\kappa=1,2$,
\begin{equation*}
   A_\kappa=\left(a_{ij}^\kappa\right)_{1\leq i,j \leq N}\in
   \mathscr{M}_N^\mathrm{sym}(W^{1,\infty}(\O)), \quad
   b_\kappa=(b_1^\kappa,...,b_N^\kappa)\in (L^\infty(\O))^N,\quad c_\kappa \in L^\infty(\O).
\end{equation*}
For a given Banach space $X$, we are denoting by $\mathscr{M}_N^\mathrm{sym}(X)$ the space of the symmetric
square matrices of order $N$ with entries in $X$, and $W^{1,\infty}(\O)$ stands for the Sobolev space of
all bounded and measurable functions in $\O$ with weak derivatives in $L^\infty(\O)$. In
\eqref{1.1}, for every $\kappa=1, 2$, $\mf{B}_\kappa$ is a general boundary operator of mixed
type such that, for every $\psi \in \mc{C}(\bar\O)\cap\mc{C}^1(\O\cup\G_1^\kappa)$,
\begin{equation}
		\label{1.3}
		\mf{B}_\kappa \psi =\left\{ \begin{array}{ll}
			\psi &\quad\hbox{on}\;\;\Gamma_0^\kappa,\\[5pt]
			\partial_{\nu_\kappa} \psi+ \b_\kappa(x)\psi&\quad\hbox{on}\;\;\Gamma_1^\kappa,
		\end{array}
		\right.
	\end{equation}
where $\Gamma_0^\kappa$ and $\Gamma_1^\kappa$ are two closed and open disjoint subsets of
$\partial\Omega$ such that $\Gamma_0^\kappa\cup\Gamma_1^\kappa=\partial\Omega$, and
$\nu_\kappa= A_\kappa n$ is the co-normal vector field, i.e., $n$ is the outward unit normal vector field
of $\O$. In \eqref{1.3}, $\b_\kappa \in \mc{C}(\G_1^\kappa)$ is not required to have any special sign.
As for the coefficient functions $a(x)$, $b(x)$, $c(x)$, $d(x)$ and $m(x)$ in the setting of \eqref{1.1},
we assume that they are functions in $\mc{C}(\bar\O;\R)$ such that $b\neq0$, $c\neq 0$, and
\begin{equation}
\label{1.4}
a(x)>0,\;\; d(x)>0, \;\; b(x)\geq 0,\;\; c(x)\geq 0,\;\; m(x)\geq 0 \quad\hbox{for all}\;\, x\in\bar\O.
\end{equation}
In other words, $a\gg 0$, $d\gg 0$, $b\gneq 0$, $c\gneq 0$ and
$m\geq 0$. Finally, in \eqref{1.1}, $\l$, $\mu$ and $\g>0$ are regarded as real parameters.
\par
Except for the incorporation of the new parameter $\g>0$,  this  model, in its greatest generality, was introduced by the authors in \cite{LGMH20} to establish an homotopy between the classical diffusive Lotka--Volterra predator-prey system, when $m=0$, and the diffusive Holling--Tanner model introduced by Casal, Eilbeck and L\'{o}pez-G\'{o}mez  \cite{CELG}, where $m$ is a positive constant. The case when $m$ is constant has been also analyzed by Du and Lou in \cite{DL-1997}, \cite{DL-1998} and \cite{DL-2001}, under Dirichlet or Neumann boundary conditions, and Du and Shi \cite{DS-2006}  assuming the existence of a protection zone for the prey.
Some pioneering non-spatial models of this type were studied by Freedman \cite{HFR},  May \cite{May} and Hsu \cite{Hsu}, among others.
\par
In Population Dynamics, \eqref{1.1} represents the interaction in a common habitat, $\O$,
between a prey, with population density $u$, and a predator, with population density $v$.  According to \eqref{1.1}, in the absence of the other, each species has a  logistic growth determined by the relative sizes of $\l$ and $\mu$ with respect to the thresholds   $\s_0[\mf{L}_1-c_1,\mf{B}_1,\O]$ and
$\s_0[\mf{L}_2-c_2,\mf{B}_2,\O]$, respectively. Throughout this paper, for any given second order
elliptic operator $\mf{L}$ in $\O$ and any boundary operator $\mf{B}$ on $\p\O$, we denote by
$\s_0[\mf{L},\mf{B},\O]$ the principal eigenvalue of $(\mf{L},\mf{B},\O)$ as discussed in
\cite{LG01}.  In  \eqref{1.1}, the term $\g m(x)$ measures the saturation effects in $\O$ of the predator in the presence of a high population of preys. More precisely, for every $x\in\O$, $\g m(x)$ measures the predator saturation level at the location $x\in\O$ if $m(x)>0$, while the saturation effects at $x$ do not play any role if $m(x)=0$. By normalizing $m(x)$ so that $\max_{x\in\bar \O}m(x)=1$, $\g$ becomes the maximal intensity of the saturations effects. So, throughout this paper we will assume that
\begin{equation}
\label{1.5}
   \|m\|_\infty\equiv \max_{x\in\bar\O}\,m(x)=1.
\end{equation}
Furthermore, we assume that $\O_0:= \mathrm{int\,}m^{-1}(0)$ is a nice open subset of class  $\mc{C}^2$ of $\O$ with finitely many connected components and $\bar \O_0=m^{-1}(0)\subset \O$.
Thus, \eqref{1.1} combines in the same habitat, $\O$,  functional responses of Lotka--Volterra type in the components of $m^{-1}(0)$ together with Holling--Tanner responses in $m^{-1}(\R_+)$, where $\R_+:=(0,+\infty)$.
As noticed in Sections 3 and 5 of \cite{LGMH20}, the existence of both functional responses can lead to global effects in the dynamics of the species, regardless the sizes of the patches where $m=0$ or $m>0$. Moreover, the size of the regions where $m(x)$ or $b(x)$ degenerate can also affect the global dynamics. Indeed, as shown in Section \ref{sec4}, the greater is the support of $m(x)$, or $b^{-1}(0)$, the smaller can be $\l$ so that \eqref{1.1} can still admit a coexistence state.
\par
Essentially, this paper is a continuation of \cite{LGMH20}, where the existence and the uniqueness of coexistence states was established for the generalized problem \eqref{1.1}, by fixing $\mu\in\R$ and regarding  $\l\in\mathbb{R}$ as a bifurcation parameter. According to Theorem 7.1 of
\cite{LGMH20}, we already know that the one-dimensional counterpart of \eqref{1.1} has a unique
coexistence state for sufficiently small $\g>0$. The main goal of this paper is to study the dynamics of \eqref{1.1} as $\g\ua +\infty$. Thus, it is rather natural to perform the change of variables
\begin{equation}
\label{1.6}
w:=\g \,u,\qquad \varepsilon=\frac{1}{\g}.
\end{equation}
In these variables, \eqref{1.1} can be expressed, equivalently, as
\begin{equation}
\label{1.7}
\left\{
\begin{array}{lll}
\mf{L}_1 w=\lambda w - \varepsilon a(x)w^2 -b(x)\dfrac{wv}{1+m(x)w} &\quad \hbox{in}\;\;\Omega,\\[10pt]
\mf{L}_2 v=\mu v -d(x)v^2+ \varepsilon c(x)\dfrac{wv}{1+m(x)w}&\quad \hbox{in}\;\;\Omega,\\[10pt]
\mf{B}_1 w=\mf{B}_2 v=0 &\quad\hbox{on}\;\;\partial\Omega.
\end{array}
\right.
\end{equation}
According to \eqref{1.6}, analyzing the dynamics of \eqref{1.1} for sufficiently large $\g$
is equivalent to analyze \eqref{1.7} for sufficiently small $\e>0$. Thus, it is rather natural to focus attention into \eqref{1.7} as a system perturbing from
\begin{equation}
\label{1.8}
\left\{
\begin{array}{lll}
\mf{L}_1 w=\lambda w -b(x)\dfrac{wv}{1+m(x)w} &\quad \hbox{in}\;\;\Omega,\\[10pt]
\mf{L}_2 v=\mu v -d(x)v^2&\quad \hbox{in}\;\;\Omega,\\[10pt]
\mf{B}_1 w=\mf{B}_2 v=0 &\quad\hbox{on}\;\;\partial\Omega.
\end{array}
\right.
\end{equation}
This problem has the tremendous advantage that it is  uncoupled.
\par
Our main results establish, for every $\e\geq 0$,  the existence of a component $\mathscr{C}_\e^+$ of the set of coexistence states of \eqref{1.7}, or $\eqref{1.8}$,  and ascertain their global structures according to weather $\e>0$, or $\e=0$. Precisely, when $\e=0$, Theorems \ref{th4.1} and \ref{th4.2} show that $\mathscr{C}_0^+$ behaves much like sketched in Figure \ref{Fig3}, where the constants $\Phi(\mu)$ and $\v_0(\mu)$ are defined in \eqref{3.16} and \eqref{3.19}, respectively. Later, Theorem \ref{th5.1} shows that, as $\e>0$ perturbs from $\e=0$, the component $\mathscr{C}_0^+$ perturbs into $\mathscr{C}_\e^+$ and that,
since  the coexistence states of \eqref{1.7} have uniform a priori bounds on compact subintervals of the
parameter $\l$, for any given $\eta>0$, there exists $\e_0=\e_0(\eta)>0$ such that $\mathscr{C}_\e^+$ has, at least, two coexistence states for every $\l\in [\v_0(\mu)-\eta,\Phi(\mu)-\eta]$ if $\e \in (0,\e_0]$, as illustrated by Figure \ref{Fig5}. This multiplicity result is new even for the simplest prototype model introduced by Casal et al. \cite{CELG}.
\par
Although in the classical setting of Casal et al. \cite{CELG}, Du and Lou \cite{DL-1998} proved the existence of the $S$-shaped diagrams computed in \cite{CELG} for sufficiently large $\g>0$ and $c>0$, with $\mu>\s_{0,2}\equiv \s_0[\mf{L}_2,\mf{B}_2,\O]$ sufficiently close to $\s_{0,2}$, the reader should be aware that, in this paper, $c(x)$ can degenerate and take arbitrary values, and that $\mu>\s_{0,2}$ is arbitrary. Actually, the multiplicity result of this paper has a different nature than the inherent to the $S$-shaped diagrams discovered  in \cite{CELG}. In $S$-shaped bifurcation diagrams, the problem has, at least, two coexistence states if $\l\in [\Phi(\mu)-\eta,\Phi(\mu)]$, while it has, at least, three, if $\l\in (\Phi(\mu),\Phi(\mu)+\eta]$, for sufficiently small $\eta>0$, as illustrated in the second picture of Figure \ref{Fig8}. In strong contrast, the main result of this paper establishes that, for sufficiently large $\g>0$, \eqref{1.1} has, at least,  two coexistence states in any compact subinterval of $(\v_0(\mu),\Phi(\mu))$, regardless the size and shape of the function coefficient $c(x)$ and how large is $\mu$. Rather surprisingly,
this occurs regardless the size of the support of the saturation term, measured by $m(x)$, which might be 
arbitrarily small, as is an atom in a Galaxy. A similar phenomenon, though in a very different problem,
was observed by L\'{o}pez-G\'{o}mez and Rabinowitz \cite{LGRab}. 
\par
We end this paper by analyzing a simple prototype model with constant coefficients and
non-flux boundary conditions, where the constant steady-states are given by a simple algebraic system.
Among other things, we will establish the existence of $S$-shaped curves of coexistence states
when $bc>ad$ and $\e$ is sufficiently large. This example shows that our multiplicity theorem, for sufficiently small $\e>0$,  has nothing to do with the formation of $S$-shaped components of coexistence states.
\par
The plan of this paper is the following. Section \ref{sec2} introduces some notations and abstract results that are used throughout the paper. Section \ref{sec3} studies the stability of the semitrivial curve $(0,\t_{[\mf{L}_2,\mf{B}_2,\O]})$, where $\t_{[\mf{L}_2,\mf{B}_2,\O]}$ stands for the unique positive solution of
\begin{equation*}
\left\{
\begin{array}{ll}
\mf{L}_2=\mu v-d v^2
&\quad \hbox{in}\;\O,\\[1ex]
\mf{B}_2v=0&\quad \hbox{on}\;\partial\O,
\end{array}
\right.
\end{equation*}
which exists if, and only if, $\mu>\s_{0,2}$, and analyzes the local bifurcation to coexistence states of \eqref{1.7} from it, with special emphasis on the uniform dependence of these local bifurcations on the parameter $\e\geq 0$, which is a subtle issue. Section \ref{sec4} studies the uncoupled system \eqref{1.8}, establishing the global structure  of the component $\mathscr{C}_0^+$ near $\v_0(\mu)$, its bifurcation point from infinity, and $\Phi(\mu)$, its bifurcation point from the semitrivial positive solution
$(0,\t_{[\mf{L}_2,\mf{B}_2,\O]})$. Then, the analysis carried out in Sections \ref{sec3} and \ref{sec4} combined  with some sophisticated topological and global
continuation arguments, will drive us to the proof of
Theorem \ref{th5.1} of Section \ref{sec5}, which is our main multiplicity result. Finally, in Section \ref{sec6} we analyze a very simple example with $S$-shaped components of coexistence states. A previous analysis of this example is imperative for tackling the problem of the global existence of $S$-shaped bifurcation diagrams in its greatest generality, which will be pursued in a  forthcoming paper.

\section{Preliminaries}
\label{sec2}
This section collects some results scattered in a series of papers and monographs that are going to be used throughout this paper. As a direct consequence of the elliptic $L^p$-theory (see, e.g.,
Chapters 4 and 5 of \cite{LG13}), it becomes apparent that any non-negative weak solution of \eqref{1.7}, $(w,v)$, satisfies
\begin{equation*}
	w\in \mathscr{W}_1 \equiv  \bigcap_{p\geq N} W^{2,p}_{\mf{B}_1}(\O),
    \qquad v\in \mathscr{W}_2 \equiv  \bigcap_{p\geq N}W^{2,p}_{\mf{B}_2}(\O),
\end{equation*}
where, for every  $\kappa=1, 2$ and $p\geq N$,  $W^{2,p}_{\mf{B}_\kappa}(\O)$ stands for the Sobolev space of the functions $z\in W^{2,p}(\O)$ such that $\mf{B}_\kappa z=0$ on $\p\O$. Thus, $(w,v)$ is a strong solution of \eqref{1.7}. In particular, $w$ and $v$ are twice classically differentiable almost everywhere in $\O$ and they are classical solutions in the sense of \cite[Def. 4.1]{LG13}. By the Sobolev embeddings and the
Rellich--Kondrashov theorem, it is easily seen that $ \mathscr{W}_\k \hookrightarrow \mc{C}_{\mf{B}_\k}^1(\bar\O)$, $\k=1, 2$, with compact embeddings, where $\mc{C}_{\mf{B}_\k}^1(\bar\O)$
stands for the set of functions $z\in \mc{C}^1(\bar\O)$ such that $\mf{B}_\k z =0$ on $\p\O$
(see \cite[Ch. 4]{LG13} if necessary).

Throughout this paper, for every weight function  $V\in L^\infty(\O)$ and $\kappa=1, 2$, we denote by
$\s_0[\mf{L}_\kappa+V,\mf{B}_\kappa,\O]$ the principal eigenvalue of the linear eigenvalue problem
\begin{equation}
\label{ii.1}
	\left\{ \begin{array}{ll} \left(\mf{L}_\kappa+V\right)\v =\tau \v & \quad \hbox{in}\;\; \O, \\
    \mf{B}_\kappa \v =0 & \quad \hbox{on}\;\;\p\O, \end{array}\right.
\end{equation}
whose existence and uniqueness in our general setting was established by \cite[Th. 7.7]{LG13}. According to
Corollary 7.1 and Theorem 7.9 of \cite{LG13},  $\s_0[\mf{L}_\kappa+V,\mf{B}_\kappa,\O]$ is strictly dominant and algebraically simple. In particular, it is the lowest real eigenvalue. Moreover, by \cite[Th. 7.6]{LG13}, for every $\kappa=1, 2$,  the associated principal eigenfunction, unique up to a multiplicative positive constant, can be taken to be strongly positive in $\O$, $\v\gg_\k 0$, in the sense that
\begin{equation*}
	\v(x)>0 \;\;\hbox{for all}\;\; x\in\O\cup\G_1^\kappa \;\;\hbox{and}\;\; \frac{\p \v}{\p n}(x)<0 \;\;\hbox{for all}\;\; x\in \G_0^\kappa,
\end{equation*}
where $n$ stands for the outward unit vector field to $\O$ along $\p\O$. Subsequently, we
collect some important results that are going to be invoked throughout this paper. The first one, going back to Cano-Casanova and L\'{o}pez-G\'{o}mez \cite{CCLG} in its present generality, establishes the monotonicity of the principal eigenvalue with respect to the potential.

\begin{theorem}
\label{th2.1}
Let $V_1,V_2\in L^{\infty}(\Omega)$ be such that $V_1\lneq V_2$. Then, for every $\k=1, 2$,
\[
   \sigma_0\left[\mf{L}_\k+V_1,\mf{B}_\k,\O\right]<\sigma_0\left[\mf{L}_\k+V_2,\mf{B}_\k,\O\right].
\]
Thus, the map $V\mapsto \sigma_0\left[\mf{L}_k+V,\mf{B}_k,\O\right]$ is continuous in $L^\infty(\O)$  and increasing.
\end{theorem}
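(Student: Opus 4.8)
\noindent\emph{Proof proposal.} I would split the claim into three pieces --- strict monotonicity, non-strict monotonicity, and continuity --- and reduce all of them to the properties of the principal eigenvalue recalled above together with the characterization of the strong maximum principle borrowed from \cite{CCLG} (see also \cite[Ch. 7]{LG13}), namely that $\s_0[\mf{M},\mf{B}_\k,\O]>0$ as soon as $(\mf{M},\mf{B}_\k,\O)$ admits a positive strict supersolution. The only extra ingredient is the trivial shift identity
\[
   \s_0[\mf{L}_\k+V+c,\mf{B}_\k,\O]=\s_0[\mf{L}_\k+V,\mf{B}_\k,\O]+c\qquad(V\in L^\infty(\O),\ c\in\R,\ \k=1,2),
\]
which holds because adding the constant $c$ to both the potential and the eigenvalue in \eqref{ii.1} carries principal eigenpairs into principal eigenpairs.

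To establish the strict inequality, I would set $\s_i:=\s_0[\mf{L}_\k+V_i,\mf{B}_\k,\O]$, pick a principal eigenfunction $\varphi_1\gg_\k 0$ associated with $\s_1$ (its strong positivity being guaranteed by \cite[Th. 7.6]{LG13}), and observe that, since $(\mf{L}_\k+V_1)\varphi_1=\s_1\varphi_1$,
\[
   (\mf{L}_\k+V_2-\s_1)\varphi_1=(V_2-V_1)\varphi_1\gneq 0\ \ \hbox{in}\ \O,\qquad \mf{B}_\k\varphi_1=0\ \ \hbox{on}\ \p\O,
\]
where the sign follows by combining $V_2-V_1\gneq 0$ with $\varphi_1\gg_\k 0$. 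Hence $\varphi_1$ is a positive strict supersolution of $(\mf{L}_\k+V_2-\s_1,\mf{B}_\k,\O)$, so that $\s_0[\mf{L}_\k+V_2-\s_1,\mf{B}_\k,\O]>0$; by the shift identity this reads $\s_2-\s_1>0$, as desired. Running the same computation with $V_1\le V_2$ only (replacing $V_2$ by $V_2+\tfrac1n$ to recover a strict ordering, applying the strict case, and letting $n\to\infty$) yields the non-strict monotonicity $\s_0[\mf{L}_\k+V_1,\mf{B}_\k,\O]\le\s_0[\mf{L}_\k+V_2,\mf{B}_\k,\O]$ whenever $V_1\le V_2$ a.e. in $\O$.

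Continuity --- in fact, $1$-Lipschitz continuity --- would then follow by a squeezing argument: for $V,W\in L^\infty(\O)$ one has $W\le V+\|V-W\|_\infty$ and $V\le W+\|V-W\|_\infty$ a.e. in $\O$, whence the non-strict monotonicity and the shift identity give $|\s_0[\mf{L}_\k+V,\mf{B}_\k,\O]-\s_0[\mf{L}_\k+W,\mf{B}_\k,\O]|\le\|V-W\|_\infty$; the word ``increasing'' in the statement is precisely the strict monotonicity already proven. The main --- and essentially the only --- obstacle is the use of the equivalence \emph{``existence of a positive strict supersolution $\Longleftrightarrow$ positivity of the principal eigenvalue''} in the present generality (uniformly elliptic $\mf{L}_\k$ with low-regularity coefficients and a mixed boundary operator $\mf{B}_\k$ whose coefficient $\b_\k$ is allowed to change sign); this is exactly what is imported from \cite{CCLG} and \cite[Ch. 7]{LG13}. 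An alternative route, pairing $\varphi_1$ with a principal eigenfunction of the formal adjoint $(\mf{L}_\k^{\ast}+V_1,\mf{B}_\k^{\ast},\O)$ and integrating by parts to reduce the claim to $\int_\O\varphi_1^{\ast}(V_2-V_1)\varphi_2>0$, works equally well but relies on the (equally delicate) adjoint theory in the same setting.
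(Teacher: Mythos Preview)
Your argument is correct. Note, however, that the paper does not supply its own proof of this theorem: it is stated as a known result and attributed to Cano-Casanova and L\'opez-G\'omez \cite{CCLG} (see the sentence preceding the statement). So there is no ``paper's proof'' to compare against; what you have written is essentially the standard proof one finds in the cited references, and it relies --- as you correctly flag --- on the characterization Theorem~\ref{th2.2}, which the paper likewise imports from \cite{ALG-98,LGMM,LG13} rather than proving in situ. One small remark: your derivation of the non-strict monotonicity via $V_2+\tfrac1n$ is fine and does not secretly require continuity, since the shift identity gives $\s_0[\mf{L}_\k+V_2+\tfrac1n,\mf{B}_\k,\O]=\s_0[\mf{L}_\k+V_2,\mf{B}_\k,\O]+\tfrac1n$ directly; the Lipschitz bound then follows exactly as you wrote.
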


The next characterization theorem is \cite[Th. 7.10]{LG13}. It goes back to L\'{o}pez-G\'{o}mez and Molina-Meyer \cite{LGMM} for cooperative systems under Dirichlet boundary conditions, and to Amann and L\'{o}pez-G\'{o}mez \cite{ALG-98} in the present setting. The equivalence between (a) and (c) was established, simultaneously to \cite{LGMM},  for the single equation under Dirichlet boundary conditions   by Berestycki, Nirenberg and Varadhan \cite{BNV}. However, (b) is the most useful condition
from the point of the applications.
\par

\begin{theorem}
	\label{th2.2}
	For every $V\in L^\infty(\O)$ and $\k=1, 2$, the next conditions are equivalent:
	\begin{enumerate}
		\item[{\rm (a)}] $\sigma_0\left[\mf{L}_\k+V,\mf{B}_\k,\Omega\right]>0$.
		\item[{\rm (b)}] The tern $\left(\mf{L}_\k+V,\mf{B}_\k,\Omega\right)$ possesses a positive strict supersolution, $h \in \mathscr{W}_\k$, i.e., $h$ satisfies $h \gneq 0$ and 		
\begin{equation*}
			\left \{
			\begin{array}{ll}
				(\mf{L}_\k+V) h\geq 0&\quad\hbox{in}\;\;\Omega,\\
				\mf{B}_\k h\geq 0&\quad\hbox{on}\;\;\partial\Omega,
			\end{array}
			\right.
\end{equation*}
		with some of these inequalities strict.
\item[{\rm (c)}] The tern $(\mf{L}_\k+V,\mf{B}_\k,\Omega)$ satisfies the strong maximum principle, i.e.,  every function $z\in \mathscr{W}_\k$ such that
		\begin{equation*}
			\left \{
			\begin{array}{ll}
				(\mf{L}_\k+V) z\geq 0&\quad\hbox{in}\;\;\Omega,\\
				\mf{B}_\k z \geq 0&\quad\hbox{on}\;\;\partial\Omega, \end{array} \right.
		\end{equation*}
		with some of these inequalities strict, satisfies
\begin{equation*}
			z(x)>0 \;\;\hbox{for all}\;\; x\in\O\cup\G_1^\k \;\;\hbox{and}\;\; \frac{\p z}{\p n}(x)<0 \;\;\hbox{for all}\;\; x\in z^{-1}(0) \cap \G_0^\k.
		\end{equation*}
To shorten  notations, when this occurs, we will simply say that $z\gg_\k 0$.
\end{enumerate}
\end{theorem}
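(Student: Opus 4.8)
The plan is to establish the two equivalences (a)$\Leftrightarrow$(b) and (a)$\Leftrightarrow$(c) using, as a single workhorse, the classical strong maximum principle and the Hopf boundary point lemma for strong solutions, after reducing the zero order coefficient to a nonnegative one. Set $\s:=\s_0[\mf{L}_\k+V,\mf{B}_\k,\O]$ and let $\v_0\in\mathscr{W}_\k$, $\v_0\gg_\k0$, be an associated principal eigenfunction, $(\mf{L}_\k+V)\v_0=\s\v_0$ in $\O$, whose existence, strong positivity and algebraic simplicity were recorded in Section \ref{sec2}. The key observation is that if $w\in\mathscr{W}_\k$ satisfies $w\ge0$ and $(\mf{L}_\k+V)w\ge g$ in $\O$, then adding $(c_\k+V)^-w\ge0$ to both sides rewrites this as $-\div(A_\k\nabla w)+\langle b_\k,\nabla w\rangle+(c_\k+V)^+w\ge g+(c_\k+V)^-w$, an inequality governed by an elliptic operator with a \emph{nonnegative} zero order coefficient; hence, if moreover $g\ge0$ and $w\not\equiv0$, the classical theory (available for $W^{2,p}$-solutions, $p\ge N$, see Section \ref{sec2}) yields $w(x)>0$ on $\O\cup\G_1^\k$ and $\p w/\p n<0$ at the points of $w^{-1}(0)\cap\G_0^\k$; and if $w(x_0)=0$ at some $x_0\in\G_1^\k$, the Hopf lemma forces $\p_{\nu_\k}w(x_0)<0$, whence $\mf{B}_\k w(x_0)=\p_{\nu_\k}w(x_0)+\b_\k(x_0)w(x_0)<0$ (the sign of $\b_\k$ being immaterial, since $w(x_0)=0$), so that $w$ cannot vanish on $\G_1^\k$ when $\mf{B}_\k w\ge0$ there. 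In particular, any positive strict supersolution is automatically $\gg_\k0$.

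For (a)$\Rightarrow$(b) I would simply take $h:=\v_0$, which is a positive strict supersolution because $(\mf{L}_\k+V)\v_0=\s\v_0\gg0$ in $\O$ when $\s>0$. For (a)$\Rightarrow$(c), given $z$ as in the statement, I would first show $z\ge0$ by a sliding argument: set $t^*:=\inf\{t\ge0:\ z+t\v_0\ge0\ \text{in}\ \bar\O\}$, which is finite because $\v_0\gg_\k0$ and $\p\O\in\mc{C}^2$. If $t^*>0$, then $w:=z+t^*\v_0\ge0$ vanishes somewhere yet $(\mf{L}_\k+V)w=(\mf{L}_\k+V)z+t^*\s\v_0>0$ in $\O$, so $w\gg_\k0$ by the observation above (here $w\not\equiv0$, since otherwise $(\mf{L}_\k+V)z=-t^*\s\v_0<0$); but then $z+(t^*-\e)\v_0=w-\e\v_0\ge0$ for small $\e>0$ — because $w$ and $\v_0$ vanish to exactly first order along $\G_0^\k$ while $w$ is bounded away from $0$ on compact subsets of $\O\cup\G_1^\k$ — contradicting the choice of $t^*$. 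Hence $t^*=0$, i.e. $z\ge0$, and a final application of the observation to $z$ itself (with $g=(\mf{L}_\k+V)z\ge0$, $z\not\equiv0$ and some inequality strict) gives $z\gg_\k0$.

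For the converses I would argue by contradiction. If (c) holds but $\s\le0$, then $z:=-\v_0$ satisfies $(\mf{L}_\k+V)z=-\s\v_0\ge0$ and $\mf{B}_\k z=0$; when $\s<0$ this inequality is strict in $\O$ and (c) gives $-\v_0\gg_\k0$, absurd, while when $\s=0$ the nonstrict form of the maximum principle implicit in (c) already forces $-\v_0\ge0$, again absurd. If (b) holds but $\s\le0$, let $h$ be a positive strict supersolution, so $h\gg_\k0$, and set $t_*:=\sup\{t\ge0:\ h-t\v_0\ge0\ \text{in}\ \bar\O\}\in(0,\infty)$; then $w:=h-t_*\v_0\ge0$ vanishes somewhere and $(\mf{L}_\k+V)w=(\mf{L}_\k+V)h-t_*\s\v_0\ge0$ in $\O$, and since $h$ is a \emph{strict} supersolution with $\s\le0$, either $(\mf{L}_\k+V)w\gneq0$ in $\O$ or $(\mf{L}_\k+V)w\equiv0$ in $\O$ but $\mf{B}_\k w=\mf{B}_\k h\gneq0$ on $\p\O$; in both cases $w$ is itself a positive strict supersolution, hence $w\gg_\k0$, and $h-(t_*+\e)\v_0=w-\e\v_0\ge0$ for small $\e>0$, contradicting the choice of $t_*$. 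Thus $\s>0$ in either case, which closes the two equivalences.

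I expect the genuine difficulty to be entirely concealed in the ``key observation'': one must run the classical strong maximum principle and the Hopf boundary point lemma for strong ($W^{2,p}$, $p\ge N$) solutions under the \emph{general mixed} boundary operator $\mf{B}_\k$ of \eqref{1.3}, with $\b_\k$ of arbitrary sign, and correctly handle the borderline case $\s=0$, where the nonstrict form of the maximum principle (rather than its Hopf-strengthened version) is the one that is available. These are precisely the points settled in \cite{CCLG} and \cite[Ch.~7]{LG13}; granting them, the bookkeeping above is routine and no further a priori estimates are needed. The remaining implications not spelled out above, (b)$\Rightarrow$(c) and (c)$\Rightarrow$(b), then follow by composing the ones proved, or can be obtained directly by the very same sliding scheme.
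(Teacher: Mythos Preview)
The paper does not prove Theorem~\ref{th2.2}; it quotes it as \cite[Th.~7.10]{LG13}, with attributions to \cite{LGMM}, \cite{ALG-98} and \cite{BNV}. So there is no ``paper's own proof'' to compare against, and what you have produced is in fact a sketch of the argument those references carry out. Your sliding scheme (pushing $z$ by multiples of the principal eigenfunction $\v_0$ and invoking the interior strong maximum principle plus Hopf's lemma after absorbing the negative part of the zero-order coefficient) is exactly the standard route used there; your identification of the ``key observation'' as the only nontrivial ingredient is correct.

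There is, however, one genuine gap in your write-up: the case $\s=0$ in (c)$\Rightarrow$(a). Your candidate $z=-\v_0$ satisfies $(\mf{L}_\k+V)z=0$ and $\mf{B}_\k z=0$ with \emph{no} strict inequality, so the hypothesis of (c) does not apply to it, and there is no ``nonstrict form of the maximum principle implicit in (c)'': condition (c) is a statement only about strict supersolutions and says nothing about functions satisfying all inequalities with equality. Worse, as stated in the paper the test functions lie in $\mathscr{W}_\k$, which forces $\mf{B}_\k z=0$; an integration against the adjoint principal eigenfunction $\v_0^*\gg 0$ shows that when $\s=0$ there is \emph{no} $z\in\mathscr{W}_\k$ with $(\mf{L}_\k+V)z\gneq 0$, so (c) becomes vacuous. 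The way this borderline case is handled in \cite[Ch.~7]{LG13} is different: one either works with supersolutions in $\bigcap_{p\ge N}W^{2,p}(\O)$ (allowing $\mf{B}_\k z\gneq 0$, which restores nontrivial test functions at $\s=0$), or one proves (c)$\Rightarrow$(b) directly and then invokes your (b)$\Rightarrow$(a), which is unaffected by this issue. Apart from this point, your outline is sound.
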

\par

The next result goes back to Fraile et al. \cite[Th. 3.5]{FKLGM} for $\b_\k\geq 0$. In the general case when $\b_\k$ changes sign one can either use the change of variable of Fern\'{a}ndez-Rinc\'{o}n and L\'{o}pez-G\'{o}mez \cite[Sect. 3]{FRLG} to reduce the problem to the setting of \cite{FKLGM}, or one might derive it directly from Theorem 1.1 of Daners and L\'{o}pez-G\'{o}mez \cite{DLG}. Subsequently, we say that
$z_1\ll_\k z_2$ if $z_2-z_1\gg_\k 0$.

\begin{theorem}
\label{th2.3}
Suppose $\r \in\R$ and $\xi \in C(\bar \Omega;\R)$ satisfies $\xi(x)>0$ for all $x\in\bar\O$. Then, for every $\k=1, 2$ and $V\in L^\infty(\O)$, the semilinear boundary value problem
\begin{equation}
\label{ii.2}
\left\{
\begin{array}{ll}
(\mf{L}_\k +V) z=\r z - \xi(x) z^2 &\quad\hbox{in}\;\;\Omega,\\[5pt]
\mf{B}_\k z=0    &\quad\hbox{on}\;\;\partial\Omega,
\end{array}
\right.
\end{equation}
admits a positive solution if, and only if, $\r > \r_\k\equiv \sigma_0\left[\mf{L}_\k+V,\mf{B}_\k,\Omega\right]$. Moreover, it is unique if it exists, and, denoting it by $z_{\r,\k}\equiv \t_{[\mf{L}_\k+V,\r,\xi]}$,
we have that $w_{\r,\k}\gg_\k 0$ and
\begin{enumerate}
\item[{\rm (a)}] the map  $\r\mapsto w_{\r,\k}$ is point-wise increasing provided
$\r>\r_\k$,
\item[{\rm (b)}] $z_{\r,\k}$ bifurcates from $z=0$
at $\r =\r_\k$,
\item[{\rm (c)}] as a consequence of Theorem  \ref{th2.2}, if $\bar{u}$ (resp. $\underline{u}$) is a positive strict supersolution (resp. subsolution) of \eqref{ii.2}, then $\underline{u}\ll_{\k} w_{\r,\k}$ (resp. $ w_{\r,\k} \ll_{\k}\bar{u}$) provided $\r >\r_\k$.
\end{enumerate}
\end{theorem}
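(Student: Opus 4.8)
The plan is to split the statement into (i) the existence/non-existence dichotomy, (ii) uniqueness, and (iii) the three monotonicity/strong-positivity properties (a)–(c), and to handle each using only the eigenvalue machinery of Theorems \ref{th2.1}–\ref{th2.2} together with a sub/supersolution scheme. For the \emph{necessity} of $\r>\r_\k$: if $z\gg_\k 0$ solves \eqref{ii.2}, then $z$ itself satisfies $(\mf{L}_\k+V)z = (\r-\xi z)z$, so $z$ is a positive eigenfunction of $\mf{L}_\k+V-(\r-\xi z)$ with eigenvalue $0$; since the principal eigenvalue is the only one with a positive eigenfunction, $\s_0[\mf{L}_\k+V-\xi z,\mf{B}_\k,\O]=0$, and because $\xi z\gneq 0$, Theorem \ref{th2.1} forces $\r_\k=\s_0[\mf{L}_\k+V,\mf{B}_\k,\O]>\s_0[\mf{L}_\k+V-\xi z,\mf{B}_\k,\O]=0$, i.e.\ $\r>\r_\k$. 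Wait—more directly, subtracting shows $\s_0[\mf{L}_\k+V-\r,\mf{B}_\k,\O]=\s_0[\mf{L}_\k+V,\mf{B}_\k,\O]-\r<0$ by strict monotonicity, giving $\r>\r_\k$ at once.

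For \emph{sufficiency}, assume $\r>\r_\k$ and build an ordered pair of sub/supersolutions. A supersolution is $\bar u \equiv K$ for a large constant $K$ (using $\xi\geq \min_{\bar\O}\xi>0$, so $(\mf{L}_\k+V)K + \xi K^2 - \r K \geq 0$ once $K$ is large, modulo the zeroth-order and boundary terms, which one controls by first shifting $V$ by a constant if needed — the shift is harmless since it changes $\r_\k$ and $\r$ by the same amount). A subsolution is $\underline u \equiv \d\,\v_\k$, where $\v_\k\gg_\k 0$ is the principal eigenfunction of $(\mf{L}_\k+V,\mf{B}_\k,\O)$ and $\d>0$ is small: indeed $(\mf{L}_\k+V)(\d\v_\k) = \r_\k\,\d\v_\k \leq \r\,\d\v_\k - \xi\,\d^2\v_\k^2$ holds pointwise for $\d$ small because $\r-\r_\k>0$ and $\d\v_\k\to 0$ uniformly. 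After possibly shrinking $\d$ and enlarging $K$ so that $\d\v_\k\leq K$, the standard monotone iteration (or the abstract sub/supersolution theorem, plus the compact embeddings $\mathscr{W}_\k\hookrightarrow\mc{C}^1_{\mf{B}_\k}(\bar\O)$ recorded in Section \ref{sec2}) yields a solution $z_{\r,\k}$ with $\d\v_\k\leq z_{\r,\k}\leq K$; that $z_{\r,\k}\gg_\k 0$ follows from Theorem \ref{th2.2}(c) applied to $(\mf{L}_\k+V+\xi z_{\r,\k}-\r,\mf{B}_\k,\O)$, whose principal eigenvalue is positive since $\xi z_{\r,\k}\gneq 0$.

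\emph{Uniqueness and (a)–(c)} all follow from one sweeping-type comparison lemma: if $z_1$ is a positive subsolution and $z_2$ a positive supersolution of \eqref{ii.2} for the \emph{same} $\r>\r_\k$, then $z_1\leq z_2$ (and strictly, $z_1\ll_\k z_2$, unless $z_1\equiv z_2$). One proves this by considering $t^*:=\sup\{t>0: tz_1\leq z_2\}$ and showing $t^*\geq 1$: if $t^*<1$ then at $t=t^*$ the function $t^*z_1$ is a strict subsolution strictly below $z_2$ at some interior point, and applying the strong maximum principle of Theorem \ref{th2.2}(c) to $z_2-t^*z_1$ (using the strict inequality $\xi(t^*z_1)^2 < t^*\xi z_1^2$ coming from $0<t^*<1$ and $\xi>0$) one can push $t^*$ up, a contradiction. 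Uniqueness is the special case $z_1,z_2$ both solutions; (a) uses that a solution at $\r'$ is a strict subsolution at $\r>\r'$; (b) is the local bifurcation from the trivial solution, which is the Crandall–Rabinowitz bifurcation at the simple eigenvalue $\r_\k$ (algebraic simplicity is guaranteed by Section \ref{sec2}), combined with uniqueness and the a priori bound $z_{\r,\k}\leq (\r-\r_\k)/\min_{\bar\O}\xi$ forcing the branch to be global and to connect to $0$ only at $\r_\k$; and (c) is the comparison lemma verbatim. The main obstacle is technical rather than conceptual: getting the supersolution/subsolution construction to work when $\b_\k$ changes sign and when the zeroth-order coefficient $c_\k+V$ is not sign-definite, which is exactly where one invokes the reduction in \cite{FRLG} or the direct argument of \cite{DLG} cited just before the statement, so that constant super/subsolutions and the eigenfunction scaling remain legitimate.
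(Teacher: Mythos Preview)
The paper does not prove Theorem~\ref{th2.3}: it is stated as a preliminary result, attributed to Fraile et al.\ \cite[Th.~3.5]{FKLGM} for $\b_\k\geq 0$, with the sign-changing case deferred to the change of variable in \cite[Sect.~3]{FRLG} or to \cite[Th.~1.1]{DLG}. There is therefore no in-paper argument to compare against.

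Your sketch is the classical one underlying those references: the principal-eigenvalue identity $\s_0[\mf{L}_\k+V+\xi z-\r,\mf{B}_\k,\O]=0$ for necessity, a sub/supersolution pair $(\d\v_\k,K)$ for existence, and a sweeping argument exploiting the strict concavity of $z\mapsto\r z-\xi z^2$ for uniqueness and for (c). Two small corrections are worth recording. First, the constant supersolution $K$ fails on $\G_1^\k$ wherever $\b_\k<0$, since $\mf{B}_\k K=\b_\k K$ there; you acknowledge this and, exactly as the paper does, hand the difficulty off to \cite{FRLG} or \cite{DLG}, so the argument is not self-contained at that step. Second, the bound $z_{\r,\k}\leq(\r-\r_\k)/\min_{\bar\O}\xi$ you invoke for (b) is not right: the monotonicity $\r=\s_0[\mf{L}_\k+V+\xi z,\mf{B}_\k,\O]\geq\r_\k+\min_{\bar\O}(\xi z)$ only bounds $\min z$ from above, not $\max z$. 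The clean route to (b) is instead to combine (a) with elliptic compactness: as $\r\downarrow\r_\k$ the solutions $z_{\r,\k}$ decrease and stay uniformly bounded in $W^{2,p}$, hence converge to a nonnegative solution at $\r=\r_\k$, which by the necessity part must be $0$. With that adjustment the outline is correct.
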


More precisely, in this paper we denote by $\t_{[\mf{L}_\k +V,\r,\xi]}$
the maximal non-negative solution of \eqref{ii.2}. Then, due to Theorem \ref{th2.3},
\begin{equation*}
 \t_{[\mf{L}_\k +V,\r,\xi]}:= \left\{ \begin{array}{ll} 0 & \quad \hbox{if}\;\;
\r \leq \r_\k,\\ \gg_\k 0 & \quad \hbox{if}\;\;
\r > \r_\k.
\end{array}
\right.
\end{equation*}
Theorem \ref{th2.3} was generalized by Fraile et al. \cite{FKLGM} to cover the case when $\xi\gneq 0$
vanishes on some nice subdomain of $\Omega$, and by Daners and L\'opez-G\'omez \cite[Th. 1.1]{DLG}
to characterize the range of $\r$'s for which \eqref{ii.2} admits a positive solution under no requirements on the nature of $\xi^{-1}(0)$.

\begin{corollary}
	\label{co2.1}
	According to Theorem \ref{th2.3}, we can conclude that
\begin{enumerate}
\item[{\rm (a)}]
\eqref{1.1} has a semitrivial positive solution of the form $(u,0)$
	if, and only if, $\l > \s_{0,1}\equiv \s_0[\mf{L}_1,\mf{B}_1,\Omega]$, and, in such case,
$u=\t_{[\mf{L}_1,\l,a]}$.

\item[{\rm (b)}] Similarly, \eqref{1.1} has a semitrivial positive solution of the form $(0,v)$
	if, and only if, $\mu > \s_{0,2}\equiv \s_0[\mf{L}_2,\mf{B}_2,\Omega]$, and, in such case,  $v=\t_{[\mf{L}_2,\mu,d]}$.
\end{enumerate}
\end{corollary}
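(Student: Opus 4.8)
The plan is to observe that both assertions are immediate consequences of Theorem \ref{th2.3}, once one notices that prescribing either component of the system to vanish decouples \eqref{1.1} into a single scalar logistic boundary value problem for the remaining component. For part (a) I would substitute $v\equiv 0$ into \eqref{1.1}: the second equation becomes $\mf{L}_2 0 = 0$, which holds trivially, while in the first equation the coupling term $b(x)\frac{uv}{1+\g m(x)u}$ vanishes identically, so that $(u,0)$ solves \eqref{1.1} if, and only if, $u$ solves
\begin{equation*}
\left\{\begin{array}{ll}
\mf{L}_1 u = \l u - a(x) u^2 & \quad\hbox{in}\;\;\O,\\[3pt]
\mf{B}_1 u = 0 & \quad\hbox{on}\;\;\p\O.
\end{array}\right.
\end{equation*}
This is exactly problem \eqref{ii.2} with $\k=1$, $V\equiv 0$, $\r=\l$ and $\xi=a$. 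By \eqref{1.4}, $a\in\mc{C}(\bar\O;\R)$ satisfies $a(x)>0$ for all $x\in\bar\O$, so the hypotheses of Theorem \ref{th2.3} are fulfilled; hence the displayed problem admits a (necessarily unique, strongly positive) solution if, and only if, $\l>\r_1=\s_0[\mf{L}_1,\mf{B}_1,\O]=\s_{0,1}$, and in that case $u=\t_{[\mf{L}_1,\l,a]}$. Reading this equivalence in both directions gives precisely the statement of (a).

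Part (b) follows from the symmetric argument. Substituting $u\equiv 0$ into \eqref{1.1} makes the first equation trivial and reduces the second to $\mf{L}_2 v=\mu v-d(x)v^2$ under $\mf{B}_2 v=0$, which is \eqref{ii.2} with $\k=2$, $V\equiv 0$, $\r=\mu$ and $\xi=d$. Since $d\gg 0$ by \eqref{1.4}, Theorem \ref{th2.3} yields a unique positive solution $v=\t_{[\mf{L}_2,\mu,d]}$ exactly when $\mu>\s_{0,2}$, and no positive solution of that form otherwise.

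There is essentially no obstacle in this argument: the only thing to check is that the weight in each logistic equation ($a$ in (a), $d$ in (b)) is continuous and strictly positive on $\bar\O$, which is guaranteed by the standing hypothesis \eqref{1.4}, after which uniqueness and the identification with the maps $\t_{[\mf{L}_\k,\cdot,\cdot]}$ are inherited verbatim from Theorem \ref{th2.3}. The one point worth flagging in the write-up is that the conclusion is completely independent of $\g>0$, of $m$ and of $b$ (resp.\ of $c$), since the Holling--Tanner interaction term drops out the moment one of the two components is set to zero.
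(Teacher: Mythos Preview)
Your argument is correct and is precisely the reasoning the paper has in mind: the corollary is stated there as an immediate consequence of Theorem~\ref{th2.3}, with no further proof, and your write-up simply spells out the obvious reduction of \eqref{1.1} to the scalar logistic problem \eqref{ii.2} upon setting one component to zero.
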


\section{Bifurcation of coexistence states from $(0,\t_{[\mf{L}_2,\mu,d]})$}
\label{sec3}

In this section we analyze the bifurcation of coexistence states from the semitrivial curve $(0,\t_{[\mf{L}_2,\mu,d]})$ in the problem \eqref{1.7}. We are particularly interested in ascertaining
the nature of the local bifurcation according to the value of the parameter $\e>0$.  The linearized stability of $(0,\t_{[\mf{L}_2,\mu,d]})$ is determined by the signs of the real parts of the eigenvalues of the problem
\begin{equation}
\label{3.1}
\left\{
\begin{array}{ll}
\begin{pmatrix}
\mf{L}_1+b\t_{[\mf{L}_2,\mu,d]}-\l& 0 \\[1ex]
-\e c\t_{[\mf{L}_2,\mu,d]} & \mf{L}_2+2d\t_{[\mf{L}_2,\mu,d]}-\mu
\end{pmatrix}
\begin{pmatrix} w \\[1ex] v \end{pmatrix}
= \tau \begin{pmatrix} w \\[1ex] v \end{pmatrix}
&\quad \hbox{in}\;\O,\\[17pt]
\mf{B}_1 w=\mf{B}_2v=0&\quad \hbox{on}\;\partial\O.
\end{array}
\right.
\end{equation}
The next result holds.

\begin{theorem}
\label{th3.1}
Setting $\Phi(\mu)\equiv \s_0\left[ \mf{L}_1+b \t_{[\mf{L}_2,\mu,d]},\mf{B}_1,\O\right]$ for all $\mu > \s_0[\mf{L}_2,\mf{B}_2,\O]$, the semitrivial solution $(0,\t_{[\mf{L}_2,\mu,d]})$ is linearly unstable if, and only if,
$\l>\Phi(\mu)$, whereas it is linearly stable if, and only if, $\l<\Phi(\mu)$. Thus,
$\l=\Phi(\mu)$ is the curve of change of stability of $(0,\t_{[\mf{L}_2,\mu,d]})$.
\end{theorem}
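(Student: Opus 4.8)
The plan is to exploit the lower-triangular block structure of the linearized operator in \eqref{3.1}. Write $\t:=\t_{[\mf{L}_2,\mu,d]}$ for brevity. Since the $(1,2)$-entry of the operator matrix vanishes, the eigenvalue problem \eqref{3.1} decouples: if $w\equiv 0$ then $v$ is an eigenfunction of the scalar problem associated with the second diagonal block $(\mf{L}_2+2d\t-\mu,\mf{B}_2,\O)$, whereas if $w\neq 0$ then $w$ is an eigenfunction of the scalar problem associated with the first diagonal block $(\mf{L}_1+b\t-\l,\mf{B}_1,\O)$ with eigenvalue $\tau$, and $v$ is recovered by solving $(\mf{L}_2+2d\t-\mu-\tau)v=\e c\t w$, which is uniquely solvable whenever $\tau$ is not an eigenvalue of the second block (and if it is, $\tau$ is anyway an eigenvalue of \eqref{3.1} through a pair of the form $(0,v)$). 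Hence the set of eigenvalues of \eqref{3.1} equals the union of the spectra of the two diagonal blocks. Since each block is a single second order elliptic operator whose principal eigenvalue is real and strictly dominant (Corollary 7.1 and Theorem 7.9 of \cite{LG13} as recalled in Section \ref{sec2}), the eigenvalue of \eqref{3.1} with smallest real part is the real number
\[
\tau_0:=\min\left\{\s_0[\mf{L}_1+b\t-\l,\mf{B}_1,\O],\ \s_0[\mf{L}_2+2d\t-\mu,\mf{B}_2,\O]\right\},
\]
so that $(0,\t)$ is linearly stable when $\tau_0>0$ and linearly unstable when $\tau_0<0$; in particular $\tau_0$, and hence the stability of $(0,\t)$, is independent of $\e\geq 0$.

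Next I would evaluate the two quantities appearing in $\tau_0$. For the first, the principal eigenfunction of $\mf{L}_1+b\t$ is also a principal eigenfunction of $\mf{L}_1+b\t-\l$ with eigenvalue shifted by $-\l$, so $\s_0[\mf{L}_1+b\t-\l,\mf{B}_1,\O]=\s_0[\mf{L}_1+b\t,\mf{B}_1,\O]-\l=\Phi(\mu)-\l$. For the second, I use that $\t=\t_{[\mf{L}_2,\mu,d]}$ solves $\mf{L}_2\t=\mu\t-d\t^2$, i.e. $(\mf{L}_2+d\t-\mu)\t=0$; thus $\t$ is a strongly positive principal eigenfunction and $\s_0[\mf{L}_2+d\t-\mu,\mf{B}_2,\O]=0$. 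Since $d\gg 0$ and $\t\gg_2 0$ we have $2d\t-\mu\gneq d\t-\mu$, so the strict monotonicity of the principal eigenvalue with respect to the potential (Theorem \ref{th2.1}) gives
\[
\s_0[\mf{L}_2+2d\t-\mu,\mf{B}_2,\O]>\s_0[\mf{L}_2+d\t-\mu,\mf{B}_2,\O]=0.
\]
Consequently $\tau_0=\min\{\Phi(\mu)-\l,\ \s_0[\mf{L}_2+2d\t-\mu,\mf{B}_2,\O]\}$ with the second entry strictly positive.

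Finally I would draw the dichotomy: if $\l<\Phi(\mu)$ both entries of the minimum are positive, so $\tau_0>0$ and $(0,\t)$ is linearly stable; if $\l>\Phi(\mu)$ then $\Phi(\mu)-\l<0$, so $\tau_0<0$ and $(0,\t)$ is linearly unstable; and if $\l=\Phi(\mu)$ then $\tau_0=0$, so $\l=\Phi(\mu)$ is precisely the curve of change of stability. The only genuinely delicate point is the first paragraph: justifying that, for the non-self-adjoint block-triangular operator pencil \eqref{3.1}, the eigenvalue with smallest real part is real and governs linearized stability, and that the point spectrum decomposes exactly into the spectra of the two diagonal blocks (handling the possible Fredholm coincidence when an eigenvalue of the first block equals one of the second). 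I expect this to be routine once the strict dominance and algebraic simplicity of the scalar principal eigenvalues recalled in Section \ref{sec2} are invoked, but it is the step that should be written with care; the remainder reduces to Theorem \ref{th2.1} and the defining identity for $\t_{[\mf{L}_2,\mu,d]}$.
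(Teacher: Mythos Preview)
Your proof is correct and follows essentially the same approach as the paper: both exploit the lower-triangular structure of \eqref{3.1} to reduce to the two diagonal scalar blocks, use the logistic identity $(\mf{L}_2+d\t-\mu)\t=0$ together with Theorem~\ref{th2.1} to get $\s_0[\mf{L}_2+2d\t-\mu,\mf{B}_2,\O]>0$, and invoke the strict dominance of principal eigenvalues to conclude. The paper handles the two cases ($w=0$ versus $w\neq 0$) slightly more concretely---in particular, for $\l>\Phi(\mu)$ it explicitly constructs the eigenpair by inverting $\mf{L}_2+2d\t-\mu-\tau_0$ via Theorem~\ref{th2.2} (since $-\tau_0>0$ keeps the principal eigenvalue positive)---whereas you phrase the same content as ``the spectrum of the triangular system is the union of the diagonal spectra''; your caveat about the Fredholm coincidence covers exactly the point the paper treats by direct inversion.
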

\begin{proof}
We first determine the eigenvalues with associated eigenvectors $(w,v)$ such that $w=0$ and $v\neq 0$.
By \eqref{3.1}, these eigenvalues satisfy
\begin{equation}
\label{3.2}
		\left\{
		\begin{array}{ll}
			(\mf{L}_2+2d\t_{[\mf{L}_2,\mu,d]}-\mu)v=\tau v
			&\quad \hbox{in}\;\O,\\[1ex]
			\mf{B}_2v=0&\quad \hbox{on}\;\partial\O.
		\end{array}
		\right.
\end{equation}
By Theorem \ref{th2.1}, the definition of $\t_{[\mf{L}_2,\mu,d]}$, and the uniqueness of the principal eigenvalue,
\begin{equation}
\label{3.3}
		\s_0\left[ \mf{L}_2+2d \t_{[\mf{L}_2,\mu,d]}-\mu,\mf{B}_2,\O\right]>\s_0\left[ \mf{L}_2+d \t_{[\mf{L}_2,\mu,d]}-\mu,\mf{B}_2,\O\right]=0.
\end{equation}
Thus, by the dominance of the principal eigenvalue (see \cite[Th. 7.8]{LG13}),
$$
  \mathrm{Re\,}\tau \geq \s_0\left[ \mf{L}_2+2d \t_{[\mf{L}_2,\mu,d]}-\mu,\mf{B}_2,\O\right]>0
$$
for any eigenvalue, $\tau$, of \eqref{3.2}.
\par
Now, we will ascertain the real parts of the eigenvalues of \eqref{3.1} with associated eigenfunctions, $(w,v)$, such that $w\neq 0$. By \eqref{3.1}, they should satisfy
\begin{equation}
\label{3.4}
		\left\{
		\begin{array}{ll}
			(\mf{L}_1+b\t_{[\mf{L}_2,\mu,d]}-\l)w=\tau w
			&\quad \hbox{in}\;\O,\\[1ex]
			\mf{B}_1 w=0&\quad \hbox{on}\;\partial\O.
		\end{array}
		\right.
\end{equation}
These eigenvalues consist of the sequence
\begin{equation}
		\label{eig}
		\tau_j:=\s_j\left[ \mf{L}_1+b \t_{[\mf{L}_2,\mu,d]},\mf{B}_1,\O\right]-\l\qquad\hbox{for}\;\,j\geq0,
\end{equation}
where $\{\s_j\left[ \mf{L}_1+b \t_{[\mf{L}_2,\mu,d]},\mf{B}_1,\O\right]\}_{j\geq 0}$ is the sequence of eigenvalues of \eqref{ii.1} with $\k=1$ and $V=b \t_{[\mf{L}_2,\mu,d]}$. As the principal eigenvalue is dominant, it is apparent that
$$
  \mathrm{Re\,}\tau_j\geq \tau_0=\Phi(\mu)-\l \quad \hbox{for all}\;\; j\geq 0.
$$
Assume $\l<\Phi(\mu)$. Then, $\mathrm{Re\,}\tau_j >0$ for all $j\geq 0$. Thus, any eigenvalue of
\eqref{3.1} has a positive real part, i.e., $(0,\t_{[\mf{L}_2,\mu,d]})$ is linearly stable.
\par
Assume now $\l >\Phi(\mu)$. Then, $\tau_0<0$. Let $w\neq 0$ be a principal eigenfunction associated to
$\tau_0$. Then, the second equation of \eqref{3.1} becomes
\begin{equation}
\label{3.6}
	\left\{ \begin{array}{ll} (\mf{L}_2+2d\t_{[\mf{L}_2,\mu,d]}-\mu-\tau_0)v=\e c\t_{[\mf{L}_2,\mu,d]}w, & \quad \hbox{in}\;\;\O,\\[1ex] \mf{B}_2 v =0 &\quad \hbox{in}\;\;\p\O. \end{array}\right.
\end{equation}
Since $-\tau_0>0$, it follows from Theorem \ref{th2.1} and \eqref{3.3} that
$$
  \s_0[\mf{L}_2+2d\t_{[\mf{L}_2,\mu,d]}-\mu-\tau_0,\mf{B}_2,\O]>
  \s_0[\mf{L}_2+2d\t_{[\mf{L}_2,\mu,d]}-\mu,\mf{B}_2,\O]>0.
$$
Thus,  thanks to Theorem \ref{th2.2},
\[
	v=\left(\mf{L}_2+2d\t_{[\mf{L}_2,\mu,d]}-\mu-\tau_0\right)^{-1}(\e c\t_{[\mf{L}_2,\mu,d]}w)
\]
provides us with the unique solution of \eqref{3.6}. Therefore, $(w,v)$ is an eigenfunction of
\eqref{3.1} associated to $\tau_0<0$ and hence,  $(0,\t_{[\mf{L}_2,\mu,d]})$  is linearly
unstable.
\end{proof}
\par
\begin{remark}
\label{re3.1}
\rm According to the theorems of Lyapunov on linearized stability, it becomes apparent that
$(0,\t_{[\mf{L}_2,\mu,d]})$ is  exponentially asymptotically stable if $\l<\Phi(\mu)$, while it is
unstable if $\l>\Phi(\mu)$ (see, e.g., Henry \cite[Sec. 5.1]{Hen}).
\end{remark}

Subsequently, we set
\begin{equation}
\label{3.7}
  \s_{0,\k}\equiv \s_0[\mf{L}_\k,\mf{B}_\k,\O],\qquad \k=1, 2,
\end{equation}
and pick any real number, $e$, such that $e> \max\{-\s_{0,1},-\s_{0,2}\}$. Then, for every $\k=1, 2$,
$$
  \s_0[\mf{L}_\k+e,\mf{B}_\k,\O]=\s_{0,\k}+e>0
$$
and hence, by Theorem \ref{th2.2}, $(\mf{L}_\k+e,\mf{B}_\k,\O)$ is an invertible operator with
strongly positive inverse. Obviously, the solutions of the problem \eqref{1.7} are given by the zeroes
of the operator
$$
\mf{F}:\mathbb{R}\times \mathbb{R}\times\mathbb{R} \times \mc{C}^1_{\mf{B}_1}(\bar\O)\times \mc{C}^1_{\mf{B}_2}(\bar\O) \to \mathscr{W}_1\times \mathscr{W}_2,
$$
defined, for every $\l,\mu, \e \in\R$, $w\in \mc{C}^1_{\mf{B}_1}(\bar\O)$ and $v \in\mc{C}^1_{\mf{B}_2}(\bar\O)$, by
\begin{equation}
\label{3.8}
\mf{F}(\l,\mu,\e,w,v):=\left( \begin{array}{c}
		w- (\mf{L}_1+e)^{-1}\left[ (\l+e)w- \e a w^2 - b \frac{wv}{1+mw} \right] \\[7pt]
		v - (\mf{L}_2+e)^{-1} \left[ (\mu+e)v-dv^2+\e c \frac{wv}{1+mw} \right] \end{array} \right).
\end{equation}
The operator $\mf{F}$ is a compact perturbation of the identity map in $\mc{C}^1_{\mf{B}_1}(\bar\O)\times
\mc{C}^1_{\mf{B}_2}(\bar\O)$. Moreover, it is Fr\'{e}ch\`{e}t differentiable and, since  $D_{(w,v)}\mf{F}$ is a linear compact perturbation of the identity map, $D_{(w,v)}\mf{F}$ is a Fredholm operator of index zero. Actually, $\mf{F}$ is real analytic in an open region containing the first quadrant $w\geq 0$, $v\geq 0$.
\par
The next result shows that the coexistence states bifurcate from the
semitrivial positive solution $(0,\t_{[\mf{L}_2,\mu,d]})$ along the curve $\l=\Phi(\mu)$. It is a
direct consequence of the theorem of bifurcation from simple eigenvalues of Crandall and Rabinowitz \cite{Rab-71b}. It provides us with the local structure of the set of bifurcating
coexistence states.

\begin{theorem}
\label{th3.2}
For every $\mu>\s_{0,2}$ and $\e\in\R$, there exist $\d=\d(\mu,\e)>0$ and an analytic map
$(\l,w,v):(-\d,\d)\to \R \times \mathscr{W}_1\times \mathscr{W}_2$ such that:
\begin{enumerate}
		\item[{\rm (i)}] $(\l(0),w(0),v(0))=\left(\Phi(\mu),0,\t_{[\mf{L}_2,\mu,d]}\right)$.
		\item[{\rm (ii)}] $\mf{F}(\l(s),\mu,\e,w(s),v(s))=0$ for all $s\in (-\d,\d)$.
		\item[{\rm (iii)}] $v(s)\gg_{2} 0$ if $s\in (-\d,\d)$, $w(s)\gg_1 0$ if $s\in (0,\d)$, and $w(s)\ll_1 0$ if $s\in (-\d,0)$.
		\item[{\rm (iv)}] The set of solutions of \eqref{1.7} in a neighborhood of
$(\l,w,v)=\left(\Phi(\mu),0,\t_{[\mf{L}_2,\mu,d]}\right)$
consists of  the curves $\left(\l,0,\t_{[\mf{L}_2,\mu,d]}\right)$, $\l \thicksim \Phi(\mu)$, and $(\l(s),w(s),v(s))$, $s \in (-\d,\d)$.
\end{enumerate}
Moreover, there are two functions $w_1, w_1^* \gg_1 0$ such that
\begin{equation}
\label{3.9}
	\l'(\e)\!\equiv\!
	\frac{\p\l}{\p s}(0,\e)\!=\!\!\! \int_\O\!\! \left( \e a\!-\! b\t_{[\mf{L}_2,\mu,d]}\right) w_1^2 w_1^*\! +\!\!\!	
		\int_{\Omega}\!\! b  \left(\mathfrak{L}_2\!+\! 2d\t_{[\mf{L}_2,\mu,d]}\!-\!\mu\right)^{-1}\!\!
		\left(\e c\t_{[\mf{L}_2,\mu,d]}w_1\right) w_1w_1^*.
\end{equation}
\end{theorem}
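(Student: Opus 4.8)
We plan to derive the statement from the theorem on bifurcation from a simple eigenvalue of Crandall and Rabinowitz \cite{Rab-71b}, applied to $\mf{F}(\l,\mu,\e,w,v)=0$ with $\mu$ and $\e$ frozen and $\l$ as bifurcation parameter. First we translate the semitrivial branch to the origin by setting $\hat v:=v-\t_{[\mf{L}_2,\mu,d]}$ and working with $\hat{\mf{F}}(\l,w,\hat v):=\mf{F}(\l,\mu,\e,w,\t_{[\mf{L}_2,\mu,d]}+\hat v)$; since $(\mf{L}_2+e)^{-1}\big[(\mu+e)\t_{[\mf{L}_2,\mu,d]}-d\,\t_{[\mf{L}_2,\mu,d]}^2\big]=\t_{[\mf{L}_2,\mu,d]}$, the point $(\l,0,0)$ is a zero of $\hat{\mf{F}}$ for every $\l$, and $\hat{\mf{F}}$ inherits from $\mf{F}$ the property of being a compact perturbation of the identity which is real analytic near the first quadrant. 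The candidate bifurcation value is $\l=\Phi(\mu)$.

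Next we compute $L_0:=D_{(w,\hat v)}\hat{\mf{F}}(\Phi(\mu),0,0)$. Since the $w$-derivative of $\frac{wv}{1+mw}$ at $w=0$ equals $v$, while its $v$-derivative there vanishes, $L_0$ is block lower-triangular, with diagonal blocks $A:=I-(\mf{L}_1+e)^{-1}\big[(\Phi(\mu)+e)-b\,\t_{[\mf{L}_2,\mu,d]}\big]$ and $C:=I-(\mf{L}_2+e)^{-1}\big[(\mu+e)-2d\,\t_{[\mf{L}_2,\mu,d]}\big]$ and lower-left block $B:=-(\mf{L}_2+e)^{-1}\big[\e c\,\t_{[\mf{L}_2,\mu,d]}\big]$. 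By \eqref{3.3} and Theorems \ref{th2.1} and \ref{th2.2}, $\s_0[\mf{L}_2+2d\,\t_{[\mf{L}_2,\mu,d]}-\mu,\mf{B}_2,\O]>0$, so $C$ is an isomorphism; and $A$ is the identity minus a compact operator, hence Fredholm of index zero, with $\dim N(A)=1$ since $\Phi(\mu)$ is the principal eigenvalue of $(\mf{L}_1+b\,\t_{[\mf{L}_2,\mu,d]},\mf{B}_1,\O)$ and hence geometrically (indeed algebraically) simple, so $N(A)=\mathrm{span}\{w_1\}$ with $w_1\gg_1 0$. The triangular structure then gives $N(L_0)=\mathrm{span}\{(w_1,\psi_1)\}$, one-dimensional, with $\psi_1:=(\mf{L}_2+2d\,\t_{[\mf{L}_2,\mu,d]}-\mu)^{-1}(\e c\,\t_{[\mf{L}_2,\mu,d]}w_1)$ obtained from $Bw_1+C\psi_1=0$, and $R(L_0)=R(A)\times\mc{C}^1_{\mf{B}_2}(\bar\O)$, of codimension one.

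For transversality one checks $D_\l D_{(w,\hat v)}\hat{\mf{F}}(\Phi(\mu),0,0)\,(w_1,\psi_1)\notin R(L_0)$. Since $D_\l\hat{\mf{F}}(\l,w,\hat v)=(-(\mf{L}_1+e)^{-1}w,0)$, this vector equals $(-(\mf{L}_1+e)^{-1}w_1,0)$, so the condition reduces to $-(\mf{L}_1+e)^{-1}w_1\notin R(A)$, i.e., by the Fredholm alternative for $\mf{L}_1+b\,\t_{[\mf{L}_2,\mu,d]}-\Phi(\mu)$, to $\int_\O w_1 w_1^*\neq 0$, where $w_1^*\gg_1 0$ is the principal eigenfunction of the adjoint operator $\mf{L}_1^*+b\,\t_{[\mf{L}_2,\mu,d]}-\Phi(\mu)$; this holds because both $w_1$ and $w_1^*$ are strongly positive. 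The Crandall--Rabinowitz theorem then provides an analytic germ $(\l(s),w(s),v(s))$, $s\in(-\d,\d)$, with $(\l(0),w(0),v(0))=(\Phi(\mu),0,\t_{[\mf{L}_2,\mu,d]})$, the local uniqueness assertion (iv), and $w(s)=s(w_1+o(1))$, $v(s)=\t_{[\mf{L}_2,\mu,d]}+s(\psi_1+o(1))$ in $\mc{C}^1_{\mf{B}_1}(\bar\O)\times\mc{C}^1_{\mf{B}_2}(\bar\O)$ as $s\to 0$; analyticity of the germ follows from that of $\mf{F}$. Finally, since $w_1$ lies in the interior of the positive cone of $\mc{C}^1_{\mf{B}_1}(\bar\O)$ and $\t_{[\mf{L}_2,\mu,d]}\gg_2 0$, for $|s|$ small we obtain $v(s)\gg_2 0$, $w(s)\gg_1 0$ when $s\in(0,\d)$, and $w(s)\ll_1 0$ when $s\in(-\d,0)$, which is (iii).

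To obtain \eqref{3.9} we substitute $\l(s)=\Phi(\mu)+s\,\l'(\e)+O(s^2)$, $w(s)=sw_1+O(s^2)$ and $v(s)=\t_{[\mf{L}_2,\mu,d]}+s\psi_1+O(s^2)$ into the first equation of \eqref{1.7}, expand $\frac{wv}{1+mw}$ to second order about $(w,v)=(0,\t_{[\mf{L}_2,\mu,d]})$, and collect the terms of order $s^2$; testing the resulting identity against $w_1^*$ kills the $(\mf{L}_1+b\,\t_{[\mf{L}_2,\mu,d]}-\Phi(\mu))$-contribution and leaves a scalar linear equation for $\l'(\e)$ whose solution is \eqref{3.9} (after normalizing so that $\int_\O w_1 w_1^*=1$, the coupling integral recording $v'(0)=\psi_1$). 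We expect this last computation to be the main obstacle, since it demands careful bookkeeping of the second-order Taylor coefficients of the Holling--Tanner term $\frac{wv}{1+mw}$; the uniform-in-$\e$ behaviour emphasized in Section \ref{sec3} is more delicate still, but for the present statement it causes no difficulty because $\e$ appears affinely in $\hat{\mf{F}}$ and $L_0$ remains a compact perturbation of the identity with a one-dimensional kernel for every $\e$.
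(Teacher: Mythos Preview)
Your proposal is correct and follows essentially the same route as the paper. Both arguments verify the hypotheses of the Crandall--Rabinowitz theorem at $\l=\Phi(\mu)$ by exploiting the lower-triangular structure of the linearization (one-dimensional kernel spanned by $(w_1,\psi_1)$ with your $\psi_1$ equal to the paper's $v_1$), check transversality via the Fredholm alternative for $\mf{L}_1+b\,\t_{[\mf{L}_2,\mu,d]}-\Phi(\mu)$ (you phrase it as $\int_\O w_1 w_1^*\neq 0$, the paper argues by contradiction that $(\mf{L}_1+b\,\t_{[\mf{L}_2,\mu,d]}-\Phi(\mu))w=-w_1$ is unsolvable), and obtain \eqref{3.9} by inserting the power-series expansion into the $w$-equation and testing the $s^2$-identity against the adjoint eigenfunction $w_1^*$. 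The only cosmetic differences are that you translate the semitrivial branch to the origin first, and that you normalize by $\int_\O w_1 w_1^*=1$ while the paper states $\int_\O w_1^2=1$; in fact the condition $\int_\O w_1 w_1^*=1$ is the one actually needed for \eqref{3.9} to hold as written, so your formulation is slightly cleaner.
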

\begin{proof}
By definition, $\mf{F}(\l,\mu,\e,0,\t_{[\mf{L}_2,\mu,d]})=0$. Moreover, the Fr\'{e}ch\`et differential
\[
	\mathscr{L}(\l,\e):= D_{(w,v)}\mf{F}(\l,\mu,\e,0,\t_{[\mf{L}_2,\mu,d]})
\]
is the operator defined by
\[
	\mathscr{L}(\l,\e)(w,v)= \left( \begin{array}{c}
		w-(\mf{L}_1+e)^{-1}\left[(\l+e)w-b \t_{[\mf{L}_2,\mu,d]} w \right] \\[7pt]   v-(\mf{L}_2+e)^{-1}\left[(\mu+e)v -2d \t_{[\mf{L}_2,\mu,d]}v +\e c \t_{[\mf{L}_2,\mu,d]} w \right]   \end{array} \right).
\]
Thus, at $\l=\Phi(\mu)$ we have that $(w_1,v_1)\in N[\mathscr{L}(\Phi(\mu),\e)]$ if, and only if,
\begin{equation}
\label{3.10}
		\left\{
		\begin{array}{ll}
			\left( \mathfrak{L}_1+b  \t_{[\mf{L}_2,\mu,d]}\right)w_1=\Phi(\mu) w_1,
			\\[7pt] \left(\mathfrak{L}_2+2d\t_{[\mf{L}_2,\mu,d]} -\mu \right) v_1 = \e c \t_{[\mf{L}_2,\mu,d]} w_1, \end{array}\right.
\end{equation}
in $\O$ and $\mf{B}_1w_1=\mf{B}_2 v_1=0$ in $\p\O$.
Since $\Phi(\mu)\equiv \s_0\left[ \mf{L}_1+b \t_{[\mf{L}_2,\mu,d]},\mf{B}_1,\O\right]$, by the simplicity
of $\Phi(\mu)$, $w_1$ is unique, up to multiplicative constants. Actually, it can be chosen
to satisfy $w_1\gg_1 0$. Moreover, 	by \eqref{3.3} and Theorem \ref{th2.2}, the second equation of \eqref{3.10} implies that
\begin{equation}
\label{3.11}
		v_1=\left(\mathfrak{L}_2+2d\t_{[\mf{L}_2,\mu,d]} -\mu \right)^{-1} \left(\e c \t_{[\mf{L}_2,\mu,d]} w_1\right).
\end{equation}
Note that $\mathrm{sign\,} v_1 =\mathrm{sign\,} \e$, by Theorem \ref{th2.2}. Therefore,
\[
	N[\mathscr{L}(\Phi(\mu),\e)]=\mathrm{span}[\v_0],\qquad \v_0\equiv (w_1,v_1),\quad w_1\gg_1 0.
\]
Subsequently, we normalize $w_1\gg_1 0$ so that $\int_\O w_1^2(x)\,dx =1$, and denote by
$D_\l \mathscr{L}(\l,\e)$ the derivative of $\mathscr{L}(\l,\e)$ with respect to $\l$. Then,
\begin{equation}
\label{3.12}
	D_\l \mathscr{L}(\Phi(\mu),\e)\v_0 = \left( \begin{array}{c} -(\mf{L}_1+e)^{-1} w_1  \\[7pt] 0 \end{array}\right)\notin 	R[\mathscr{L}(\Phi(\mu),\e)],
\end{equation}
i.e., the transversality condition of Crandall and Rabinowitz \cite{Rab-71b} holds. Indeed,
arguing by contradiction, assume that there exists $(w,v)$ such that
\[
	\mathscr{L}(\Phi(\mu),\e)(w,v)= \left( \begin{array}{c} -(\mf{L}_1+e)^{-1} w_1 \\[7pt] 0 \end{array}\right).
\]
Then, we find from the first equation of this system that
$$
	\left\{ \begin{array}{ll} \left[ \mf{L}_1+ b \t_{[\mf{L}_2,\mu,d]}  -\Phi(\mu)\right]w = -w_1& \quad
\hbox{in}\;\; \O,\\[1ex] \mf{B}_1 w =0 &\quad \hbox{on}\;\;\p\O. \end{array}\right.
$$
By Corollary 7.1(f) of \cite{LG13} this is impossible. This contradiction shows \eqref{3.12}. Consequently,
the first four assertions of the theorem follow from the main theorem of \cite{Rab-71b}. To complete the proof it remains to show \eqref{3.9}.  Setting
\begin{equation*}
(\l(s),w(s),v(s))= \left(\Phi(\mu) +\sum_{j=1}^{\infty}s^j\l_j,
\sum_{j=1}^{\infty}s^jw_j,\t_{[\mf{L}_2,\mu,d]} + \sum_{j=1}^{\infty}s^jv_j\right),\quad s\thicksim 0,
\end{equation*}
and substituting into \eqref{1.7}, it becomes apparent that
\begin{equation}
\label{3.13}
		\left\{ \begin{array}{ll} \left( \mathfrak{L}_1+b\t_{[\mf{L}_2,\mu,d]}-\Phi(\mu)\right) w_2
		=\left(\l_1+b\t_{[\mf{L}_2,\mu,d]}w_1-\e a w_1 -bv_1 \right)w_1 & \quad \hbox{in}\;\;\O,\\[1ex]
  \mf{B}_1 w_2=0 & \quad \hbox{on}\;\; \p\O. \end{array}\right.
\end{equation}
Thanks to Corollary 7.1(e) of \cite{LG13}, it is easily seen that the $L^2(\O)$-orthogonal
to the kernel of the adjoint problem of  $\left( \mathfrak{L}_1+b\t_{[\mf{L}_2,\mu,d]}-\Phi(\mu),\mf{B}_1,\O\right)$ is generated by some
$w_1^*\gg_1 0$. Therefore, multiplying by $w_1^*$ the problem \eqref{3.13} and integrating in $\O$,
it follows from \eqref{3.11} that \eqref{3.9} holds. This ends the proof.
\end{proof}
\par

\begin{remark}
\label{re3.2}
\rm As the dependence of $\mf{F}$ on $\e\in\R$ is also analytic, by the implicit function theorem
used in the proof of the theorem of Crandall and Rabinowitz \cite{Rab-71b}, it becomes apparent that the bifurcated curve
$$
  (\l(s),w(s),v(s))\equiv (\l(s,\e),w(s,\e),v(s,\e))
$$
also is analytic with respect to
the parameter $\e$, though in Theorem \ref{th3.2} we have refrained to emphasize this dependence on the parameter $\e$ to simplify the notations as much as possible.
\end{remark}

As a further application of the exchange stability principle of Crandall and Rabinowitz \cite[Th. 1.16]{CR73}, the next result holds.

\begin{theorem}
\label{th3.3}
	The curve of coexistence states of \eqref{1.7} emanating from $\left(\Phi(\mu),0,\t_{[\mf{L}_2,\mu,d]}\right)$, denoted in Theorem \ref{th3.2} by $(\l(s),\mu,w(s),v(s))$ for sufficiently small  $s > 0$,  is unstable, with one-dimensional unstable manifold,  if $\l'(0)<0$,
and  exponentially stable if $\l'(0)>0$.
\end{theorem}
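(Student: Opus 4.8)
The plan is to derive this from the exchange-of-stability principle of Crandall and Rabinowitz \cite[Th. 1.16]{CR73}, applied to the local bifurcation analyzed in Theorem \ref{th3.2}. First I would recast that bifurcation as a bifurcation from the trivial solution: after the translation $\tilde v:=v-\t_{[\mf{L}_2,\mu,d]}$ the semitrivial curve becomes the branch $(w,\tilde v)\equiv(0,0)$, a solution of \eqref{1.7} for every $\l$, and Theorem \ref{th3.2} provides bifurcation at $\l=\Phi(\mu)$ from the one-dimensional null space $N[\mathscr{L}(\Phi(\mu),\e)]=\mathrm{span}[\v_0]$, $\v_0=(w_1,v_1)$ with $w_1\gg_1 0$, together with the transversality condition \eqref{3.12}. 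As these are precisely the hypotheses needed, \cite[Th. 1.16]{CR73} applies.

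Next I would isolate the two facts that fix the sign of the conclusion. On the one hand, the proof of Theorem \ref{th3.1} shows that the spectrum of the linearization in \eqref{3.1} at $(0,\t_{[\mf{L}_2,\mu,d]})$ is the union of the spectrum of the block $\mf{L}_2+2d\t_{[\mf{L}_2,\mu,d]}-\mu$, contained by \eqref{3.3} in the open right half-plane, with the sequence $\tau_j=\s_j[\mf{L}_1+b\t_{[\mf{L}_2,\mu,d]},\mf{B}_1,\O]-\l$, $j\geq0$; near $\l=\Phi(\mu)$ the unique eigenvalue able to reach $0$ is the principal one, $\tau_0(\l)=\Phi(\mu)-\l$, so that $\tau_0(\Phi(\mu))=0$ and $\tau_0'(\Phi(\mu))=-1<0$ --- equivalently, $(0,\t_{[\mf{L}_2,\mu,d]})$ is stable for $\l<\Phi(\mu)$ and loses stability as $\l$ increases across $\Phi(\mu)$. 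On the other hand, because the principal eigenvalue is algebraically simple (Section \ref{sec2}) and the $v$-block is non-singular there, $0$ is an algebraically simple and isolated eigenvalue of \eqref{3.1} at $\l=\Phi(\mu)$, the remaining spectrum lying in $\{\mathrm{Re}\,\tau\geq\d_0\}$ for some $\d_0>0$. Hence, by the analytic dependence of the linearization, and so of its spectrum, on $s$, along the bifurcated curve $(\l(s),w(s),v(s))$ of Theorem \ref{th3.2} the linearization of \eqref{1.7} at $(w(s),v(s))$ has, for $s$ small, a unique eigenvalue $\tau_*(s)$ near $0$, while all the remaining spectrum stays in $\{\mathrm{Re}\,\tau>\d\}$ for some $\d>0$.

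Then \cite[Th. 1.16]{CR73} yields $\tau_*(0)=0$ and
\[
   \lim_{s\to 0}\frac{-\,s\,\l'(s)\,\tau_0'(\Phi(\mu))}{\tau_*(s)}=1 .
\]
Since $\tau_0'(\Phi(\mu))=-1$, this gives $\mathrm{sign}\,\tau_*(s)=\mathrm{sign}\bigl(s\,\l'(s)\bigr)=\mathrm{sign}\,\l'(0)$ for small $s>0$. In the convention of Theorem \ref{th3.1} and Remark \ref{re3.1}, a coexistence state is linearly stable exactly when the whole spectrum of its linearization lies in $\{\mathrm{Re}\,\tau>0\}$; therefore, for small $s>0$: if $\l'(0)>0$, then $\tau_*(s)>0$ and, the remaining spectrum being in $\{\mathrm{Re}\,\tau>0\}$, the bifurcating coexistence state is exponentially asymptotically stable; if $\l'(0)<0$, then $\tau_*(s)<0$ is the only eigenvalue with non-positive real part, so the state is unstable with a one-dimensional unstable manifold (e.g., by Henry \cite{Hen}). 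The degenerate case $\l'(0)=0$ falls outside the statement.

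The step demanding some care --- and the one I expect to be the main obstacle --- is the passage between the fixed-point formulation through $\mf{F}$ of \eqref{3.8}, in which Theorem \ref{th3.2} and \cite[Th. 1.16]{CR73} are naturally phrased, and the differential generator in \eqref{3.1} governing linearized stability. This is reconciled by noting that that generator equals $(\mf{L}+e)\circ D_{(w,v)}\mf{F}$: testing the resulting generalized eigenvalue identity against $w_1$ and its adjoint principal eigenfunction $w_1^*\gg_1 0$ yields the strictly positive weight $\int_\O\bigl((\mf{L}_1+e)^{-1}w_1\bigr)w_1^*>0$, by the strong positivity of $(\mf{L}_\k+e)^{-1}$, so that the small eigenvalues of the two operators carry the same sign and $\tau_*(s)$ above is unambiguous.
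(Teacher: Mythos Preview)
Your argument is correct and follows essentially the same route as the paper: both proofs rest on the exchange-of-stability principle of Crandall and Rabinowitz \cite[Th.~1.16]{CR73}, after observing from the spectral analysis in Theorem \ref{th3.1} that the principal eigenvalue $\tau_0(\l)=\Phi(\mu)-\l$ is the only one crossing zero at $\l=\Phi(\mu)$. The only packaging difference is that the paper invokes the Morse-index version of the principle as formulated in \cite[Sec.~2.4]{LG01}, obtaining $\mf{m}[D_{(w,v)}\mf{F}(\l(s),\mu,\e,w(s),v(s))]=\mf{m}[\mathscr{L}(\Phi(\mu),\e)]+1$ or $\mf{m}[\mathscr{L}(\Phi(\mu),\e)]$ according to the sign of $\l'(0)$, whereas you use the asymptotic eigenvalue formula directly; your final paragraph reconciling the fixed-point operator $\mf{F}$ with the differential generator of \eqref{3.1} makes explicit a point the paper leaves implicit.
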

\begin{proof}
According to \eqref{eig}, it is apparent that
\begin{equation}
\label{3.14}
	\left\{
	\begin{array}{ll}
		\tau_0>0\;\;\hbox{and}\;\;\tau_j>0\;\;\hbox{for all}\; j\geq1\;\; \hbox{if}\;\,\l<\Phi(\mu),\\
		\tau_0=0\;\;\hbox{and}\;\;\tau_j>0\;\;\hbox{for all}\; j\geq1\;\;\hbox{if}\;\,\l=\Phi(\mu),\\
		\tau_0<0\;\;\hbox{and}\;\;\tau_j>0\;\;\hbox{for all}\; j\geq1\;\;\hbox{if}\;\,\l\gtrsim\Phi(\mu).
	\end{array}
	\right.
\end{equation}
Thus, by the exchange stability principle, \cite[Th. 1.16]{CR73}, $(\l(s),\mu,w(s),v(s))$ is linearly unstable (resp. stable) for sufficiently small $s>0$ if $\l'(0)<0$ (resp. $\l'(0)>0$). Moreover, maintaining the notations of the proof of Theorem \ref{th3.2}, it follows from \cite[Sec. 2.4]{LG01} that, for sufficiently small $s>0$,
\begin{equation*}
		\mf{m}\left[D_{(w,v)}\mf{F}(\l(s),\mu,\e,w(s),v(s))\right]=
		\left\{
		\begin{array}{ll}
			\mf{m}\left[\mathscr{L}(\Phi(\mu),\e)\right]+1&\quad\hbox{if}\;\,\l'(0)<0,\\[5pt]
			\mf{m}\left[\mathscr{L}(\Phi(\mu),\e)\right]&\quad\hbox{if}\;\,\l'(0)>0,
		\end{array}
		\right.
\end{equation*}
where $\mf{m}(L)$ stands for the sum of  the algebraic multiplicities of the real negative eigenvalues of $L$. 	 By \eqref{3.14}, $\mf{m}\left[\mathscr{L}(\Phi(\mu),\e)\right]=0$. Therefore,
\begin{equation*}
		\mf{m}\left[D_{(w,v)}\mf{F}(\l(s),\mu,\e,w(s),v(s))\right]=
		\left\{
		\begin{array}{ll}
			1\quad\hbox{if}\;\,\l'(0)<0,\\[5pt]
			0\quad\hbox{if}\;\,\l'(0)>0.	\end{array} \right.
\end{equation*}
The principle of linearized stability of Lyapunov ends the proof.
\end{proof}

\noindent Figure \ref{Fig1} sketches the corresponding local bifurcation diagrams in the transcritical case when $\l'(0)\neq 0$, according to the sign of $\l'(0)$. The arcs of analytic curve filled in by exponentially asymptotically stable solutions have been plotted using continuous lines, whereas unstable solutions with one-dimensional unstable manifold are plotted using dashed lines. The $\l$-axis stands for the constant $\l$-curve $(\l,\mu,0,\t_{[\mf{L}_2,\mu,d]})$. Thanks to Theorem \ref{th3.1}, this solution  is linearly unstable if $\l>\Phi(\mu)$ and linearly stable if $\l<\Phi(\mu)$.
\begin{figure}[h!]
\centering
\includegraphics[scale=0.7]{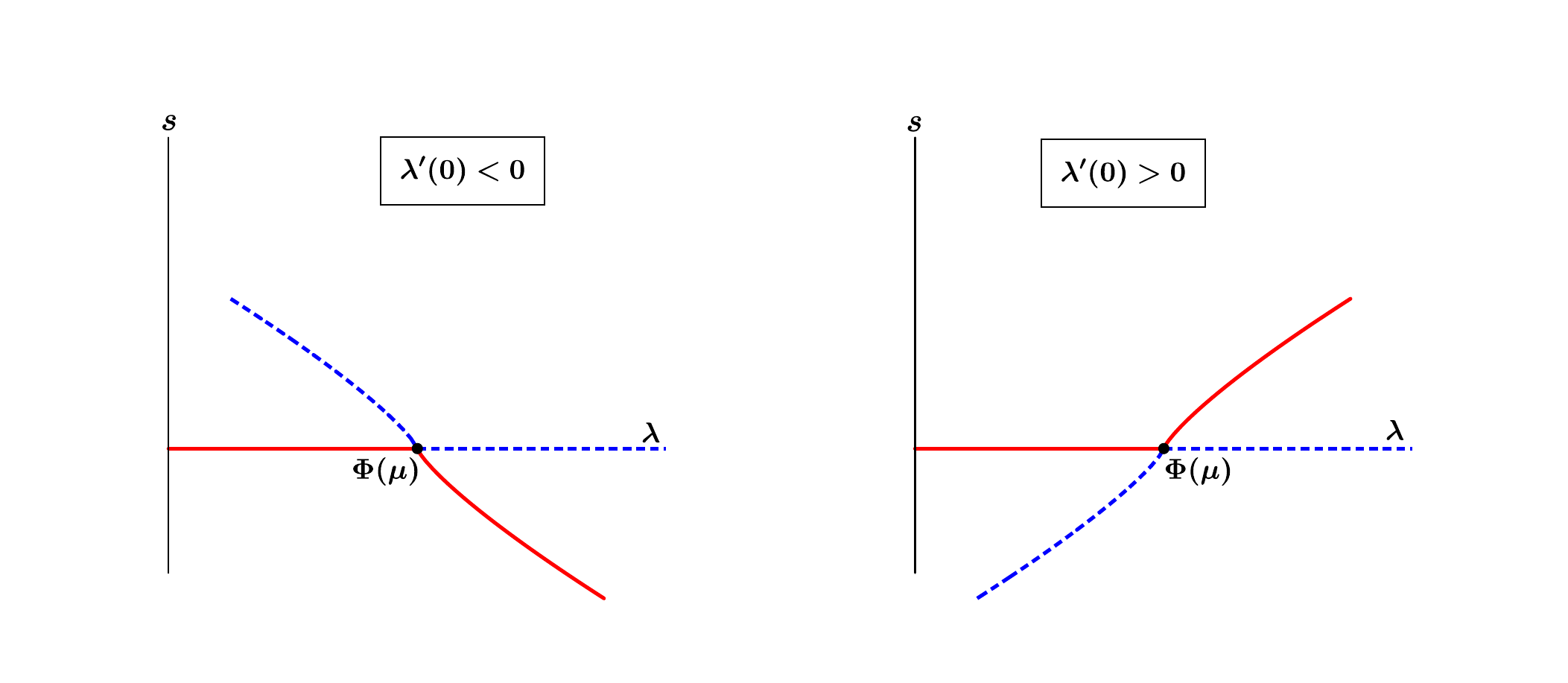}
\caption{Stability of the solutions filling in the bifurcating branches}
\label{Fig1}
\end{figure}

\par
We end this section applying Theorems 4.1 and 5.1 of \cite{LGMH20} to \eqref{1.7}, with $\e>0$. As a direct consequence, the next result holds.

\begin{theorem}
\label{th3.4}
Suppose  \eqref{1.7}, with $\e>0$,  has a coexistence state, $(w,v)$. Then,
\begin{equation}
\label{3.15}
\begin{split}
\l & >\v_\e(\mu)\equiv \s_0\Big[ \mf{L}_1+b \tfrac{\t_{[\mf{L}_2,\mu,d]}}{1+m\t_{[\mf{L}_1,\l,\e a]}},\mf{B}_1,\O\Big],\\[1ex]
\mu & >\Psi_\e(\l)\equiv \s_0\Big[ \mf{L}_2-\e c \tfrac{\t_{[\mf{L}_1,\l,\e a]}}{1+m\t_{[\mf{L}_1,\l,\e a]}},\mf{B}_2,\O\Big].
\end{split}
\end{equation}
Conversely, under the following condition
\begin{equation}
\label{3.16}
\l>\Phi(\mu)\equiv \s_0\left[ \mf{L}_1+b\t_{[\mf{L}_2,\mu,d]},\mf{B}_1,\O\right]\quad\hbox{and}\quad
\mu>\Psi_{\e}(\l),
\end{equation}
the problem \eqref{1.7} has, at least, a coexistence state.
\end{theorem}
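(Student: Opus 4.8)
The plan is to split the assertion into the two a priori inequalities, which follow directly from the characterization of the principal eigenvalue through positive solutions, and the existence statement, which I would obtain from a fixed-point-index computation on the positive cone; this last part is, in essence, Theorems~4.1 and 5.1 of \cite{LGMH20} specialized to \eqref{1.7}.

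\emph{The necessary conditions.} Let $(w,v)$ be a coexistence state, so $w\gg_1 0$ and $v\gg_2 0$ by the strong maximum principle. Writing the first equation of \eqref{1.7} as
\[
\Big(\mf{L}_1+b\tfrac{v}{1+mw}\Big)w=\l w-\e a\,w^2\;\;\hbox{in}\;\O,\qquad\mf{B}_1w=0\;\;\hbox{on}\;\p\O,
\]
$w$ is a positive solution of \eqref{ii.2} with $\k=1$, $V=b\frac{v}{1+mw}\in L^\infty(\O)$, $\r=\l$, $\xi=\e a\gg0$; hence, by Theorem~\ref{th2.3}, $\l>\s_0[\mf{L}_1+b\frac{v}{1+mw},\mf{B}_1,\O]\geq\s_{0,1}$. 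Symmetrically, the second equation exhibits $v$ as a positive solution of \eqref{ii.2} with $\k=2$, $V=-\e c\frac{w}{1+mw}$, $\r=\mu$, $\xi=d\gg0$, so $\mu>\s_0[\mf{L}_2-\e c\frac{w}{1+mw},\mf{B}_2,\O]$. Since $b\frac{wv}{1+mw}\gneq0$ and $\e c\frac{wv}{1+mw}\gneq0$, the same two equations show that $w$ is a positive strict subsolution of the logistic problem defining $\t_{[\mf{L}_1,\l,\e a]}$ and that $v$ is a positive strict supersolution of the one defining $\t_{[\mf{L}_2,\mu,d]}$; thus, by Theorem~\ref{th2.3}(c), $w\ll_1\t_{[\mf{L}_1,\l,\e a]}$, while $v\gg_2\t_{[\mf{L}_2,\mu,d]}$ if $\mu>\s_{0,2}$ and $v\geq0=\t_{[\mf{L}_2,\mu,d]}$ otherwise. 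As $t\mapsto t/(1+mt)$ is non-decreasing on $[0,\infty)$, these bounds give, pointwise in $\bar\O$, $\frac{1}{1+mw}\geq\frac{1}{1+m\t_{[\mf{L}_1,\l,\e a]}}$ and $\frac{w}{1+mw}\leq\frac{\t_{[\mf{L}_1,\l,\e a]}}{1+m\t_{[\mf{L}_1,\l,\e a]}}$, whence $b\frac{v}{1+mw}\geq b\frac{\t_{[\mf{L}_2,\mu,d]}}{1+m\t_{[\mf{L}_1,\l,\e a]}}$ and $-\e c\frac{w}{1+mw}\geq-\e c\frac{\t_{[\mf{L}_1,\l,\e a]}}{1+m\t_{[\mf{L}_1,\l,\e a]}}$. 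Feeding these into the monotonicity of the principal eigenvalue, Theorem~\ref{th2.1}, and combining with the two strict inequalities above yields $\l>\v_\e(\mu)$ and $\mu>\Psi_\e(\l)$.

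\emph{The existence assertion.} Assume now $\l>\Phi(\mu)$ and $\mu>\Psi_\e(\l)$; note $\l>\Phi(\mu)\geq\s_{0,1}$ by Theorem~\ref{th2.1}. The non-negative solutions of \eqref{1.7} are the fixed points in the positive cone $P\subset\mc{C}^1_{\mf{B}_1}(\bar\O)\times\mc{C}^1_{\mf{B}_2}(\bar\O)$ of a compact, order-preserving operator $\mathscr{A}$ obtained from the operator $\mf{F}$ of \eqref{3.8} after shifting $\mf{L}_1,\mf{L}_2$ by a large enough constant so that $\mathscr{A}(P\cap\bar B_R)\subset P$; here $R$ is chosen so large that every non-negative solution lies in $B_R$, which is possible because the comparisons of the previous step (together with $\frac{w}{1+mw}\leq w$ in the $v$-equation) furnish uniform $L^\infty$-bounds, and elliptic regularity then controls the $\mc{C}^1$-norms. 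Besides the coexistence states, the only fixed points of $\mathscr{A}$ are $(0,0)$, $(\t_{[\mf{L}_1,\l,\e a]},0)$, and, when $\mu>\s_{0,2}$, $(0,\t_{[\mf{L}_2,\mu,d]})$. Deforming $(\l,\mu)$ by a homotopy to very negative values (which only shrinks the a priori bounds) shows $\mathrm{ind}_P(\mathscr{A},P\cap B_R)=1$. On the other hand: (i) since $\l>\s_{0,1}$, the spectral radius of $D\mathscr{A}(0,0)$ exceeds $1$, with leading eigenvector in $P$, so $\mathrm{ind}_P(\mathscr{A},(0,0))=0$; (ii) the linearization of \eqref{1.7} at $(\t_{[\mf{L}_1,\l,\e a]},0)$ is block-triangular, the block transverse to the face $\{v=0\}$ being $\mf{L}_2-\e c\frac{\t_{[\mf{L}_1,\l,\e a]}}{1+m\t_{[\mf{L}_1,\l,\e a]}}-\mu$, whose principal eigenvalue $\Psi_\e(\l)-\mu$ is negative by hypothesis, so this semitrivial state is linearly unstable into $P$ and $\mathrm{ind}_P(\mathscr{A},(\t_{[\mf{L}_1,\l,\e a]},0))=0$; (iii) likewise, when $\mu>\s_{0,2}$, the block of the linearization at $(0,\t_{[\mf{L}_2,\mu,d]})$ transverse to $\{w=0\}$ is $\mf{L}_1+b\t_{[\mf{L}_2,\mu,d]}-\l$, with principal eigenvalue $\Phi(\mu)-\l<0$ by Theorem~\ref{th3.1}, whence $\mathrm{ind}_P(\mathscr{A},(0,\t_{[\mf{L}_2,\mu,d]}))=0$. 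Adding up local indices, the coexistence states must contribute $1\neq0$, so \eqref{1.7} has at least one.

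\emph{The main obstacle.} The necessary conditions are routine once the two equations are recast in the logistic form above. The work is concentrated in the existence part, whose delicate points are the two index-zero computations at the semitrivial solutions — which rest on the block-triangular structure of the linearization together with the cone-index theory for fixed points lying on a face of $P$ (in the spirit of Amann and Dancer) — and the identity $\mathrm{ind}_P(\mathscr{A},P\cap B_R)=1$, for which one has to check that the shifted operator $\mathscr{A}$ genuinely maps $P\cap\bar B_R$ into $P$ and that the a priori bounds remain uniform along the homotopy. These are precisely the ingredients packaged in Theorems~4.1 and 5.1 of \cite{LGMH20}, from which the statement follows at once.
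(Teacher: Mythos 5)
Your proof is correct and takes essentially the same route as the paper, which establishes Theorem~\ref{th3.4} simply by citing Theorems~4.1 and 5.1 of \cite{LGMH20}. Your two-step argument --- recasting each equation as a logistic problem and invoking the eigenvalue monotonicity and comparison theorems for the necessary conditions, and the Amann--Dancer fixed-point index in the positive cone (with the local index evaluations at $(0,0)$ and the two semitrivial states) for the existence part --- is precisely the content of those cited theorems specialized to \eqref{1.7}, so you have in effect reconstructed the argument the paper delegates to its reference.
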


Since $\t_{[\mf{L}_2,\mu,d]}= 0$ if $\mu\leq\s_{0,2}$, under this  condition, both \eqref{3.15} and
\eqref{3.16} become into
\begin{equation}
\label{3.17}
\l>\s_{0,1}\quad\hbox{and}\quad \mu>\Psi_\e(\l).
\end{equation}
Therefore, \eqref{3.17} is not only necessary but also sufficient for the existence of a coexistence state if $\mu\leq \s_{0,2}$. Figure \ref{Fig2} sketches the construction of the  wedges \eqref{3.15} and \eqref{3.16} given by
Theorem \ref{th3.4}. Note that, according to Theorem \ref{th2.1},
$$
  \v_\e(\mu)\equiv \s_0\Big[ \mf{L}_1+b \tfrac{\t_{[\mf{L}_2,\mu,d]}}{1+m\t_{[\mf{L}_1,\l,\e a]}},\mf{B}_1,\O\Big]< \s_0[ \mf{L}_1+b \t_{[\mf{L}_2,\mu,d]},\mf{B}_1,\O\Big]\equiv \Phi(\mu)
$$
for all $\mu>\s_{0,2}$. More precisely, by Theorem \ref{th3.4}, \eqref{1.7} has a coexistence state in the solid (dark) area of Figure \ref{Fig2}, whereas outside the union of the solid and dashed patches of Figure \ref{Fig2}, it cannot admit any coexistence state.

\begin{figure}[h!]
\centering
\includegraphics[scale=0.91]{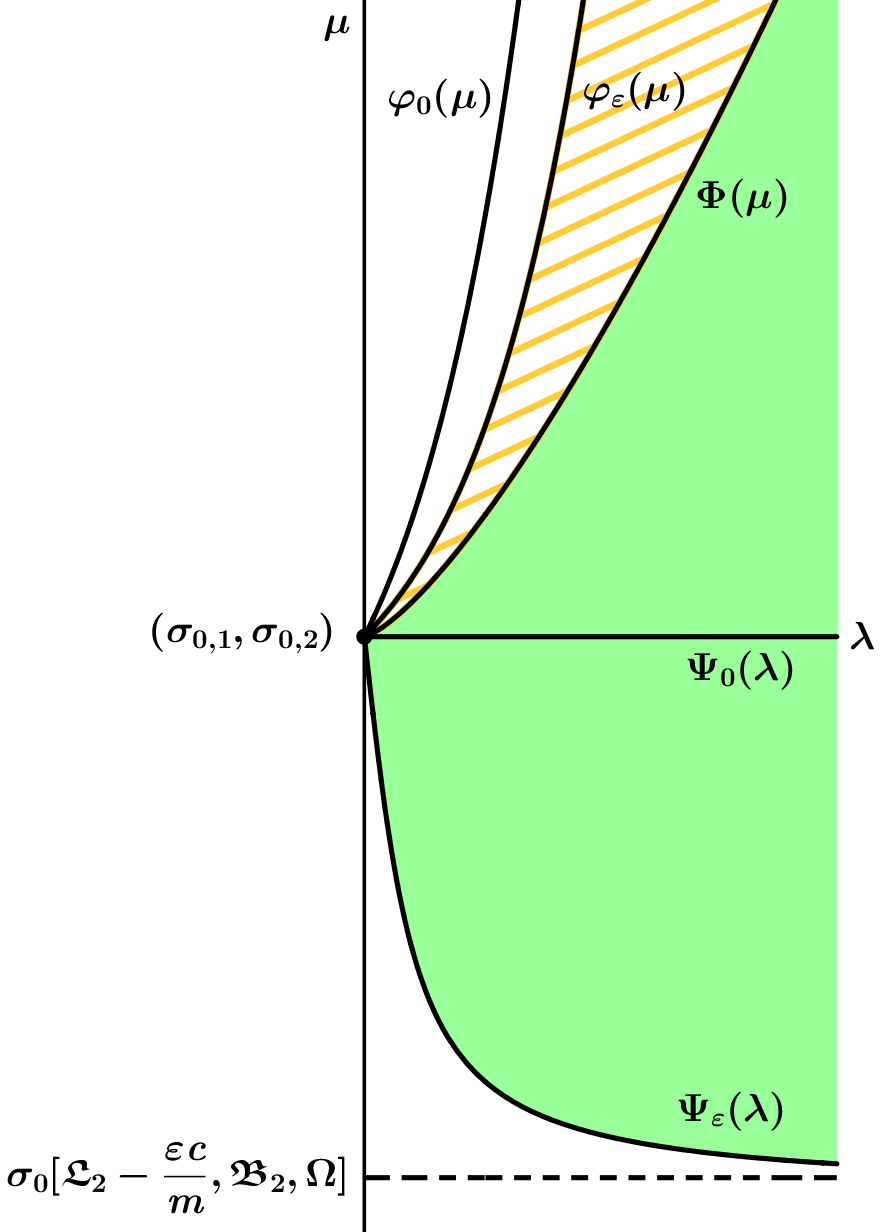}
\includegraphics[scale=0.912]{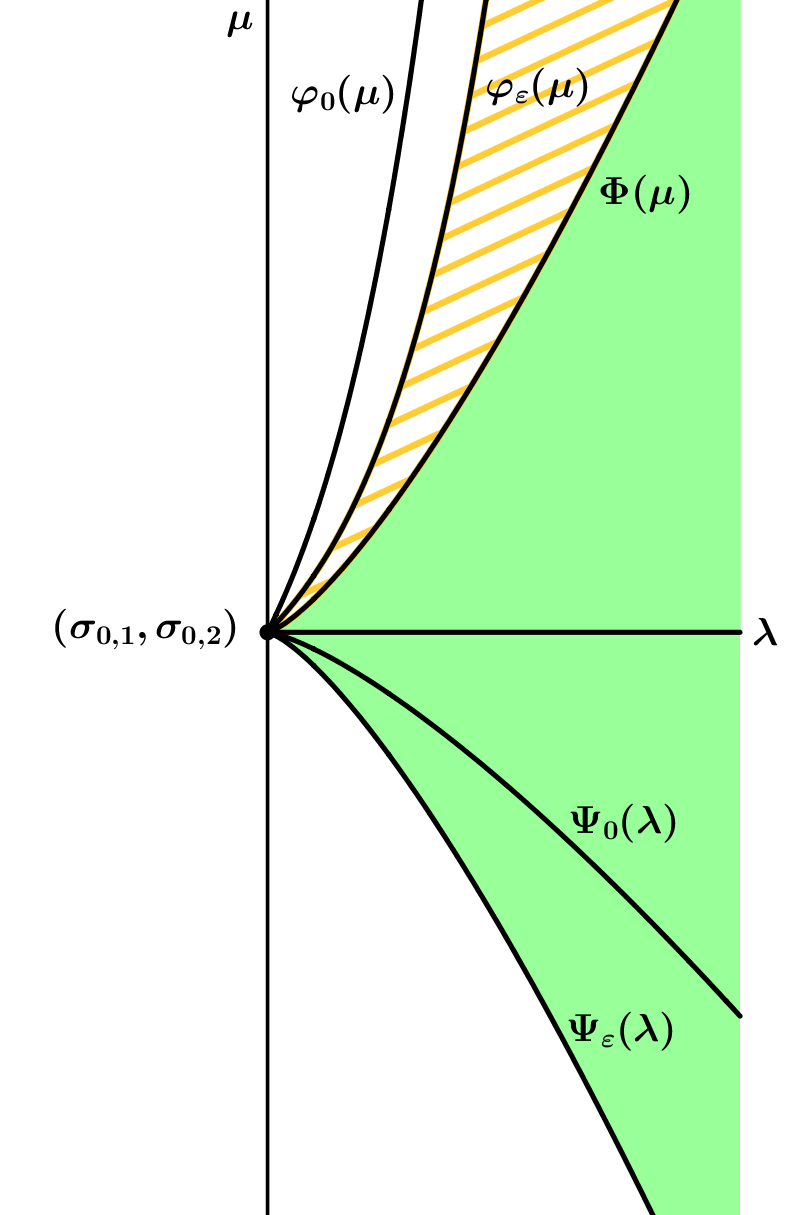}
\caption{The coexistence regions of \eqref{1.7} according to Theorem \ref{th3.4}}
\label{Fig2}
\end{figure}

As already discussed by the authors in Section 3 of \cite{LGMH20}, the first picture of Figure \ref{Fig2}
sketches the behavior of the curve $\mu =\Psi_\e(\l)$, $\l>\s_{0,1}$, when  $m(x)>0$ for all $x\in\bar{\O}$,
whereas the second picture shows it when $\O_0:= \mathrm{int\,}m^{-1}(0)$ is non-empty, which is the
general case dealt with in this paper. In the classical Holling--Tanner case when $m(x)>0$ for all $x\in\bar\O$, thanks to Theorem \ref{th2.2}, it becomes apparent that
$$
\Psi_\e(\l)\equiv \s_0\Big[ \mf{L}_2-\e c\tfrac{m \t_{[\mf{L}_1,\l,\e a]}}{m(1+m\t_{[\mf{L}_1,\l,\e a]})},\mf{B}_2,\O\Big]>\s_0\Big[ \mf{L}_2- \frac{\e c}{m},\mf{B}_2,\O\Big] \quad \hbox{for all}\;\;\l >\s_{0,1},
$$
as illustrated in the first picture of Figure \ref{Fig2}. However, when $\O_0:= \mathrm{int\,}m^{-1}(0)$ is a nice (non-empty) open subset with $\bar\O_0\subset \O$, by \cite[Pr. 3.2]{CCLG}, we have that
$$
  \Psi_\e(\l)\equiv \s_0\Big[ \mf{L}_2-\e c  \tfrac{\t_{[\mf{L}_1,\l,\e a]}}{1+m\t_{[\mf{L}_1,\l,\e a]}},\mf{B}_2,\O\Big] < \s_0\Big[ \mf{L}_2-\e c \t_{[\mf{L}_1,\l,\e a]},\mf{D},\O_0\Big],
$$
where $\mf{D}$ stands for the Dirichlet boundary operator on $\p\O_0$. Thus,
$$
  \lim_{\l\ua \infty} \Psi_\e(\l)=-\infty \quad \hbox{for all}\;\;\e>0,
$$
as illustrated in the second picture of Figure \ref{Fig2}, which is a behavior reminiscent of the
one exhibited by the classical Lotka--Volterra model.
\par
Since
\begin{equation}
\label{3.18}
   \t_{[\mf{L}_1,\l,\e a]} =\e^{-1} \t_{[\mf{L}_1,\l,a]},
\end{equation}
it is apparent that
\begin{align*}
\lim_{\e\da 0}\v_\e(\mu) & =\lim_{\e\da 0} \s_0\Big[ \mf{L}_1+b \tfrac{\t_{[\mf{L}_2,\mu,d]}}{1+\frac{m}{\e}\t_{[\mf{L}_1,\l,a]}},\mf{B}_1,\O\Big]\\
& = \s_0\left[\mf{L}_1+\left(1-\chi_{_{\mathrm{int \, supp}\,m}}\right)
b(x)\t_{[\mf{L}_2,\mu,d]},\mf{B}_1,\O\right],
\end{align*}
where, for any subset $A\subset\R^N$, $\chi_{_A}$ stands for the characteristic function of
the set $A$, i.e., $\chi_{_A}(x)=1$ if $x\in A$, and $\chi_{_A}(x)=0$ if $x\in\R^N\setminus A$.
In the next section, it will become apparent that the function
\begin{equation}
\label{3.19}
  \v_0(\mu):= \s_0\left[\mf{L}_1+\left(1-\chi_{_{\mathrm{int\,supp}\,m}}\right)
b(x)\t_{[\mf{L}_2,\mu,d]},\mf{B}_1,\O\right],\qquad \mu>\s_{0,2},
\end{equation}
provides us with the left limiting curve to the region  where the uncoupled model \eqref{1.8} possesses a coexistence state; recall that \eqref{1.8} is \eqref{1.7} with $\e=0$. The curve $\l=\v_0(\mu)$ has been also plotted in Figure \ref{Fig2}. According to Theorem \ref{th2.1}, since
$$
  1-\chi_{_{\mathrm{int\,supp}\,m}} \lneq \tfrac{1}{1+m\t_{[\mf{L}_1,\l,\e a]}}\quad \hbox{in}\;\; \O,
$$
it follows that, for every $\mu>\s_{0,2}$ and $\e>0$,
\begin{equation}
\label{3.20}
  \v_0(\mu)<\v_\e(\mu),
\end{equation}
provided $bm\gneq0$, as illustrated in Figure \ref{Fig2}. Finally, note that, for every $\l>\s_{0,1}$,
\begin{align*}
\lim_{\e\da 0}\Psi_\e(\l) & =\lim_{\e\da 0} \s_0\Big[ \mf{L}_2-c \tfrac{\t_{[\mf{L}_1,\l,a]}}{1+\frac{m}{\e}\t_{[\mf{L}_1,\l,a]}},\mf{B}_2,\O\Big]\\
& = \s_0\left[\mf{L}_2-\left(1-\chi_{_{\mathrm{int \, supp}\,m}}\right)c(x)\t_{[\mf{L}_1,\l,a]},\mf{B}_2,\O\right]\equiv\Psi_0(\l)\leq\s_{0,2}.
\end{align*}
Although $\Psi_0(\mu)$ can take different values depending on the distribution of the patches where $m(x)$ and $c(x)$ vanish, this does not affect the analysis of \eqref{1.8}, for as the condition $\mu>\s_{0,2}$ is necessary for the existence of coexistence states.

\section{The coexistence states of the limiting system \eqref{1.8}}
\label{sec4}
This section determines the set of coexistence states of the limiting shadow problem \eqref{1.8}.
Since the component $v$ satisfies
\begin{equation*}
\left\{
\begin{array}{lll}
\mf{L}_2 v=\mu v -d(x)v^2&\quad \hbox{in}\;\;\Omega,\\[1ex]
\mf{B}_2 v=0&\quad\hbox{on}\;\;\partial\Omega,
\end{array}
\right.	
\end{equation*}
the condition $\mu>\s_{0,2}\equiv \s_0[\mf{L}_2,\mf{B}_2,\O]$ is imperative so that
\eqref{1.8} can admit a coexistence state. Otherwise, $v=0$ for any component-wise nonnegative
solution, $(w,v)$, of \eqref{1.8}. Thus, throughout this section, we assume that
$\mu>\s_{0,2}$. In such case, by Theorem \ref{th2.3}, for every coexistence state $(w,v)$ of \eqref{1.8},
necessarily $v=\t_{[\mf{L}_2,\mu,d]}\gg_2 0$, and $w\gg_1 0$ is a positive solution of the associated problem
\begin{equation}
\label{4.1}
\left\{
\begin{array}{lll}
\mf{L}_1 w=\lambda w-b(x)\t_{[\mf{L}_2,\mu,d]}\dfrac{w}{1+m(x)w} &\quad \hbox{in}\;\;\Omega,\\[10pt]
\mf{B}_1 w=0 &\quad\hbox{on}\;\;\partial\Omega.
\end{array}
\right.
\end{equation}
Note that, as soon as  $b(x)$ and $m(x)$ have disjoint supports, i.e., $bm=0$, one has that
$$
\left(  1-\chi_{_{\mathrm{int\,supp}\,m}} \right) b = \frac{b}{1+m w}
$$
and, hence, \eqref{4.1} becomes into the linear problem
\begin{equation*}
\left\{
\begin{array}{lll}
\left[\mf{L}_1+\left(  1-\chi_{_{\mathrm{int\,supp}\,m}} \right) b \t_{[\mf{L}_2,\mu,d]}\right] w=\lambda w&\quad \hbox{in}\;\;\Omega,\\[10pt]
	\mf{B}_1 w=0 &\quad\hbox{on}\;\;\partial\Omega.
\end{array}
\right.
\end{equation*}
Therefore, when $bm=0$, \eqref{4.1} has a positive solution if, and only of, $\l=\v_0(\mu)$ (see \eqref{3.19}) and, in such case, $w$ is a positive solution if, and only if, $w=s w_0$ for some $s>0$, where $w_0\gg_1 0$ stands for
any principal eigenfunction associated to $\l=\v_0(\mu)$. The next result collects some useful properties of \eqref{4.1}

\begin{lemma}
\label{le4.1}
Suppose $w\neq 0$ is a positive solution of \eqref{4.1}. Then, $w\gg_1 0$ and
\begin{equation}
\label{4.2}
\l=\s_0\Big[\mf{L}_1+\tfrac{b(x)\t_{[\mf{L}_2,\mu,d]}}{1+m(x)w},\mf{B}_1,\O\Big].
\end{equation}
Thus,
\begin{equation}
\label{4.3}
\s_{0,1}\leq\varphi_0(\mu)\leq \l <\Phi(\mu),
\end{equation}
where $\v_0(\mu)$ and $\Phi(\mu)$ are the functions defined in \eqref{3.19} and \eqref{3.16}, respectively.
More precisely,
\begin{equation*}
\left\{
\begin{array}{ll}
\s_{0,1}\leq\varphi_0(\mu)< \l <\Phi(\mu)&\qquad\hbox{if}\;\; bm\gneq0,\\[1ex]
\s_{0,1}<\varphi_0(\mu)=\l <\Phi(\mu)&\qquad\hbox{if}\;\; bm=0.
\end{array}
\right.
\end{equation*}
In other words, either $(\l,\mu)$ lies in the wedges region between the curves $\v_0(\mu)$ and $\Phi(\mu)$ in Figure \ref{Fig2} if $bm\gneq0$, or $\l=\v_0(\mu)$ if $bm=0$.
\end{lemma}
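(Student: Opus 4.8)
The plan is to prove the three assertions in turn, exploiting the characterization of the principal eigenvalue and the monotonicity theorem (Theorem \ref{th2.1}) as the main tools.

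First I would establish $w\gg_1 0$ and the identity \eqref{4.2}. Assume $w\gneq 0$ solves \eqref{4.1}. Observe that $w$ solves the linear problem $\mf{L}_1 w = \l w - V(x) w$ in $\O$ with $\mf{B}_1 w=0$, where $V(x):= \tfrac{b(x)\t_{[\mf{L}_2,\mu,d]}(x)}{1+m(x)w(x)}\in L^\infty(\O)$; equivalently $(\mf{L}_1 + V - \l)w = 0$. Hence $w$ is a nonnegative eigenfunction of $(\mf{L}_1+V,\mf{B}_1,\O)$ with eigenvalue $0$. Since the only eigenvalue of that operator admitting a nonnegative (nontrivial) eigenfunction is the principal one — this is where I invoke the strong-positivity/simplicity statements recalled after \eqref{ii.1} together with Theorem \ref{th2.2} — we conclude $\l=\s_0[\mf{L}_1+V,\mf{B}_1,\O]$, which is \eqref{4.2}, and moreover the associated principal eigenfunction is strongly positive, so $w\gg_1 0$. (Positivity of $w$ on $\O\cup\G_1^1$ and the normal-derivative sign on $\G_0^1$ both follow.)

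Next I would derive the bounds \eqref{4.3}. The upper bound: since $w\gg_1 0$ and $m\geq 0$, we have $\tfrac{1}{1+mw}\leq 1$ pointwise, so $\tfrac{b\t_{[\mf{L}_2,\mu,d]}}{1+mw}\leq b\t_{[\mf{L}_2,\mu,d]}$, with the inequality strict on a set of positive measure whenever $bm\gneq 0$ (because there $b\t_{[\mf{L}_2,\mu,d]}>0$ and $mw>0$), and an identity when $bm=0$. By Theorem \ref{th2.1} applied with $V_1 = \tfrac{b\t_{[\mf{L}_2,\mu,d]}}{1+mw}\lneq V_2 = b\t_{[\mf{L}_2,\mu,d]}$, we get $\l = \s_0[\mf{L}_1+V_1,\mf{B}_1,\O] < \s_0[\mf{L}_1+b\t_{[\mf{L}_2,\mu,d]},\mf{B}_1,\O] = \Phi(\mu)$ in the case $bm\gneq 0$, and $\l=\Phi(\mu)$ is never attained. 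The lower bound: I claim $\tfrac{1}{1+mw}\geq 1-\chi_{_{\mathrm{int\,supp}\,m}}$ in $\O$. Indeed, on $\mathrm{int\,supp}\,m$ the right side is $0\leq\tfrac{1}{1+mw}$; off $\overbar{\mathrm{supp}\,m}$ we have $m=0$ so both sides equal $1$; and the boundary of the support is Lebesgue-null. Multiplying by $b\t_{[\mf{L}_2,\mu,d]}\geq 0$ and applying Theorem \ref{th2.1} again yields $\s_0[\mf{L}_1+(1-\chi_{_{\mathrm{int\,supp}\,m}})b\t_{[\mf{L}_2,\mu,d]},\mf{B}_1,\O]\leq \l$, i.e. $\v_0(\mu)\leq\l$ by \eqref{3.19}; when $bm\gneq 0$ this inequality is strict as above. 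The bound $\s_{0,1}\leq\v_0(\mu)$ is immediate from Theorem \ref{th2.1} since $(1-\chi_{_{\mathrm{int\,supp}\,m}})b\t_{[\mf{L}_2,\mu,d]}\geq 0$, and it is strict precisely when that potential is $\gneq 0$, which holds when $bm=0$ (since then $b\t_{[\mf{L}_2,\mu,d]}\gneq 0$ is supported off $\mathrm{supp}\,m$). Assembling these strict/non-strict alternatives gives the two displayed chains of inequalities, and the final "in other words" sentence is just their geometric rephrasing against Figure \ref{Fig2}.

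The main obstacle I anticipate is the care needed in the case distinctions — in particular verifying, in the $bm\gneq 0$ case, that $\tfrac{b\t_{[\mf{L}_2,\mu,d]}}{1+mw}$ and $b\t_{[\mf{L}_2,\mu,d]}$ genuinely differ on a set of positive measure (so that the strict monotonicity of Theorem \ref{th2.1} applies, not merely the non-strict version). This rests on the fact that $bm\gneq 0$ forces $b>0$ and $m>0$ simultaneously on a set of positive measure, combined with $\t_{[\mf{L}_2,\mu,d]}\gg_2 0$ and $w\gg_1 0$ (both strictly positive in $\O$), which makes $mw>0$ there and hence the quotient strictly smaller than the numerator on that set. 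The analogous positive-measure argument is what upgrades $\s_{0,1}\le\v_0(\mu)$ to a strict inequality when $bm=0$. Everything else is a direct citation of Theorems \ref{th2.1} and \ref{th2.2}.
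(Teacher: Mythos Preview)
Your proof is correct and follows essentially the same line as the paper's: both obtain $w\gg_1 0$ and \eqref{4.2} from the uniqueness of the principal eigenvalue admitting a nonnegative eigenfunction, and then derive \eqref{4.3} and the case distinctions by sandwiching the potential $b\t_{[\mf{L}_2,\mu,d]}/(1+mw)$ between $(1-\chi_{_{\mathrm{int\,supp}\,m}})b\t_{[\mf{L}_2,\mu,d]}$ and $b\t_{[\mf{L}_2,\mu,d]}$ and invoking Theorem~\ref{th2.1}. Your remark that the upper comparison becomes an identity when $bm=0$ is in fact sharper than the paper's display \eqref{4.4}; in that degenerate case one actually has $\l=\v_0(\mu)=\Phi(\mu)$, so the printed strict inequality $\l<\Phi(\mu)$ there is a slip, but a harmless one since the paper restricts to $bm\gneq 0$ immediately after the lemma.
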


\begin{proof}
Since
$$
   \left( \mf{L}_1+\tfrac{b(x)\t_{[\mf{L}_2,\mu,d]}}{1+m(x)w}\right)w =\l w \quad \hbox{in}\;\;\O,
$$
and $\mf{B}_1 w=0$, with $w \gneq 0$, the identity \eqref{4.2} is a direct consequence of the uniqueness
of $\s_0$, and $w\gg_1 0$, by the properties of the positive principal eigenfunctions.
\par
On the other hand, by our assumptions on $m(x)$,
$$
   1-\chi_{_{\mathrm{int\,supp}\,m}} \lneq \frac{1}{1+m w}\quad \hbox{in}\;\; \O.
$$
Thus, since $b\t_{[\mf{L}_2,\mu,d]}\gneq 0$, it is apparent that
\begin{equation}
\label{4.4}
0\leq \left(  1-\chi_{_{\mathrm{int\,supp}\,m}} \right) b \t_{[\mf{L}_2,\mu,d]}\leq \frac{b\t_{[\mf{L}_2,\mu,d]}}{1+m w}\lneq b \t_{[\mf{L}_2,\mu,d]}\quad \hbox{in}\;\;\O.
\end{equation}
Therefore, by \eqref{4.4} and  Theorem \ref{th2.1}, \eqref{4.3} holds.
\par
Now, note that $\v_0(\mu)= \l$ unless $b(x)>0$ for some $x\in \mathrm{supp\,}m$. Thus, $\v_0(\mu)<\l$ if
$bm\gneq 0$,  and $\v_0(\mu)=\l>\s_{0,1}$ if $bm= 0$. Moreover, in  case $bm\gneq 0$, we have that, for every $\mu>\s_{0,2}$,
$$
\v_0(\mu)= \s_0\left[\mf{L}_1+\left(1-\chi_{_{\mathrm{int\,supp}\,m}}\right)
b\t_{[\mf{L}_2,\mu,d]},\mf{B}_1,\O\right] = \s_{0,1}
$$
if, and only if,
$$
  \left(1-\chi_{_{\mathrm{int \,supp}\,m}}\right) b=0,
$$
and this occurs provided $m(x)>0$ for all $x\in\O$, like in the
Holling--Tanner model, or $m^{-1}(0)\subset b^{-1}(0)\neq\emptyset$, i.e., $\supp\,b\subset \supp\,m\neq\emptyset$. This ends the proof.
\end{proof}

It should not be forgotten that
throughout this paper we are assuming that $m=0$ in $\bar \O_0$ but $m\gneq 0$. Therefore, we are in a rather hybrid (different) situation between the Lotka--Volterra and the Holling--Tanner models.
\par
Throughout the rest of this paper, we will assume that $bm\gneq 0$, even if not expressed within text.
This entails that $\v_0(\mu)<\l<\Phi(\mu)$ if \eqref{4.1} has some positive solution. The next results collects two important qualitative properties of the positive solutions of \eqref{4.1}.

\begin{lemma}
\label{le4.2}
Let $\{(\l_n,w_n)\}_{n\geq1}$ be a sequence of positive solutions of \eqref{4.1} such that
$$
  \lim_{n\to+\infty}\l_n=\l_*.
$$
Then $\l^* \in[\v_0(\mu),\Phi(\mu)]$. Moreover:
\begin{enumerate}
\item[\rm (a)] $\displaystyle\lim_{n\to\infty}\|w_n\|_{\infty}=+\infty$ if, and only if, $\l_*=\v_0(\mu)$;
\item[\rm (b)]$\displaystyle \lim_{n\to\infty}\|w_n\|_{\infty}=0$ if, and only if, $\l_*=\Phi(\mu)$.
\end{enumerate}
\end{lemma}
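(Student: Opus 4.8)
The plan is to combine the eigenvalue identity \eqref{4.2} of Lemma \ref{le4.1} with the compactness afforded by the elliptic $L^p$-estimates. The inclusion $\l_*\in[\v_0(\mu),\Phi(\mu)]$ will be immediate: since we are assuming $bm\gneq0$, Lemma \ref{le4.1} gives $\v_0(\mu)<\l_n<\Phi(\mu)$ for every $n$, so the limit lies in the closed interval. For the rest, the recurring mechanism is the following: whenever $\{w_n\}$, or the rescaling $\psi_n:=w_n/\|w_n\|_\infty$, is bounded in $L^\infty(\O)$, the right-hand side of the corresponding equation \eqref{4.1} is bounded in $L^\infty(\O)$, so the sequence is bounded in $\bigcap_{p\geq N}W^{2,p}(\O)$ and hence precompact in $\mc{C}^1_{\mf{B}_1}(\bar\O)$; one then passes to the limit in the equation.

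For part (a), I would first assume $\|w_n\|_\infty\to+\infty$ and set $\psi_n:=w_n/\|w_n\|_\infty$, so that $\mf{L}_1\psi_n=\l_n\psi_n-\frac{b\t_{[\mf{L}_2,\mu,d]}}{1+mw_n}\psi_n$ in $\O$ with $\mf{B}_1\psi_n=0$ and $\|\psi_n\|_\infty=1$. Since $0\le\frac{b\t_{[\mf{L}_2,\mu,d]}}{1+mw_n}\psi_n\le b\t_{[\mf{L}_2,\mu,d]}$ and $\{\l_n\}$ is bounded, $\{\psi_n\}$ is precompact in $\mc{C}^1_{\mf{B}_1}(\bar\O)$, so along a subsequence $\psi_n\to\psi\gneq0$ with $\|\psi\|_\infty=1$. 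The crucial step is to check that $\frac{b\t_{[\mf{L}_2,\mu,d]}}{1+mw_n}\psi_n\to(1-\chi_{_{\mathrm{int\,supp}\,m}})\,b\t_{[\mf{L}_2,\mu,d]}\psi$ pointwise a.e.\ in $\O$: on $m^{-1}(0)$ this is clear because $mw_n\equiv0$ there; at a point where $m>0$ and $\psi>0$ one has $w_n=\|w_n\|_\infty\psi_n\to+\infty$, so the factor $(1+mw_n)^{-1}\to0$; and where $\psi=0$ the factor $\psi_n\to0$ forces the term to $0$. As everything is dominated by $b\t_{[\mf{L}_2,\mu,d]}\in L^\infty(\O)$, the convergence also holds in $L^p(\O)$ for every $p<\infty$, and passing to the limit in the equation yields $\big(\mf{L}_1+(1-\chi_{_{\mathrm{int\,supp}\,m}})b\t_{[\mf{L}_2,\mu,d]}\big)\psi=\l_*\psi$, $\mf{B}_1\psi=0$, with $\psi\gneq0$; by Theorem \ref{th2.2} then $\psi\gg_1 0$, so $\l_*$ must be the principal eigenvalue of this tern, i.e.\ $\l_*=\v_0(\mu)$ by \eqref{3.19}. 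Since $\l_*$ is independent of the subsequence, this settles the implication. For the converse, if $\l_*=\v_0(\mu)$ but $\|w_n\|_\infty\not\to+\infty$, a subsequence is $L^\infty$-bounded, hence precompact in $\mc{C}^1_{\mf{B}_1}(\bar\O)$, with limit $w\ge0$ solving \eqref{4.1} at $\l=\v_0(\mu)$; if $w\neq0$, Lemma \ref{le4.1} gives $\v_0(\mu)<\v_0(\mu)$, and if $w=0$ then $\frac{b\t_{[\mf{L}_2,\mu,d]}}{1+mw_n}\to b\t_{[\mf{L}_2,\mu,d]}$ in $L^\infty(\O)$, so Theorem \ref{th2.1} and \eqref{4.2} give $\l_n\to\Phi(\mu)\neq\v_0(\mu)$; both cases are impossible, so $\|w_n\|_\infty\to+\infty$.

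For part (b), the forward implication is the easy one: if $\|w_n\|_\infty\to0$ then $\big\|\tfrac{b\t_{[\mf{L}_2,\mu,d]}}{1+mw_n}-b\t_{[\mf{L}_2,\mu,d]}\big\|_\infty\le\|b\t_{[\mf{L}_2,\mu,d]}\|_\infty\,\|w_n\|_\infty\to0$, so Theorem \ref{th2.1} and \eqref{4.2} yield $\l_n\to\s_0[\mf{L}_1+b\t_{[\mf{L}_2,\mu,d]},\mf{B}_1,\O]=\Phi(\mu)$. For the converse, if $\l_*=\Phi(\mu)$ but $\|w_n\|_\infty\not\to0$, a subsequence satisfies $\|w_n\|_\infty\ge\d>0$; if a further subsequence is $L^\infty$-bounded, its $\mc{C}^1$-limit $w$ has $\|w\|_\infty\ge\d$ and solves \eqref{4.1} at $\l=\Phi(\mu)$, contradicting Lemma \ref{le4.1}; otherwise $\|w_n\|_\infty\to+\infty$ along a subsequence and, by part (a), $\l_n\to\v_0(\mu)\neq\Phi(\mu)$, a contradiction. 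Hence $\|w_n\|_\infty\to0$.

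I expect the genuine difficulty to be the forward implication of (a): the potentials $\frac{b\t_{[\mf{L}_2,\mu,d]}}{1+mw_n}$ need not converge in $L^\infty(\O)$, because the blow-up of $w_n$ is controlled only in sup-norm and not locally uniformly up to $\p(\mathrm{supp}\,m)$, so Theorem \ref{th2.1} cannot be invoked for them directly; the argument must instead go through the rescaled eigenfunctions $\psi_n$, establishing a.e.\ (hence $L^p$) convergence of the nonlinear term and then identifying the limiting eigenvalue problem. A smaller point to be handled carefully is that, under the standing hypotheses on $\O_0$, the sets $\mathrm{int\,supp}\,m$ and $\{x:m(x)>0\}$ coincide up to a null set, so that the limiting potential is exactly the one defining $\v_0(\mu)$ in \eqref{3.19}.
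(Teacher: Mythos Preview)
Your proof is correct and follows essentially the same route as the paper's: Lemma~\ref{le4.1} for the interval, rescaling $\psi_n=w_n/\|w_n\|_\infty$ plus elliptic compactness for the forward implication of (a), and compactness-plus-contradiction (invoking Lemma~\ref{le4.1} and the already-established implications) for the converses. If anything, you are more explicit than the paper at the delicate point---the a.e.\ identification of the limit of $\tfrac{b\theta_{[\mf{L}_2,\mu,d]}}{1+mw_n}\psi_n$ via the case split on $\{m=0\}$, $\{m>0,\psi>0\}$, $\{\psi=0\}$---where the paper simply asserts the limiting equation \eqref{iv.9}.
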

\begin{proof}
By \eqref{4.3}, necessarily,
\begin{equation}
\label{iv.5}
  \varphi_0(\mu)< \l_n <\Phi(\mu)\quad \hbox{for all}\;\; n\geq 1.
\end{equation}
Thus, letting $n\to\infty$, yields
to  $\l^*\in [\v_0(\mu),\Phi(\mu)]$. Now, in order to prove the necessity of Part (a), suppose that
\begin{equation}
\label{iv.6}
  \lim_{n\to+\infty}\|w_n\|_{\infty}=+\infty.
\end{equation}
Then, by expressing \eqref{4.1} as a fixed point equation and dividing by $\|w_n\|_{\infty}$, it becomes apparent that, for every  $e>\s_{0,1}$ and $n\geq 1$,
\begin{equation}
\label{iv.7}
\frac{w_n}{\|w_n\|_{\infty}}=(\mf{L}_1+e)^{-1}\Big[(\l_n+e)\frac{w_n}{\|w_n\|_{\infty}}-
b\t_{[\mf{L}_2,\mu,d]}\tfrac{w_n}{\|w_n\|_{\infty}(1+m\frac{w_n}{\|w_n\|_{\infty}}\|w_n\|_{\infty})}\Big].	 \end{equation}
Since the sequence of continuous functions
\[
(\l_n+e)\frac{w_n}{\|w_n\|_{\infty}}-b\t_{[\mf{L}_2,\mu,d]}\frac{w_n}{\|w_n\|_{\infty}(1+m\frac{w_n}{\|w_n\|_{\infty}}
\|w_n\|_{\infty})},\qquad n\geq 1,
\]
is bounded in $\mc{C}(\bar\O)$ and  $(\mf{L}_1+e)^{-1}$ is a compact operator, it follows from
\eqref{iv.7} that there exists
$\psi\in \mc{C}^1_{\mf{B}_1}(\bar\O)$ such that, along some subsequence, labeled by $n_\ell$,
\begin{equation}
\label{iv.8}
 \lim_{\ell\to+\infty}\frac{w_{n_\ell}}{\|w_{n_\ell}\|_{\infty}}=\psi\quad \hbox{in}\;\; \mc{C}^1_{\mf{B}_1}(\bar\O).
\end{equation}
By \eqref{iv.8}, $\psi\gneq0$ and $\|\psi\|_{\infty}=1$. Moreover, by elliptic regularity,
particularizing \eqref{iv.7} at $n=n_\ell$ and letting $\ell\to+\infty$ in the resulting identity,
shows that $\psi\in\mathscr{W}_1$ and that it solves the problem
\begin{equation}
\label{iv.9}
\left\{
\begin{array}{ll}
\left[\mf{L}_1+(1-\chi_{_{\mathrm{int\,supp\,}m}})b\t_{[\mf{L}_2,\mu,d]} \right]\psi=\l_*\psi &\quad \hbox{in}\;\;\O,\\[1ex]
\mf{B}_1\psi=0 & \quad \hbox{on}\;\; \p\O.
\end{array}
\right.
\end{equation}
From \eqref{iv.9} it is apparent that  $\psi\gg_1 0$ and that
$$
   \l_*=\s_0\left[\mf{L}_1+(1-\chi_{_{\mathrm{int\,supp\,}m}})b\t_{[\mf{L}_2,\mu,d]},\mf{B}_1,\O\right]
   \equiv \v_0(\mu).
$$
This shows that, indeed, $\l_*=\v_0(\mu)$ if \eqref{iv.6} holds.
\par
Adapting the previous argument, it is easily seen that
\begin{equation}
\label{iv.10}
  \lim_{n\to+\infty}\|w_n\|_{\infty}=0
\end{equation}
guarantees the existence of some $\psi\in\mathscr{W}_1$, with $\|\psi\|_\infty=1$, such that
\begin{equation*}
\left\{
\begin{array}{ll}
\left(\mf{L}_1+b\t_{[\mf{L}_2,\mu,d]} \right)\psi=\l_*\psi &\quad \hbox{in}\;\;\O,\\[1ex]
\mf{B}_1\psi=0 &\quad \hbox{on}\;\; \p\O. \end{array} \right.
\end{equation*}
Therefore, \eqref{iv.10} implies  that
\begin{equation*}
  \l_* = \s_0[\mf{L}_1+b\t_{[\mf{L}_2,\mu,d]},\mf{B}_1,\O]\equiv \Phi(\mu),
\end{equation*}
which ends the proof of the necessity in Part (b).
\par
To prove the sufficiency in Part (a), assume that
$\l_*=\v_0(\mu)$ and that \eqref{iv.6} fails. Then, there exists a constant, $C>0$, such that, along some subsequence of $\{w_n\}_{n\geq 1}$,
\begin{equation}
\label{4.11}
  \|w_{n_\ell}\|_\infty \leq C \quad \hbox{for all}\;\; \ell\geq 1.
\end{equation}
By the necessity of Part (b), $\{w_{n_\ell}\}_{n\geq 1}$ cannot admit any subsequence converging to zero in $\mc{C}(\bar\O)$, because, in such case, $\v_0(\mu)=\l_*=\Phi(\mu)$, which contradicts $\v_0(\mu)<\Phi(\mu)$
(see \eqref{iv.5}). On the other hand, since
\begin{equation}
\label{4.12}
  w_{n_\ell}=(\mf{L}_1+e)^{-1}\left[(\l_{n_\ell}+e)w_{n_\ell}-b\t_{[\mf{L}_2,\mu,d]} \frac{w_{n_\ell}}{1+mw_{n_\ell}}\right] \quad \hbox{for all}\;\; \ell\geq 1
\end{equation}
and, due to \eqref{4.11}, the sequence
$$
  (\l_{n_\ell}+e)w_{n_\ell}-b\t_{[\mf{L}_2,\mu,d]} \frac{w_{n_\ell}}{1+mw_{n_\ell}},\qquad \ell\geq 1,
$$
is bounded, by the compactness of $(\mf{L}_1+e)^{-1}$, we can extract a subsequence of  $\{w_{n_\ell}\}_{n\geq 1}$, relabeled by $n_\ell$, such that, for some $\Psi\in \mc{C}^1_{\mf{B}_1}(\bar\O)$,
\begin{equation}
\label{4.13}
  \lim_{\ell\to\infty} w_{n_\ell} =\Psi \quad \hbox{in}\;\; \mc{C}_{\mf{B}_1}^1(\bar\O).
\end{equation}
As we already know that $\{w_{n_\ell}\}_{n\geq 1}$ cannot converge to zero in $\mc{C}(\bar\Omega)$, it becomes apparent that $\Psi\gneq 0$. Moreover, letting $\ell\to\infty$ in \eqref{4.12} shows that
\begin{equation}
\label{4.14}
  \Psi =(\mf{L}_1+e)^{-1}\left[(\v_0(\mu)+e)\Psi -b\t_{[\mf{L}_2,\mu,d]} \frac{\Psi}{1+m\Psi}\right]   \quad \hbox{for all}\;\; \ell\geq 1.
\end{equation}
By elliptic regularity, it follows from \eqref{4.14} that $\Psi\in\mathscr{W}_1$ and that it provides us with a positive solution of
\begin{equation*}
\left\{
\begin{array}{ll}
\left(\mf{L}_1+\frac{b\t_{[\mf{L}_2,\mu,d]}}{1+m\Psi}\right)\Psi=\v_0(\mu)\Psi &\quad \hbox{in}\;\;\O,\\[1ex]
\mf{B}_1\Psi =0 &\quad \hbox{on}\;\; \p\O.
\end{array}
\right.
\end{equation*}
Therefore, $\Psi\gg_1 0$ and, by the uniqueness of $\s_0$, it becomes apparent that
\begin{equation}
\label{4.15}
\v_0(\mu)=\s_0\left[\mf{L}_1+\tfrac{b\t_{[\mf{L}_2,\mu,d]}}{1+m\Psi},\mf{B}_1,\O\right].
\end{equation}
On the other hand, since $bm\gneq0$,
$$
  \left(1-\chi_{_{\mathrm{int\,supp}\,m}}\right)b \lneq \tfrac{b}{1+m\Psi}\quad \hbox{in}\;\; \O,
$$
it follows from Theorem \ref{th2.1} and the definition of $\v_0(\mu)$ that, for every $\mu>\s_{0,2}$,
$$
  \v_0(\mu):= \s_0\left[\mf{L}_1+\left(1-\chi_{_{\mathrm{int\,supp}\,m}}\right)
b\t_{[\mf{L}_2,\mu,d]},\mf{B}_1,\O\right] < \s_0\left[\mf{L}_1+\tfrac{b\t_{[\mf{L}_2,\mu,d]}}{1+m\Psi},\mf{B}_1,\O\right].
$$
As this estimate contradicts \eqref{4.15}, \eqref{4.11} fails. Therefore, \eqref{iv.6} holds. This ends the proof of Part (a).
\par
To complete the proof of Part (b), suppose that $\l_*=\Phi(\mu)$. Then, since $\v_0(\mu)<\Phi(\mu)$
for all $\mu>\s_{0,2}$, it follows from Part (a) that there exists a constant $C>0$ such that
\begin{equation}
\label{4.16}
  \|w_{n}\|_\infty \leq C \quad \hbox{for all}\;\; n \geq 1.
\end{equation}
In such case, adapting the previous compactness arguments, it becomes apparent that there exist
$\Psi \in\mathscr{W}_1$, with $\Psi\geq 0$, and a subsequence of $\{w_n\}_{n\geq 1}$,
relabeled by $n_\ell$, $\ell\geq 1$, such that \eqref{4.13} holds. Thus, since $\l_*=\Phi(\mu)$, necessarily
\begin{equation}
\label{4.17}
\left\{
\begin{array}{ll}
\left(\mf{L}_1+\frac{b\t_{[\mf{L}_1,\mu,d]}}{1+m\Psi}\right)\Psi=\Phi(\mu)\Psi & \quad \hbox{in}\;\;\O,\\[1ex]
\mf{B}_1\Psi=0 & \quad \hbox{on}\;\;\p\O. \end{array} \right.
\end{equation}
Suppose that $\Psi\gneq0$. Then, $\Psi\gg_1 0$ and \eqref{4.17} implies that
\begin{equation}
\label{4.18}
  \Phi(\mu)=\s_0\left[\mf{L}_1+\tfrac{b\t_{[\mf{L}_1,\mu,d]}}{1+m\Psi},\mf{B}_1,\O\right].
\end{equation}
On the other hand, by Theorem \ref{th2.2} and the definition of $\Phi(\mu)$, we have that
$$
 \Phi(\mu) \equiv \s_0[ \mf{L}_1+b \t_{[\mf{L}_2,\mu,d]},\mf{B}_1,\O\Big]>
 \s_0\left[\mf{L}_1+\tfrac{b\t_{[\mf{L}_1,\mu,d]}}{1+m\Psi},\mf{B}_1,\O\right],
$$
which contradicts \eqref{4.18}. Thus, $\Psi=0$ and hence,
\begin{equation}
\label{4.19}
  \lim_{\ell\to\infty} w_{n_\ell} = 0 \quad \hbox{in}\;\; \mc{C}_{\mf{B}_1}^1(\bar\O).
\end{equation}
As this argument can be repeated along any subsequence of $\{w_n\}_{n\geq 1}$,  Part (b) holds.
This ends the proof of the lemma.
\end{proof}

Particularizing \eqref{3.8} at $\e=0$ yields to
\begin{equation}
\label{4.20}
\mf{F}_0(\l,\mu,w,v)\equiv\mf{F}(\l,\mu,0,w,v):=\left( \begin{array}{c}
		w- (\mf{L}_1+e)^{-1}\left[ (\l+e)w- b \frac{wv}{1+mw} \right] \\[7pt]
		v - (\mf{L}_2+e)^{-1} \left[ (\mu+e)v-dv^2 \right] \end{array} \right).
\end{equation}
As the $v$-component component of \eqref{4.20} vanishes at $v=\t_{[\mf{L}_2,\mu,d]}$, it becomes apparent
that $w$ is a positive solution of \eqref{4.1} if, and only if, the $w$-component of
$$
\mf{F}_0(\l,\mu,w,\t_{[\mf{L}_2,\mu,d]}):=\left( \begin{array}{c}
		w- (\mf{L}_1+e)^{-1}\left[ (\l+e)w- b \t_{[\mf{L}_2,\mu,d]} \frac{w}{1+mw} \right] \\[7pt]
		0 \end{array} \right),
$$
vanishes. Naturally, this is also a rather direct consequence of \eqref{4.1}. Therefore, when applying
Theorem \ref{th3.2} to \eqref{1.8} at $\e=0$ it becomes apparent that $v(s) = \t_{[\mf{L}_2,\mu,d]}$
and that $\mf{F}_0(\l(s),\mu,w(s),\t_{[\mf{L}_2,\mu,d]})=0$
for all $s \in(-\d,\d)$. Moreover, particularizing \eqref{3.9}
at $\e=0$ provides us with
\begin{equation}
\label{4.21}
		\l'(0)= - \int_\O b\t_{[\mf{L}_2,\mu,d]} w_1^2 w_1^* <0.
\end{equation}
Therefore, there is a bifurcation to positive solutions of \eqref{4.1} from $(w,v)=(0,\t_{[\mf{L}_2,\mu,d]})$ at $\l=\Phi(\mu)$ and
the bifurcation is subcritical, because of \eqref{4.21}, or \eqref{4.3}. Naturally, this entails the existence of an $\e_0>0$ such that $\l'(0)<0$ in \eqref{1.7} if $|\e|<\e_0$.
\par
Subsequently, we denote by $\mathscr{S}_0$ the set of nontrivial solutions of \eqref{4.1} defined by
$$
\mathscr{S}_0:=\{(\l,\mu,w,\t_{[\mf{L}_2,\mu,d]})\in\mf{F}_0^{-1}(0)\,:\,w\neq0\}
\cup\{(\l,\mu,0,\t_{[\mf{L}_2,\mu,d]})\;:\;\l\in\Sigma(\mathscr{L}(\l,0))\},
$$
where $\Sigma(\mathscr{L}(\l,0))$ stands for the generalized spectrum of the Fredholm curve
$\mathscr{L}(\l,0)$ introduced in the proof of Theorem \ref{th3.2}. And
$\mathscr{C}_0^+$ stands for the subcomponent of positive solutions of $\mathscr{S}_0$ such that
$(\Phi(\mu),\mu,0,\t_{[\mf{L}_2,\mu,d]})\in \bar{\mathscr{C}}_0^+$. The next result provides us with some
useful properties of $\mathscr{C}_0^+$. We are denoting by
$\mc{P}_\l$ the $\l$-projection operator,
$$
  \mc{P}_\l(\l,\mu,w,\t_{[\mf{L}_2,\mu,d]})=\l.
$$

\begin{theorem}
\label{th4.1}
The component $\mathscr{C}_0^+$ satisfies
\begin{equation}
\label{4.22}
  \mc{P}_\l (\mathscr{C}_0^+)=(\v_0(\mu),\Phi(\mu)).
\end{equation}
Moreover, for every sequence of positive solutions
in $\mathscr{C}_0^+$, $\{(\l_n,\mu,w_n,\t_{[\mf{L}_2,\mu,d]})\}_{n\geq 1}$, such that $\lim_{n\to\infty}\l_n=\v_0(\mu)$, necessarily
\begin{equation}
\label{4.23}
 \lim_{n\to \infty} \|w_n\|_\infty =+\infty.
\end{equation}
In other words, $\mathscr{C}_0^+$ is unbounded at $\l=\v_0(\mu)$.
\end{theorem}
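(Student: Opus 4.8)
The plan is to apply global bifurcation theory of Rabinowitz type to the curve $\mathscr{C}_0^+$ emanating from $(\Phi(\mu),\mu,0,\t_{[\mf{L}_2,\mu,d]})$, and then use Lemma \ref{le4.2} together with the a priori bounds encoded in Lemma \ref{le4.1} to pin down the exact $\l$-projection. First I would invoke the global alternative: since $\mf{F}_0$ is a compact perturbation of the identity, real analytic near the first quadrant, and the bifurcation at $\l=\Phi(\mu)$ is from a simple eigenvalue with $\l'(0)<0$ (subcritical, by \eqref{4.21}), the component $\mathscr{C}_0^+$ of positive solutions is either unbounded in $\R\times\mathscr{W}_1$, or it returns to the trivial branch $\{(\l,\mu,0,\t_{[\mf{L}_2,\mu,d]})\}$ at another value $\l\in\Sigma(\mathscr{L}(\l,0))$. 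The latter possibility is excluded: by Lemma \ref{le4.2}(b), the only way a sequence of positive solutions in $\mathscr{C}_0^+$ can have $\|w_n\|_\infty\to 0$ is if the limiting $\l$ equals $\Phi(\mu)$, which is the original bifurcation point; and the spectrum $\Sigma(\mathscr{L}(\l,0))$ consists of the values where $\s_j[\mf{L}_1+b\t_{[\mf{L}_2,\mu,d]},\mf{B}_1,\O]=\l$ — all of which are $\geq\Phi(\mu)$ — so no secondary return to the trivial branch is geometrically compatible with the subcritical direction and with \eqref{4.3}, which confines positive solutions to $\l<\Phi(\mu)$. Hence $\mathscr{C}_0^+$ is unbounded.

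Next I would translate unboundedness into the statement \eqref{4.22}. By Lemma \ref{le4.1}, along $\mathscr{C}_0^+$ one always has $\v_0(\mu)<\l<\Phi(\mu)$, so $\mc{P}_\l(\mathscr{C}_0^+)\subseteq(\v_0(\mu),\Phi(\mu))$; in particular the $\l$-component is bounded, and therefore unboundedness of $\mathscr{C}_0^+$ must come from the $w$-component, i.e.\ there is a sequence $(\l_n,\mu,w_n,\t_{[\mf{L}_2,\mu,d]})\in\mathscr{C}_0^+$ with $\|w_n\|_\infty\to+\infty$. By Lemma \ref{le4.2}(a), any convergent subsequence of $\{\l_n\}$ must converge to $\v_0(\mu)$; since $\{\l_n\}$ lies in the bounded interval $(\v_0(\mu),\Phi(\mu))$, the whole sequence satisfies $\l_n\to\v_0(\mu)$. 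Thus $\v_0(\mu)\in\overline{\mc{P}_\l(\mathscr{C}_0^+)}$. Combined with the bifurcation point $\Phi(\mu)\in\overline{\mathscr{C}_0^+}$ and the connectedness of $\mathscr{C}_0^+$ (hence of its continuous image $\mc{P}_\l(\mathscr{C}_0^+)$, an interval), we conclude that $\mc{P}_\l(\mathscr{C}_0^+)$ is an interval whose closure is $[\v_0(\mu),\Phi(\mu)]$; since the endpoints themselves correspond to degenerate limits (blow-up at $\v_0(\mu)$, the trivial branch at $\Phi(\mu)$) and not to genuine positive solutions, $\mc{P}_\l(\mathscr{C}_0^+)=(\v_0(\mu),\Phi(\mu))$, which is \eqref{4.22}.

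Finally, for the last assertion \eqref{4.23}: let $\{(\l_n,\mu,w_n,\t_{[\mf{L}_2,\mu,d]})\}$ be any sequence in $\mathscr{C}_0^+$ with $\l_n\to\v_0(\mu)$. If $\|w_n\|_\infty$ did not tend to $+\infty$, it would have a bounded subsequence; then by the compactness of $(\mf{L}_1+e)^{-1}$ one extracts a further subsequence $w_{n_\ell}\to\Psi$ in $\mc{C}^1_{\mf{B}_1}(\bar\O)$, and by the necessity direction of Lemma \ref{le4.2}(b), $\Psi$ cannot be $0$ (else $\v_0(\mu)=\Phi(\mu)$, contradicting $\v_0(\mu)<\Phi(\mu)$ since $bm\gneq0$); so $\Psi\gneq0$ solves \eqref{4.1} with $\l=\v_0(\mu)$, whence $\Psi\gg_1 0$ and $\v_0(\mu)=\s_0[\mf{L}_1+\tfrac{b\t_{[\mf{L}_2,\mu,d]}}{1+m\Psi},\mf{B}_1,\O]$, which by the strict monotonicity of $\s_0$ (Theorem \ref{th2.1}) and the strict inequality $(1-\chi_{_{\mathrm{int\,supp}\,m}})b\lneq\tfrac{b}{1+m\Psi}$ is strictly larger than $\v_0(\mu)$ — a contradiction. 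Hence $\|w_n\|_\infty\to+\infty$, proving $\mathscr{C}_0^+$ is unbounded at $\l=\v_0(\mu)$.

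The main obstacle I anticipate is rigorously excluding the ``return to the trivial branch'' alternative in the global bifurcation dichotomy: one must be careful that the component $\mathscr{C}_0^+$, defined as the maximal subcontinuum of \emph{positive} solutions, does not meet the trivial branch at a higher eigenvalue $\s_j$, and the cleanest way is to combine the a priori confinement $\l<\Phi(\mu)$ from \eqref{4.3} with the asymptotic dichotomy of Lemma \ref{le4.2} — every limiting $\l$ value of a positive-solution sequence is forced to be one of the two endpoints $\v_0(\mu)$ or $\Phi(\mu)$ — so the only escape to infinity is via $\|w\|_\infty\to\infty$ at $\l=\v_0(\mu)$. Verifying that this reasoning genuinely rules out internal blow-up of $\l$ or accumulation at an interior $\l_*$ with bounded $w$ (impossible by elliptic regularity and uniqueness of $\s_0$, as in Lemma \ref{le4.2}) is the technical heart of the argument.
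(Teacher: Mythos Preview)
Your proposal is correct and follows essentially the same route as the paper: global bifurcation gives an unbounded component $\mathscr{C}_0^+$, Lemma \ref{le4.1} confines $\l$ to $(\v_0(\mu),\Phi(\mu))$ so the unboundedness must occur in $w$, and Lemma \ref{le4.2}(a) then forces $\l\to\v_0(\mu)$ along blow-up sequences, yielding both \eqref{4.22} and \eqref{4.23}. The only differences are cosmetic: the paper invokes a unilateral global bifurcation theorem (\cite[Th.~7.1.3]{LG01}) to obtain unboundedness directly, whereas you work through the Rabinowitz alternative and exclude the return-to-trivial case by hand; and your final paragraph reproving \eqref{4.23} is redundant, since this is exactly the sufficiency direction of Lemma \ref{le4.2}(a), which you may simply cite.
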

\begin{proof}
Owing to Lemma \ref{le4.2}(b), $(\Phi(\mu),\mu,0,\t_{[\mf{L}_2,\mu,d]})$ is the unique bifurcation point to positive solutions from $(\l,\mu,0,\t_{[\mf{L}_2,\mu,d]})$. The existence of $\mathscr{C}_0^+$ follows from Theorem \ref{th3.2} and the Zorn--Kuratowski lemma. By  \cite[Th.7.1.3]{LG01}, $\mathscr{C}_0^+$ is unbounded in $\R^2\times \mc{C}(\bar\O)$. Since $\mu>\s_{0,2}$ is fixed and, due to Lemma \ref{le4.1}, $\l\in (\v_0(\l),\Phi(\mu))$ if  $bm\gneq 0$,
$\mathscr{C}_0^+$ must be unbounded in $w$. Thus, thanks to Lemma \ref{le4.2}(a),
\eqref{4.22} and \eqref{4.23} hold.
\end{proof}

The next result provides us with the fine structure of the component $\mathscr{C}_0^+$ near $\l=\Phi(\mu)$ and $\l=\v_0(\mu)$. It is a pivotal result in getting the  main multiplicity result of this paper for \eqref{1.8}
with sufficiently small $\e>0$.

\begin{theorem}
\label{th4.2}
In a neighborhood of $(\l,\mu,w,\t_{[\mf{L}_2,\mu,d]})=(\Phi(\mu),\mu,0,\t_{[\mf{L}_2,\mu,d]})$ in $\R\times\R\times \mathscr{W}_1\times \{\t_{[\mf{L}_2,\mu,d]}\}$, $\mathscr{C}_0^+$ consists
of the analytic curve $(\l(s),\mu,w(s),\t_{[\mf{L}_2,\mu,d]})$ given by Theorem \ref{th3.2}. Moreover, the following properties are satisfied:
\begin{enumerate}
\item[{\rm (a)}]  For sufficiently small $r>0$ and every $\l \in [\Phi(\mu)-r,\Phi(\mu))$, \eqref{4.1} has a unique positive solution, which is linearly unstable with one-dimensional unstable manifold.
\par
\item[{\rm (b)}]  There exists $r>0$ such that, for every $\l\in (\v_0(\mu),\v_0(\mu)+r]$, \eqref{4.1} has a unique positive solution, $(\l,\mu,w_\l,\t_{[\mf{L}_2,\mu,d]})$, which is non-degenerate. Thus, for these values of $\l$, $\mathscr{C}_0^+$ consists of an analytic curve of positive solutions bifurcating from $+\infty$ at $\l=\v_0(\mu)$, in the sense that
\begin{equation}
\label{4.24}
  \lim_{\l \da \v_0(\mu)}w_\l(x) =+\infty\quad \hbox{for all}\;\; x\in \O.
\end{equation}
\end{enumerate}
Furthermore, these solutions have local Poincar\'{e} index $-1$, calculated through the Leray--Schauder degree.
\end{theorem}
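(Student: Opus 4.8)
The plan is to combine the local bifurcation analysis of Theorem \ref{th3.2} (at $\varepsilon=0$), the a priori estimates of Lemma \ref{le4.2}, and the global structure of $\mathscr{C}_0^+$ from Theorem \ref{th4.1}, adding a uniqueness-by-monotonicity argument at each end of the interval. First, near $\l=\Phi(\mu)$: Theorem \ref{th3.2} with $\varepsilon=0$ produces the analytic curve $(\l(s),\mu,w(s),\t_{[\mf{L}_2,\mu,d]})$, and identity \eqref{4.21} shows $\l'(0)<0$, so the bifurcation is strictly subcritical and transcritical. By the local uniqueness part of Crandall--Rabinowitz, in a neighborhood of $(\Phi(\mu),\mu,0,\t_{[\mf{L}_2,\mu,d]})$ the only positive solutions lie on this curve; since $\l'(0)<0$ and the curve is analytic with $w(s)\gg_1 0$ for $s>0$, for $\l$ slightly below $\Phi(\mu)$ there is exactly one positive solution, and Theorem \ref{th3.3} (exchange of stability, $\l'(0)<0$) gives that it is linearly unstable with one-dimensional unstable manifold. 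This proves (a); the Poincar\'e index $-1$ follows because a nondegenerate solution with exactly one negative eigenvalue of the linearization has Leray--Schauder index $(-1)^1=-1$.

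Next, part (b), the behaviour near $\l=\v_0(\mu)$. By Theorem \ref{th4.1}, $\mathcal{P}_\l(\mathscr{C}_0^+)=(\v_0(\mu),\Phi(\mu))$ and $\|w_n\|_\infty\to+\infty$ along any sequence in $\mathscr{C}_0^+$ with $\l_n\to\v_0(\mu)$; this already gives \eqref{4.24} once uniqueness near $\v_0(\mu)$ is established. To get uniqueness and nondegeneracy for $\l\in(\v_0(\mu),\v_0(\mu)+r]$, I would argue as follows. If $w$ solves \eqref{4.1}, then by \eqref{4.2}, $\l=\s_0[\mf{L}_1+\tfrac{b\t_{[\mf{L}_2,\mu,d]}}{1+mw},\mf{B}_1,\O]$. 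As $\l\downarrow\v_0(\mu)$, forcing this principal eigenvalue to its infimum $\v_0(\mu)$ forces $\tfrac{b\t_{[\mf{L}_2,\mu,d]}}{1+mw}\to(1-\chi_{\mathrm{int\,supp}\,m})b\t_{[\mf{L}_2,\mu,d]}$ in an $L^\infty$-monotone sense, i.e. $w\to+\infty$ uniformly on $\mathrm{int\,supp}\,m$; combined with Lemma \ref{le4.2}(a) and elliptic estimates one shows $w_\l/\|w_\l\|_\infty$ converges to the principal eigenfunction $\psi$ of \eqref{iv.9}. To upgrade convergence to \emph{uniqueness}, I would linearize \eqref{4.1} at a large positive solution: the linearized operator is $\mf{L}_1+\tfrac{b\t_{[\mf{L}_2,\mu,d]}}{(1+mw)^2}-\l$, and since $\tfrac{b\t_{[\mf{L}_2,\mu,d]}}{(1+mw)^2}\lneq\tfrac{b\t_{[\mf{L}_2,\mu,d]}}{1+mw}$ wherever $mw>0$, Theorem \ref{th2.1} gives $\s_0[\mf{L}_1+\tfrac{b\t_{[\mf{L}_2,\mu,d]}}{(1+mw)^2},\mf{B}_1,\O]<\l$, hence the principal eigenvalue of the linearization is strictly negative and the solution is nondegenerate; moreover the higher eigenvalues are, for $\l$ close to $\v_0(\mu)$, strictly positive (they are bounded below away from zero since the zeroth-order potential stays bounded in $L^\infty$ on a sequence obtained by normalizing), so the Morse index is exactly $1$, giving local index $-1$ and, by the implicit function theorem, local uniqueness of the solution branch bifurcating from infinity. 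A standard blow-up/compactness contradiction then rules out any second large positive solution for $\l$ near $\v_0(\mu)$: two distinct solutions $w^{(1)}<w^{(2)}$ would, after normalization, both converge to $\psi$, but a sliding/sub-supersolution comparison using Theorem \ref{th2.3}(c) and the strict monotonicity of $r\mapsto \tfrac{r}{1+mr}$ forces them to coincide. This yields the unique analytic branch $(\l,\mu,w_\l,\t_{[\mf{L}_2,\mu,d]})$ with \eqref{4.24}.

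The main obstacle is the uniqueness claim in part (b): unlike the end $\l\sim\Phi(\mu)$, where Crandall--Rabinowitz hands us local uniqueness for free, at the bifurcation-from-infinity end one must rule out coexisting large solutions, and the nonlinearity $\tfrac{bv}{1+mw}$ is only monotone, not convex or concave, in $w$, so the classical sweeping/sub-supersolution uniqueness machinery does not apply verbatim. My strategy is to exploit that, after the rescaling $w=\sigma\,\omega$ with $\sigma=\|w\|_\infty\to\infty$, the problem degenerates: on $\mathrm{int\,supp}\,m$ the term $\tfrac{b\t_{[\mf{L}_2,\mu,d]}\sigma\omega}{1+m\sigma\omega}$ converges to the \emph{bounded} limit $0$ relative to $\sigma$ while on $\O_0=\mathrm{int}\,m^{-1}(0)$ it stays $b\t_{[\mf{L}_2,\mu,d]}\sigma\omega$, so the limiting problem \eqref{iv.9} is linear with simple principal eigenvalue $\v_0(\mu)$; then a quantitative version of the implicit function theorem around this simple eigenvalue (in the spirit of bifurcation from infinity, \`a la Rabinowitz, as used e.g. in \cite[Ch. 6]{LG01}) gives both the analytic branch and its local uniqueness, and nondegeneracy comes from the strict eigenvalue inequality above. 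The routine parts — elliptic regularity, extraction of convergent subsequences, the index computation $(-1)^{\mathrm{Morse}}$ — I would carry out exactly as in the proof of Lemma \ref{le4.2}.
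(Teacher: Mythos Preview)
Your treatment of part (a), and of the nondegeneracy and Morse-index-$1$ computation in part (b), are correct and essentially coincide with the paper. The gap is in the \emph{uniqueness} step near $\l=\v_0(\mu)$.

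Both routes you propose for uniqueness are problematic. The sliding/comparison idea does not go through: two positive solutions of \eqref{4.1} need not be ordered a priori, Theorem \ref{th2.3}(c) concerns the logistic equation rather than \eqref{4.1}, and, more fundamentally, the map $w\mapsto \l-\tfrac{b\t_{[\mf{L}_2,\mu,d]}}{1+mw}$ is \emph{increasing} in $w$, which is the wrong monotonicity for a sub/supersolution uniqueness argument. The bifurcation-from-infinity strategy is plausible in spirit, but the rescaled potential $\tfrac{b\t_{[\mf{L}_2,\mu,d]}}{1+m\sigma\omega}$ converges to the discontinuous limit $(1-\chi_{_{\mathrm{int\,supp}\,m}})b\t_{[\mf{L}_2,\mu,d]}$ only pointwise (equivalently in $L^p$ for finite $p$), not in $L^\infty$; an implicit function theorem at this singular limit is not covered by the standard machinery and would require substantial additional work that you have not outlined.

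The paper sidesteps this entirely via the Leray--Schauder degree. From part (a) one has a single nondegenerate solution of index $-1$ for $\l$ just below $\Phi(\mu)$, and Lemma \ref{le4.2}(a) gives uniform a priori bounds on the set of positive solutions for $\l$ in any interval $[\tilde\l,\Phi(\mu))$ with $\tilde\l>\v_0(\mu)$. Homotopy invariance and excision then force the \emph{total} degree of $\mf{F}_0(\l,\mu,\cdot)$ over that solution set to equal $-1$ for \emph{every} $\l\in(\v_0(\mu),\Phi(\mu))$; this is \eqref{4.25}. Now your own linearization argument shows that every positive solution for $\l$ close to $\v_0(\mu)$ is nondegenerate with Morse index $1$, hence has local index $-1$. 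Were there two or more such solutions, additivity of the degree would make the total degree $\leq -2$, contradicting \eqref{4.25}. Uniqueness is thus forced by the global degree identity, with no comparison principle and no implicit function theorem at infinity required.
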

\begin{proof}
According to \eqref{4.21}, $\mathscr{C}_0^+$ bifurcates subcritically from $w=0$ at $\l=\Phi(\mu)$.
Combining this feature together with the uniqueness of the bifurcated curve in Theorem \ref{th3.2} and
Lemma \ref{le4.2} (b), it becomes apparent the existence of a $r >0$ such that \eqref{4.1} has a unique solution for each $\l\in[\Phi(\mu)-r,\Phi(\mu))$. The fact that $\mathscr{C}_0^+$ is analytic for $\l$
sufficiently close to $\Phi(\mu)$ is a byproduct of Theorem \ref{th3.2}, since $\mathscr{C}_0^+$ can be
parameterized by $\l$,  and $\mf{F}$, or  $\mf{F}_0$, is an analytic function of $\l$. Furthermore, since
$\l'(0)<0$, it follows from Theorem \ref{th3.3} that, for sufficiently small $r>0$ and every
$\l\in[\Phi(\mu)-r,\Phi(\mu))$, the positive solution $(\l,\mu,w(\l))$ is linearly unstable with one-dimensional unstable manifold. In particular, by the Schauder formula, its local index as a fixed point of the compact operator $I-\mf{F}_0$ equals $-1$.
\par
On the other hand, by Lemma \ref{le4.2} (a), for every $\l\in(\v_0(\mu),\Phi(\mu))$, there exists $M_{\l}>0$ such that any positive solution, $(\tilde \l,\tilde w)$,  of \eqref{4.1} with  $\tilde \l \in [\l,\Phi(\mu))$ satisfies
$$
    \tilde w \in W_{\l}:=\left\{w\in \mathscr{W}_1 \,:\,0<\|w\|_{\infty}<M_{\l}\right\}.
$$
Thus, combining the homotopy invariance with the excision property of the Leray--Schauder degree, it becomes apparent that
$$
  \mathrm{Deg\,} (\mf{F}_0(\tilde \l,\mu,\cdot),W_{\l})=
  \mathrm{Deg\,} (\mf{F}_0(\Phi(\mu)-\tfrac{r}{2},\mu,\cdot),W_{\l})=-1
$$
for all $\tilde \l\in [\l,\Phi(\mu))$. In particular,
\begin{equation}
\label{4.25}
\mathrm{Deg\,}\,(\mf{F}_0(\l,\mu,\cdot,),W_{\l})=-1\quad\hbox{for all}\;\; \l\in(\v_0(\mu),\Phi(\mu)).
\end{equation}
Therefore, for every $\l\in(\v_0(\mu),\Phi(\mu))$, the total sum of the local Poincar\'{e} indices of the \eqref{4.1} positive solutions, calculated through the Leray--Schauder degree, equals $-1$.
\par
Subsequently,  we will carry out the (sharp) analysis of $\mathscr{C}_0^+$ in a neighborhood of $\l=\v_0(\mu)$. Let $\{(\l_n,w_n)\}_{n\geq1}$ be a sequence of positive solutions of $\mathscr{C}_0^+$ such that
\begin{equation}
\label{4.26}
  \lim_{n\to+\infty}\l_n=\v_0(\mu).
\end{equation}
Then, by Lemma \eqref{le4.2} (a), we already know that
$$
   \lim_{n\to\infty}\|w_n\|_{\infty}=+\infty.
$$
Note that, in particular, this implies that $\lim_{n\to \infty}M_{\l_n}=+\infty$. Moreover, according to
the proof of Lemma \eqref{le4.2} (a),  there exists a subsequence, labeled again by $n$, such that
\[
   \lim_{n\to+\infty}\frac{w_{n}}{||w_{n}||_{\infty}}=\psi
\]
for some $\psi\in\mathscr{W}_1$ solving \eqref{iv.9}. Since $\psi\gg_1 0$ is a principal eigenfunction associated with $\v_0(\mu)$, it becomes apparent that
\begin{equation}
\label{4.27}
\lim_{n\to+\infty}w_n(x)=+\infty\quad\hbox{for all}\;\; x\in\O.
\end{equation}
As this holds for every sequence of positive solutions, once established the uniqueness of $w_\l$, \eqref{4.24} holds. In order to prove the uniqueness of the positive solution
for $\l$ in a right-neighborhood of $\v_0(\mu)$, we will show that, for sufficiently large $n$, $(\l_n,w_n)$ must be non-degenerate with a one-dimensional unstable manifold. Thanks again to the Schauder formula, this entails that the local index of these positive solutions equals $-1$ and therefore, combining \eqref{4.25} with the additivity property of the Leray--Schauder degree, \eqref{4.1} has a unique positive solution for $\l$ sufficiently close to $\v_0(\l)$, denoted by $(\l,\mu,w_\l,\t_{[\mf{L}_2,\mu,d]})$ in the statement of the theorem. According to \eqref{4.22}, necessarily $(\l,\mu,w_\l,\t_{[\mf{L}_2,\mu,d]})\in\mathscr{C}_0^+$ for $\l\sim \v_0(\mu)$.
\par
The spectrum of the linearization of $\mf{F}_0$ at $(\l_n,\mu,w_n,\t_{[\mf{L}_2,\mu,d]})$ is given by the eigenvalues of the boundary value problem
\begin{equation*}
\left\{
\begin{array}{lll}
\left(\mf{L}_1 +b\dfrac{\t_{[\mf{L}_2,\mu,d]}}{(1+m w_n)^2}-\l_n\right)w=\tau w &\quad \hbox{in}\;\;\Omega,\\[10pt]
\mf{B}_1 w=0 &\quad\hbox{on}\;\;\partial\Omega.
\end{array}
\right.
\end{equation*}
Since $1+m w_n\gneq 1$ for all $n\geq1$, it follows from Theorem \ref{th2.1} and the identity \eqref{4.2} applied to $(\l,w)=(\l_n,w_n)$ that, for every $n\geq 1$,
\begin{equation}
\label{4.28}
\tau_0(n)\equiv  \s_0\left[ \mf{L}_1+b \tfrac{\t_{[\mf{L}_2,\mu,d]}}{(1+mw_n)^2}-\l_n,\mf{B}_1,\O\right]
< \s_0\left[ \mf{L}_1+b \tfrac{\t_{[\mf{L}_2,\mu,d]}}{1+mw_n},\mf{B}_1,\O\right]-\l_n=0.
\end{equation}
On the other hand, it follows from  \eqref{4.27} that
\[
   \lim_{n\to+\infty}\dfrac{b \t_{[\mf{L}_2,\mu,d]}}{(1+mw_n)^2}=
   \left(1-\chi_{_{\mathrm{int\,supp}\,m}}\right) b \t_{[\mf{L}_2,\mu,d]}.
\]
Thus, thanks to \eqref{3.19} and \eqref{4.26}, by letting $n\to \infty$ in \eqref{4.28}, we find that \begin{equation}
\label{4.29}
\lim_{n\to+\infty}\tau_0(n)=\v_0(\mu)-\v_0(\mu)=0,
\end{equation}
though, due to \eqref{4.28}, $\tau_0(n)<0$ for all $n\geq1$.
Similarly, by the strict dominance of the principal eigenvalues, any other eigenvalue, say $\tau_j(n)$, $j\geq 1$, satisfies
\begin{equation*}
\lim_{n\to\infty} \mathrm{Re\,}\tau_j(n)=\mathrm{Re\,}\s_j\left[ \mf{L}_1+\left(1-\chi_{_{\mathrm{int\,supp}\,m}}\right)
 b\t_{[\mf{L}_2,\mu,d]},\mf{B}_1,\O\right]-\v_0(\mu)>0.
\end{equation*}
Therefore, there exists $r>0$ such that any positive solution, $(\l,w)$, of \eqref{4.1}
with $\l\in (\v_0(\mu),\v_0(\mu)+r]$ is non-degenerate with one-dimensional unstable manifold.
This ends the proof.
\end{proof}

Figure \ref{Fig3} shows an admissible component $\mathscr{C}_0^+$ of positive solutions of
\eqref{4.1} respecting Theorems \ref{th4.1} and \ref{th4.2}. Although \eqref{4.1} has a unique positive solution for $\l$ sufficiently close to either
$\Phi(\mu)$, or $\v_0(\mu)$, the problem might possess an arbitrarily large number of positive solutions
for some intermediate range of values of the parameter $\l$, as illustrated in Figure \ref{Fig3}. Actually,
besides $\mathscr{C}_0^+$, \eqref{4.1} might have some additional component of positive solutions not plotted in the figure. In spite of all these circumstances, thanks to Theorems \ref{th4.1} and \ref{th4.2}, near the ends of the existence interval, $(\v_0(\mu),\Phi(\mu))$, the unique positive solution of \eqref{4.1} must be unstable with one-dimensional unstable manifold. It remains an open problem in this paper to analyze the fine structure of the global bifurcation diagram.

\begin{figure}[ht!]
\centering
\includegraphics[scale=0.1]{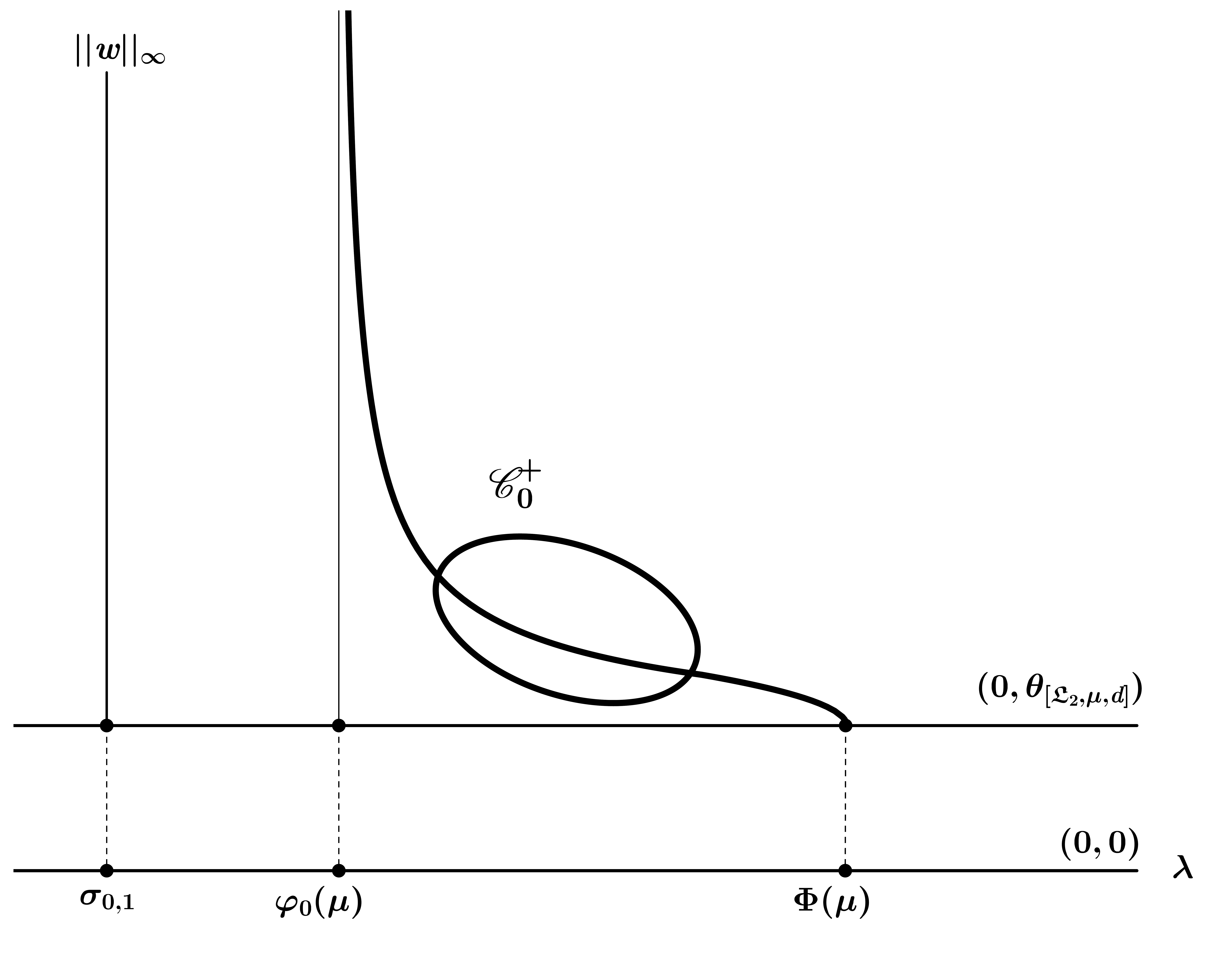}
\caption{An admissible component $\mathscr{C}_0^+$ in case $bm\gneq 0$}
\label{Fig3}
\end{figure}

\section{An optimal multiplicity result for the original model}
\label{sec5}

The next multiplicity result is the main theorem of this section. Remember that, owing to
Theorem \ref{th3.4}, for every $\mu>\s_{0,2}$, \eqref{1.7} has a coexistence state if $\l>\Phi(\mu)$. Moreover, in such case, $\l>\v_\e(\mu)$, because $\Phi(\mu)>\v_\e(\mu)$.

\begin{theorem}
\label{th5.1}
Fix  $\l^*\in(\v_0(\mu),\Phi(\mu))$. Then, there exists $\e_0\equiv \e_0(\l^*)>0$ such that, for every $\e\in(0,\e_0)$, \eqref{1.7} possesses a component $\mathscr{C}_{\e}^+$ of coexistence states satisfying the following properties:
\begin{enumerate}
\item[{\rm (a)}] $\mc{P}_{\l}\left(\mathscr{C}_{\e}^+\right)=[\l_T,+\infty)$ for some $\l_T\equiv\l_T(\e)\in (\v_\e(\mu),\l^*)$.
\item[{\rm (b)}] For every $\l\in [\l^*,\Phi(\mu))$, \eqref{1.7} has, at least, two
(different) coexistence states.
\item[{\rm (c)}] $\mathscr{C}_{\e}^+$ is an analytic $\l$-curve in a neighborhood of
$(\l,\mu,w,v)=(\Phi(\mu),\mu,0,\t_{[\mf{L}_2,\mu,d]})$.
\end{enumerate}
\end{theorem}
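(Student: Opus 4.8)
The plan is to obtain the component $\mathscr{C}_\e^+$ for small $\e>0$ as a perturbation of the limiting component $\mathscr{C}_0^+$ produced in Theorems \ref{th4.1}--\ref{th4.2}, combining the local bifurcation analysis of Section \ref{sec3} near $\l=\Phi(\mu)$ with a global continuation/degree argument to push the curve past $\l^*$ and out to $\l=+\infty$. First I would fix $\l^*\in(\v_0(\mu),\Phi(\mu))$ and invoke Lemma \ref{le4.2}(a) together with Theorem \ref{th4.1} to fix a compact interval $[\l^*,\Phi(\mu)]$ on which the $\e=0$ coexistence states (equivalently, positive solutions of \eqref{4.1}) enjoy uniform a priori bounds $0<\|w\|_\infty\le M$; crucially, the a priori bounds of \cite{LGMH20} applied to \eqref{1.7} are uniform on compact $\l$-subintervals and uniform in $\e\in[0,1]$, which is what makes the perturbation legitimate. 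Then, shrinking $r>0$ from Theorem \ref{th4.2} if needed, I would record that at $\l=\Phi(\mu)-\tfrac r2$ the Leray--Schauder degree of $\mf{F}_0$ on the admissible ball $W_{\l^*}$ equals $-1$ (formula \eqref{4.25}), while near $\l=\Phi(\mu)$ the unique small positive solution of \eqref{4.1} has local index $-1$ and the curve is subcritical by \eqref{4.21}.

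Next I would set up the perturbed picture for $\e>0$. By Remark \ref{re3.2} the Crandall--Rabinowitz curve $(\l(s,\e),w(s,\e),v(s,\e))$ from $(\Phi(\mu),0,\t_{[\mf{L}_2,\mu,d]})$ depends analytically on $\e$, and since $\l'(0,0)<0$ (by \eqref{4.21}) there is $\e_1>0$ with $\l'(0,\e)<0$ for $|\e|<\e_1$; hence for such $\e$ the bifurcation from $(0,\t_{[\mf{L}_2,\mu,d]})$ is still subcritical, giving assertion (c) and, via Theorem \ref{th3.3}, a small unstable coexistence state with index $-1$ for $\l\lesssim\Phi(\mu)$. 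Let $\mathscr{C}_\e^+$ be the component of the coexistence-state set of \eqref{1.7} containing this local branch. The degree $-1$ on $W_{\l^*}$ persists for small $\e$: using the uniform a priori bounds I would fix $M$ so large that on $\partial W$ (with $W=\{0<\|w\|_\infty<M\}\times\{0<\|v-\t_{[\mf{L}_2,\mu,d]}\|<M\}$, say) the map $\mf{F}(\l,\mu,\e,\cdot)$ is zero-free for all $\e\in[0,\e_0]$ and all $\l$ in a neighborhood of $[\l^*,\Phi(\mu)-\tfrac r2]$; then $\mathrm{Deg}(\mf{F}(\l,\mu,\e,\cdot),W)=\mathrm{Deg}(\mf{F}_0(\l,\mu,\cdot),W_{\l^*})=-1$ by homotopy invariance in $\e$. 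Consequently, for each $\l\in[\l^*,\Phi(\mu))$ and $\e\in(0,\e_0)$, \eqref{1.7} has a coexistence state with nonzero degree inside $W$; subtracting it from the small branch near $\Phi(\mu)$ and using that the two have opposite-signed or differing local indices yields the \emph{second} coexistence state, proving (b). Finally, global bifurcation (the unilateral global alternative, \cite[Th. 7.1.3]{LG01}, together with the a priori lower curve $\v_\e(\mu)$ of Theorem \ref{th3.4} and the fact that coexistence states of \eqref{1.7} exist for \emph{all} $\l>\Phi(\mu)$ by Theorem \ref{th3.4}) forces $\mathscr{C}_\e^+$ to be unbounded in $\l$ to the right, so $\mc{P}_\l(\mathscr{C}_\e^+)\supseteq[\l^*,+\infty)$; the left turning point $\l_T\in(\v_\e(\mu),\l^*)$ arises because the branch must leave $W$ to the left of $\l^*$ (a priori bounds break down only at $\l=\v_0(\mu)<\v_\e(\mu)$, so the branch cannot reach that far and instead bends), giving (a).

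The main obstacle, and the step that needs the most care, is making the perturbation from $\e=0$ to $\e>0$ genuinely uniform: the limiting problem \eqref{1.8} is uncoupled and degenerate (the $w$-equation becomes \eqref{4.1}, whose positive solutions blow up as $\l\da\v_0(\mu)$), so one must verify that (i) the a priori bounds from \cite{LGMH20} hold uniformly in $\e$ on compact $\l$-intervals away from $\v_0(\mu)$, (ii) the degree on the fixed open set $W$ is stable under the $\e$-homotopy, i.e. no coexistence state escapes to the boundary of $W$ as $\e\to0$, and (iii) the second solution produced by the degree count is genuinely distinct from the small bifurcating one for \emph{all} $\l\in[\l^*,\Phi(\mu))$, not merely near $\Phi(\mu)$. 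I expect (ii) to be the crux: it requires ruling out that the "large" solution on $\mathscr{C}_\e^+$ touches $\|v-\t_{[\mf{L}_2,\mu,d]}\|=0$ (which would collide with the semitrivial set) or $\|w\|_\infty=M$, and this is where the precise non-degeneracy and index $-1$ information of Theorem \ref{th4.2}, propagated to small $\e$, does the work. The connectedness claim — that the two solutions lie on \emph{one} component $\mathscr{C}_\e^+$ reaching $+\infty$ — then follows by the standard Rabinowitz-type global alternative, using that the only candidate bifurcation/blow-up values are $\v_\e(\mu)$ (blow-up from infinity, excluded for $\l>\v_\e(\mu)$) and $\Phi(\mu)$ (the transcritical bifurcation point), so the component cannot be compact and must run off to $\l=+\infty$ where Theorem \ref{th3.4} guarantees coexistence.
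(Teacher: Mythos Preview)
Your outline has the right architecture---perturb $\mathscr{C}_0^+$, use degree, invoke global bifurcation---but the multiplicity step in (b) has a genuine gap, and it propagates to (a).

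You compute $\mathrm{Deg}(\mf{F}(\l,\mu,\e,\cdot),W)=-1$ on a set $W$ large enough to contain \emph{all} coexistence states. But the small bifurcating solution near $(0,\t_{[\mf{L}_2,\mu,d]})$ itself has local index $-1$ (Theorem~\ref{th3.3} plus the Schauder formula), so a total degree of $-1$ is perfectly consistent with there being exactly one coexistence state. Your phrase ``opposite-signed or differing local indices'' is not justified: both candidates have index $-1$. (There is also a technical snag with your $W$: if $W$ requires $\|v-\t_{[\mf{L}_2,\mu,d]}\|>0$ it contains no $\e=0$ solutions, since at $\e=0$ every coexistence state has $v=\t_{[\mf{L}_2,\mu,d]}$; if instead you allow $v=\t_{[\mf{L}_2,\mu,d]}$, then the semitrivial $(0,\t_{[\mf{L}_2,\mu,d]})$ sits on $\p W$ and the degree is undefined.)

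What the paper does instead is construct, via Whyburn's lemma, a \emph{tight} isolating tube $\mc{O}$ around the restricted component $\mathscr{C}^+_{0,[\l_0,\l_1]}$ (with $\l_0<\l^*<\l_1$) which is bounded away from $w=0$ and from any other solution set of \eqref{1.8}; in particular the lateral boundary $\p_L\mc{O}$ is solution-free at $\e=0$. Then a compactness argument (no uniform-in-$\e$ a priori bounds needed---the boundedness of $\mc{O}$ itself does the work) shows that for small $\e>0$ the component $\mathscr{C}_\e^+$ enters $\mc{O}$ through the side cover at $\l_1$ and can only leave through the side cover at $\l_0$. This simultaneously gives $\l_T<\l_0<\l^*$ for (a), and yields (b) in two ways: either (i) the component continues past $\l_0$, is unbounded, and by Lemma~\ref{le5.1} must run to $\l=+\infty$, thus crossing each $\l\in[\l_0,\Phi(\mu))$ a second time outside $\mc{O}$; or (ii) one combines $\mathrm{Deg}(\mf{F}(\l,\mu,\e,\cdot),\mc{O}_\l)=-1$ with a fixed-point-index-in-cones computation showing the \emph{total} coexistence index is $0$, forcing a second solution outside $\mc{O}_\l$. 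The separation furnished by $\mc{O}$ is exactly the missing ingredient in your plan.
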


Naturally, $\mathscr{C}_\e^+$ is the perturbation of the component $\mathscr{C}_0^+$ constructed in
Section 4 as $\e>0$ leaves $\e=0$. It turns out that, as $\e$ perturbs from zero, the component $\mathscr{C}_0^+$ bends backwards towards the right providing us with a perturbed component like
the one sketched in Figure \ref{Fig5}.

\begin{figure}[ht!]
\centering
\includegraphics[scale=0.1]{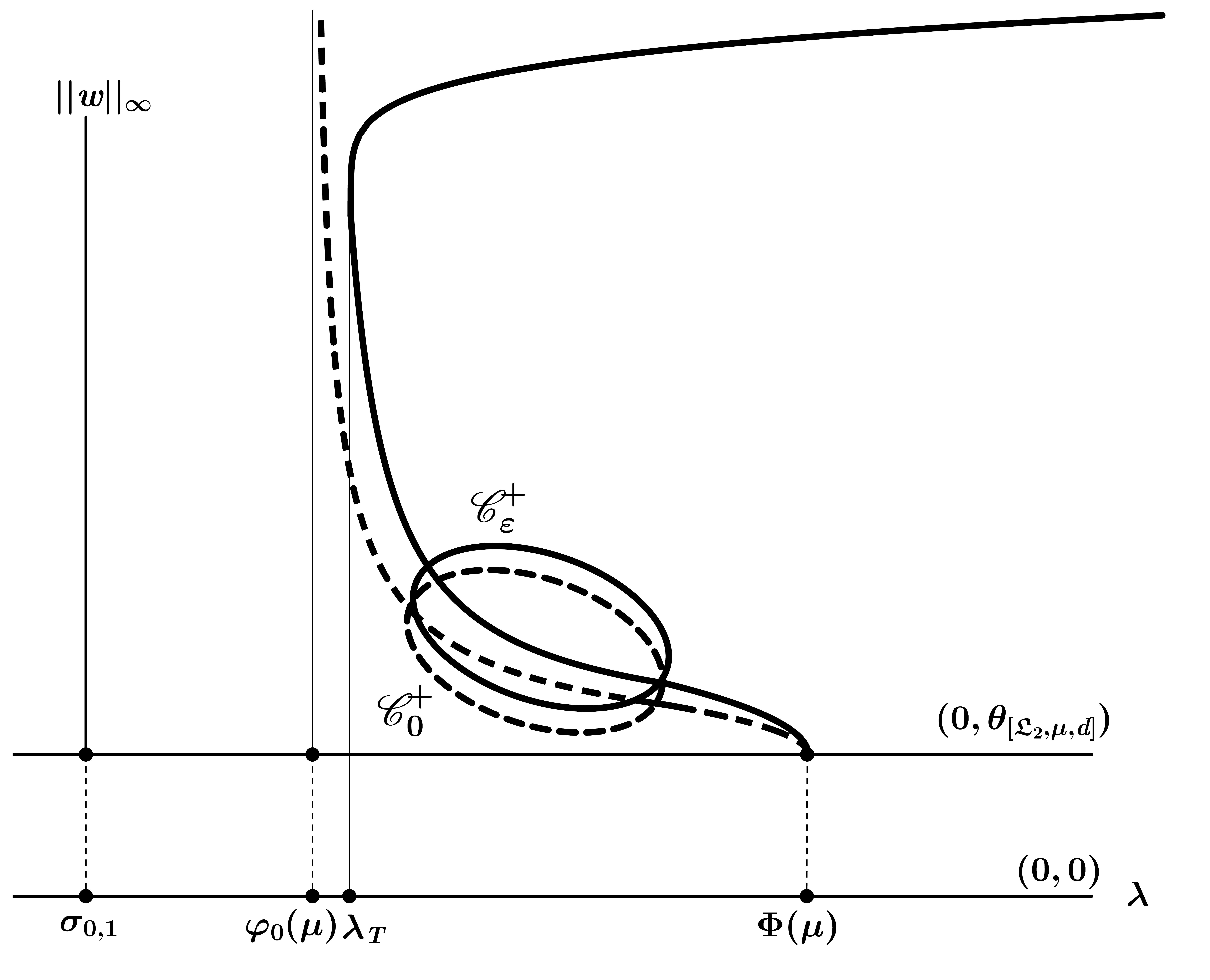}
\caption{The components $\mathscr{C}_0^+$ (dashed line) and $\mathscr{C}_\e^+$ (solid line) for small $\e>0$}
\label{Fig5}
\end{figure}

The proof of Theorem \ref{th5.1} is based on Theorem \ref{th4.2}, Theorem 7.2.2 of  \cite{LG01}, and on the existence of a priori bounds for the coexistence states of \eqref{1.7} established by the following lemma.

\begin{lemma}
\label{le5.1}
Suppose $\e>0$ and let $(w,v)$ be  a coexistence state of \eqref{1.7}. Then,
\begin{equation}
\label{5.1}
0\ll_1w\ll_1\t_{[\mf{L}_1,\l,\e a]},\qquad \t_{[\mf{L}_2,\mu,d]}\ll_2 v\ll_2 \t_{[\mf{L}_2,\mu+\e c\tfrac{\t_{[\mf{L}_1,\l,\e a]}}{1+m\t_{[\mf{L}_1,\l,\e a]}},d]}.
 \end{equation}
\end{lemma}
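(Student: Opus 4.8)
The plan is to derive the two-sided bounds in \eqref{5.1} by exhibiting suitable strict sub- and supersolutions for the scalar logistic problems that govern $w$ and $v$, and then invoking the comparison statement of Theorem \ref{th2.3}(c). Throughout I fix a coexistence state $(w,v)$ of \eqref{1.7} with $\e>0$, so that $w\gneq 0$ and $v\gneq 0$, and I will first upgrade these to $w\gg_1 0$, $v\gg_2 0$ via the strong maximum principle (Theorem \ref{th2.2}(c)) applied to the equations satisfied by $w$ and $v$; indeed $\mf{L}_1 w = \big(\l - \e a w - b\tfrac{v}{1+mw}\big)w$ with a bounded potential, so if $w\gneq 0$ then $w\gg_1 0$, and similarly for $v$.

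\emph{Bounds on $w$.} Since $w$ solves
\begin{equation*}
\mf{L}_1 w = \l w - \e a(x) w^2 - b(x)\frac{wv}{1+m(x)w} \quad\text{in }\O, \qquad \mf{B}_1 w = 0 \text{ on }\p\O,
\end{equation*}
and the term $b(x)\tfrac{wv}{1+mw}$ is nonnegative (as $b\geq 0$, $v\gg_2 0$, $w\gg_1 0$, $m\geq 0$), we have
\begin{equation*}
\mf{L}_1 w \leq \l w - \e a(x) w^2 \quad\text{in }\O,
\end{equation*}
so $w$ is a positive strict subsolution of the logistic problem $(\mf{L}_1,\l,\e a)$ — strict because $b\tfrac{wv}{1+mw}\gneq 0$ (here we use $b\gneq 0$, $v\gg_2 0$, $w\gg_1 0$). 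In particular $\l > \s_{0,1}$, hence $\t_{[\mf{L}_1,\l,\e a]}\gg_1 0$ exists, and Theorem \ref{th2.3}(c) gives $w\ll_1 \t_{[\mf{L}_1,\l,\e a]}$. This is the first chain in \eqref{5.1}.

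\emph{Bounds on $v$.} For the lower bound, $v$ solves
\begin{equation*}
\mf{L}_2 v = \mu v - d(x) v^2 + \e c(x)\frac{wv}{1+m(x)w} \quad\text{in }\O,
\end{equation*}
and since $\e c(x)\tfrac{wv}{1+mw}\gneq 0$ (using $c\gneq 0$, $w\gg_1 0$, $v\gg_2 0$), $v$ is a positive strict supersolution of the logistic problem $(\mf{L}_2,\mu,d)$; thus $\mu>\s_{0,2}$, $\t_{[\mf{L}_2,\mu,d]}\gg_2 0$ exists, and Theorem \ref{th2.3}(c) yields $\t_{[\mf{L}_2,\mu,d]}\ll_2 v$. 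For the upper bound, I combine the bound just obtained on $w$ with monotonicity of the nonlinear term $t\mapsto \tfrac{t}{1+mt}$: since $w\ll_1 \t_{[\mf{L}_1,\l,\e a]}$ and $t\mapsto\tfrac{t}{1+m(x)t}$ is increasing on $[0,\infty)$ for each $x$,
\begin{equation*}
\mf{L}_2 v = \mu v - d(x) v^2 + \e c(x)\frac{w}{1+m(x)w}\,v \leq \Big(\mu + \e c(x)\frac{\t_{[\mf{L}_1,\l,\e a]}}{1+m(x)\t_{[\mf{L}_1,\l,\e a]}}\Big) v - d(x) v^2,
\end{equation*}
so $v$ is a positive (strict, since the inequality on the coefficient is strict on a set of positive measure) subsolution of the logistic problem with parameter $\mu$ replaced by the heterogeneous weight $\mu + \e c\tfrac{\t_{[\mf{L}_1,\l,\e a]}}{1+m\t_{[\mf{L}_1,\l,\e a]}}$. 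Applying Theorem \ref{th2.3}(c) once more — in the form valid for a bounded potential, i.e.\ with $\mf{L}_2 + V$ where $V = -\e c\tfrac{\t_{[\mf{L}_1,\l,\e a]}}{1+m\t_{[\mf{L}_1,\l,\e a]}} \in L^\infty(\O)$ and $\rho = \mu$ — gives $v\ll_2 \t_{[\mf{L}_2,\mu+\e c\tfrac{\t_{[\mf{L}_1,\l,\e a]}}{1+m\t_{[\mf{L}_1,\l,\e a]}},d]}$, which closes the second chain of \eqref{5.1}.

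\emph{Main obstacle.} The only delicate point is bookkeeping the strictness of the inequalities so that the strong-comparison conclusions $\ll_\k$ (rather than merely $\leq$) are legitimate: one must check that in each step the defect term is $\gneq 0$, not just $\geq 0$, which is exactly where the standing hypotheses $b\gneq 0$, $c\gneq 0$ and the strong positivity $w\gg_1 0$, $v\gg_2 0$ (established at the outset) are used. A secondary technical point is ensuring that Theorem \ref{th2.3} is applied in the heterogeneous-weight form for the last bound; this is covered by the statement of Theorem \ref{th2.3}, which allows an arbitrary $V\in L^\infty(\O)$ and reads off the comparison from Theorem \ref{th2.2}.
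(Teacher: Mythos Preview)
Your proof is correct and follows essentially the same route as the paper: show $w\gg_1 0$, use the $w$-equation to exhibit $w$ as a strict subsolution of the logistic problem $(\mf{L}_1,\l,\e a)$ and invoke Theorem~\ref{th2.3}(c), then feed the resulting bound $w\ll_1\t_{[\mf{L}_1,\l,\e a]}$ into the $v$-equation (via the monotonicity of $t\mapsto t/(1+mt)$) to sandwich $v$ between two logistic solutions. Two very minor points of bookkeeping: the upgrade $w\gneq 0\Rightarrow w\gg_1 0$ is more cleanly justified by the uniqueness of the principal eigenvalue (as the paper does) than by Theorem~\ref{th2.2}(c), since the latter presupposes $\s_0>0$; and the assertion ``thus $\mu>\s_{0,2}$'' does not actually follow from $v$ being a strict supersolution alone, though it is immaterial here because Theorem~\ref{th2.3}(c) with $\t_{[\mf{L}_2,\mu,d]}=0$ still gives the claimed lower bound when $\mu\le\s_{0,2}$.
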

\begin{proof}
Since $w\gneq0$, by the uniqueness of the principal eigenvalue, it follows from the
$w$-equation of \eqref{1.7} that $w\gg_1 0$ and that
\begin{equation*}
\l=\s_0\left[ \mf{L}_1+\e aw+b \dfrac{v}{1+mw},\mf{B}_1,\O\right].
\end{equation*}
Moreover, it follows from $w\gg_1 0$ that
\begin{equation*}
\mf{L}_1 w=\lambda w - \varepsilon a w^2 -b\dfrac{wv}{1+m w}\lneq \l w-\e aw^2.
\end{equation*}
Thus, $w$ is a positive strict subsolution of the problem
\begin{equation*}
\left\{
\begin{array}{ll}
\mf{L}_1 w=\lambda w - \varepsilon a(x)w^2 &\quad \hbox{in}\;\;\Omega,\\[1ex]
\mf{B}_1 w=0 &\quad\hbox{on}\;\;\partial\Omega.
\end{array}
\right.
\end{equation*}
Hence, since $\l>\s_{0,1}$, it follows from Theorem \ref{th2.3} that
\begin{equation}
\label{5.2}
w\ll_1\t_{[\mf{L}_1,\l,\e a]}.
\end{equation}
This completes the proof of the first two estimates of \eqref{5.1}.  Similarly, by \eqref{5.2},
\begin{align*}
\mu v-d v^2\lneq\mf{L}_2 v&=\mu v -d v^2+ \varepsilon c \dfrac{wv}{1+m w}\\
&\lneq
\left(\mu+ \varepsilon c\dfrac{\t_{[\mf{L}_1,\l,\e a]}}{1+m\t_{[\mf{L}_1,\l,\e a]}}\right)v-d v^2,
\end{align*}
which implies that $v$ is a positive strict supersolution  of
\begin{equation*}
\left\{
\begin{array}{ll}
\mf{L}_2 v=\mu v - d v^2 &\quad \hbox{in}\;\;\Omega,\\[1ex]
\mf{B}_2 v=0 &\quad\hbox{on}\;\;\partial\Omega,
\end{array}	
\right.
\end{equation*}
as well as a positive strict subsolution of
\begin{equation*}
\left\{
\begin{array}{ll}
\mf{L}_2 v=\mu v - d v^2+ \varepsilon c \dfrac{\t_{[\mf{L}_1,\l,\e a]}v}{1+m \t_{[\mf{L}_1,\l,\e a]}} &\quad \hbox{in}\;\;\Omega,\\[1ex]
\mf{B}_2 v=0 &\quad\hbox{on}\;\;\partial\Omega.
\end{array}	
\right.
\end{equation*}
Therefore, the last two estimates of \eqref{5.1} also follow from Theorem \ref{th2.3}.
\end{proof}

The rest of this section is devoted to the proof of Theorem \ref{th5.1}. Throughout it, we fix $\mu>\s_{0,2}$ and  $\l^*\in(\v_0(\mu),\Phi(\mu))$, consider a sufficiently small $r>0$ satisfying the conclusions of
Theorem \ref{th4.2}, and pick $\l_0, \l_1 \in (\v_0(\mu),\Phi(\mu))$ such that
$$
  \l_0 <\v_0(\mu)+r<\l^*<\Phi(\mu)-r< \l_1<\Phi(\mu).
$$
Naturally, $r>0$ can be shortened as much as  necessary. Moreover, for every $t, s \in (\v_0(\mu),\Phi(\mu))$ with $t<s$, we denote by
$\mathscr{C}_{0,[t,s]}^+$ the restriction of the component $\mathscr{C}_0^+$ to the interval $[t,s]$, i.e.,
$$
\mathscr{C}_{0,[t,s]}^+ \equiv \left\{(\l,\mu,w,\t_{[\mf{L}_2,\mu,d]})\in \mathscr{C}_0^+\;:\;\l\in [t,s]\right\}.
$$
By the choice of $\l_0$ and $\l_1$, Theorem \ref{th4.2} guarantees that $\mathscr{C}_{0,[\l_0,\l_1]}^+$ has a unique non-degenerate positive solution for every
\begin{equation}
\label{v.3}
  \l\in [\l_0,\v_0(\mu)+r]\cup [\Phi(\mu)-r,\l_1].
\end{equation}
Actually, by  the implicit function theorem, each of the components $\mathscr{C}_{0,[\l_0,\v_0(\mu)+r]}^+$ and $\mathscr{C}_{0,[\Phi(\mu)-r,\l_1]}^+$ consists of an analytic arc of $\l$-curve. This is a pivotal feature in the proof given here. As these solutions are non-degenerate, once again by the implicit function theorem, these two arcs perturb into two $\l$-arcs of non-degenerate solutions of \eqref{1.7} for sufficiently small $\e>0$.
\par
Now, we consider the bounded set
$$
	\mc{C}_\eta:=\mathscr{C}_{0,[\l_0,\l_1]}^++B_{\eta},
$$
where $B_{\eta}$ stands for the open ball of radius $\eta$ centered at
$(\mu,w,v)=(\mu,0,0)$ in the product space
$$
    \mathscr{X}\equiv \R\times  \mc{C}_{\mf{B}_1}^1(\bar\O)\times  \mc{C}_{\mf{B}_2}^1(\bar\O).
$$
Then, $\mc{C}_\eta$ is a $\eta$-neighborhood of $\mathscr{C}_{0,[\l_0,\l_1]}^+$ with side covers
$$
  \{\l_0\}\times [(\mu,w_{\l_0},\t_{[\mf{L}_2,\mu,d]})+ B_\eta],\qquad
  \{\l_1\}\times [(\mu,w_{\l_1},\t_{[\mf{L}_2,\mu,d]})+ B_\eta],
$$
where $w_\l$ denotes the unique positive solution of \eqref{4.1} for every $\l$ satisfying \eqref{v.3}. By construction, $\mathscr{C}_{0,[\l_0,\l_1]}^+\subset \mc{C}_\eta$. Moreover, for sufficiently small $\eta>0$,
$$
  (\l,\mu,w,v)=(\l,\mu,w_\l,\t_{[\mf{L}_2,\mu,d]})
$$
is the unique solution of \eqref{1.8} in $\mc{C}_\eta$ for each $\l$ satisfying \eqref{v.3}. Furthermore, since the $w$-components of the elements of $\mathscr{C}_{0,[\l_0,\l_1]}^+$ are separated away from zero, because $\l=\Phi(\mu)$ is the unique bifurcation value to coexistence states from $w=0$, $\bar{\mc{C}}_\eta$ cannot admit any solution of the form $(\l,\mu,0,v)$ with $v=0$ or $v=\t_{[\mf{L}_2,\mu,d]}$ for sufficiently small $\eta>0$.
\par
Next, we will adapt the proof of \cite[Th. 6.3.1]{LG01}, through a well known lemma of Whyburn \cite[Ch. 1]{Why-1964} on compact continua, to show that, if necessary,  $\mc{C}_\eta$ can be shortened in the interval
$[\v_0(\mu)+r,\Phi(\mu)-r]$ up to obtain an \emph{isolating neighborhood} of $\mathscr{C}_0^+$, denoted by $\mc{O}$, in the sense that, besides the previous properties of $\mc{C}_\eta$,  $\p_L\mc{O}\cap \mathscr{S}_0$ cannot admit any positive solution of \eqref{1.8}, $(\l,\mu,w,\t_{[\mf{L}_2,\mu,d]})$,  with
$\l \in [\v_0(\mu)+r,\Phi(\mu)-r]$. We are denoting by  $\p_L \mc{O}$ the set $\p\mc{O}$, except for the two
lateral side covers at $\l=\l_0$ and $\l=\l_1$, where $\mathscr{S}_0$ has exactly two non-degenerate coexistence states. Indeed, should $\mc{C}_\eta$ satisfy this property we can take
$\mc{O}=\mc{C}_\eta$. Otherwise, we consider the non-empty compact sets
\begin{align*}
M & := \left\{(\l,\mu,w,\t_{[\mf{L}_2,\mu,d]})\in \bar{\mc{C}}_\eta\cap\mathscr{S}_0\;:\;
\l \in [\v_0(\mu)+r,\Phi(\mu)-r]\right\},\\
A & :=\left\{(\l,\mu,w,\t_{[\mf{L}_2,\mu,d]})\in \partial\mc{C}_\eta\cap\mathscr{S}_0\;:\;
\l \in [\v_0(\mu)+r,\Phi(\mu)-r]\right\},\\
B & :=\mathscr{C}_{0,[\v_0(\mu)+r,\Phi(\mu)-r]}^+.
\end{align*}
These sets are compact because they are closed and bounded sets consisting of fixed points of a compact operator. Moreover, $A$ and $B$ are disjoint. Thus, according to  Whyburn \cite[Ch.1]{Why-1964}, since $B$ is a connected component, there are two disjoint compact subsets of $M$, $M_A$ and $M_B$, such that $A\subset M_A$,
$B\subset M_B$ and $M=M_A\cup M_B$. Thus, setting $\delta:={\rm dist}(M_A,M_B)>0$, it is easily seen that
\[
\mc{O}:=\mc{C}_\eta\setminus \overbar{M_A+B_{\frac{\d}{2}}}
\]
satisfies similar properties as $\mc{C}_\eta$ and, in addition, by construction,
\begin{equation}
\label{v.4}
   \p_L\mc{O}\cap \mathscr{S}_0=\emptyset,
\end{equation}
 This construction has been sketched in  Figure \ref{Fig4}, where an admissible $\mc{O}$ has been plotted  when $\mc{O}=\mc{C}_\eta$.

\begin{figure}[ht!]
	\centering
	\includegraphics[scale=0.40]{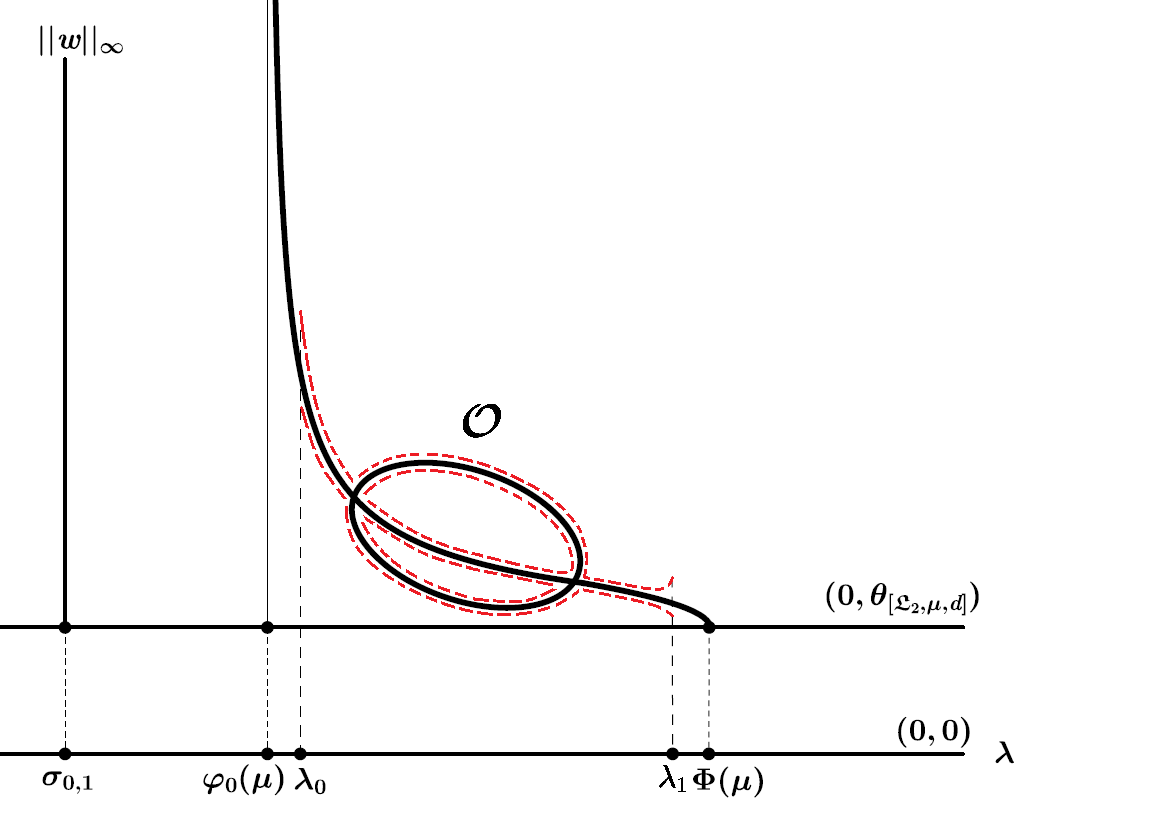}
	\caption{The isolating neighborhood $\mc{O}$ of $\mathscr{C}_{0,[\l_0,\l_1]}^+$}
	\label{Fig4}
\end{figure}

Subsequently, for sufficiently small $\e>0$, we denote by $\mathscr{S}_\e$ the set of nontrivial solutions of \eqref{1.7},
$$
   \mathscr{S}_\e:=\{(\l,\mu,w,v)\in\mf{F}^{-1}(0)\,:\,(w,v)\neq (0,\t_{[\mf{L}_2,\mu,d]})\}\cup\{(\l,\mu,0,\t_{[\mf{L}_2,\mu,d]})\;:\;\l\in\Sigma(\mathscr{L}(\l,\e))\},
$$
where $\Sigma(\mathscr{L}(\l,\e))$ is the generalized spectrum of $\mathscr{L}(\l,\e)$, as discussed in
\cite{LG01}. By \cite[Th. 7.2.2]{LG01}, there exists a component of $\mathscr{S}_\e$, denoted by
$\mathscr{C}_\e^+$, consisting of coexistence states of \eqref{1.7} such that
$$
  (\Phi(\mu),\mu,0,\t_{[\mf{L}_2,\mu,d]}) \in \mathscr{\bar C}_\e^+.
$$
 However, contrarily to what happens with $\mathscr{C}_0^+$, Lemma \ref{le5.1} entails that, for every $\e>0$ and $\hat\l>\Phi(\mu)$, the set of coexistence states
\begin{equation*}
	\left\{(\l,\mu,w,v)\in \mathscr{C}_\e^+\;:\; \l \in (\v_\e(\mu),\hat\l]\right\}
\end{equation*}
is bounded, whereas, thanks to \cite[Th. 7.2.2]{LG01}, $\mathscr{C}_\e^+$ is unbounded. Consequently,
as soon as $\l'(0)<0$, which holds true for sufficiently small $\e>0$, there exists $\l_T\equiv \l_T(\e)\in (\v_\e(\mu),\Phi(\mu))$ such that
\begin{equation*}
\mc{P}_\l \left( \mathscr{C}_\e^+\right)= [\l_T(\e),+\infty).
\end{equation*}
We claim that $\l_T(\e)<\l_0$ for sufficiently small $\e>0$. Since $\l_0<\l^*$, this ends the
proof of Part (a). Note that $\l_T>\v_\e(\mu)$ by Theorem \ref{th3.4}. To prove $\l_T(\e)<\l_0$, we first
show that
\begin{equation}
\label{v.5}
[\l_0,\Phi(\mu))\subset \mc{P}_\l \left( \mathscr{C}_\e^+\right)\quad \hbox{for sufficiently small}\;\;
\e>0.
\end{equation}
This holds true thanks to the crucial feature that the isolating neighborhood of $\mathscr{C}_0^+$ in $[\l_0,\l_1]$, $\mc{O}$, also provides us with an isolating neighborhood of $ \mathscr{C}_\e^+$ in $[\l_0,\l_1]$ for sufficiently small $\e>0$ if $\l_1$ is sufficiently close to $\Phi(\mu)$. Indeed, thanks to Theorem \ref{th3.2}, one can choose $\l_1$ to be sufficiently close to $\Phi(\mu)$ so that, for sufficiently small $\e>0$, $ \mathscr{C}_\e^+$  has a unique non-degenerate coexistence state close to $(w,v)=(0,\t_{[\mf{L}_2,\mu,d]})$ for all $\l \in [\l_1,\Phi(\mu))$, say
$$
   (\l,\mu,w,v)=(\l,\mu,w_{\l,\e},v_{\l,\e}),\qquad \l \in [\l_1,\Phi(\mu)),\;\; \e \in [0,\e_0).
$$
Naturally, as Theorem \ref{th3.2} shows that $ \mathscr{C}_\e^+$ is  a regular perturbation of $\mathscr{C}_0^+$ through the implicit function theorem in a neighborhood of the bifurcation point, there exists $\e_0>0$ such that, for every $\e\in [0,\e_0)$, the coexistence state $(\l_1,\mu,w_{\l_1,\e},v_{\l_1,\e})$ lies in the interior of the right side cover of $\mc{O}$; actually, it is the unique coexistence state of \eqref{1.7} on $\p\mc{O}$ for $\l=\l_1$. This argument combined with the local uniqueness of Theorem \ref{th3.2} shows Part (c). Figure \ref{Fig6} sketches this behavior. As in the remaining bifurcation diagramas plotted in this section, the dashed curve represents $\mathscr{C}_0^+$, while
the continuous curve shows $\mathscr{C}_\e^+$ for sufficiently small $\e>0$.
According to Theorem \ref{th3.2}, these are the unique solutions of the model in a neighborhood of the bifurcation point for sufficiently
small $\e\geq 0$. All are non-degenerate; actually, linearly unstable with one-dimensional unstable
manifold by the exchange stability principle.

\begin{figure}[ht!]
\centering
\includegraphics[scale=0.4]{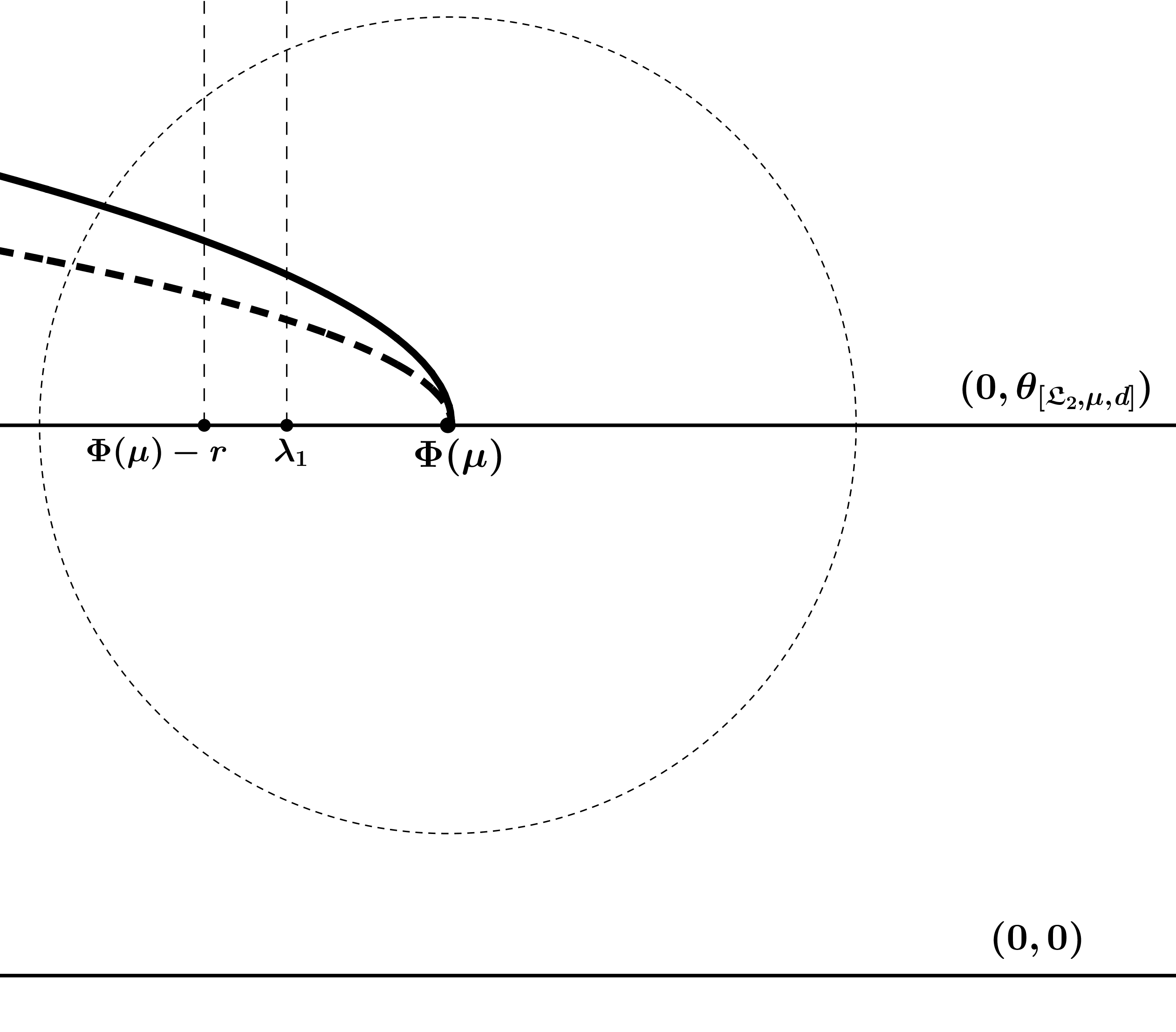}
\caption{The ball where the solutions of \eqref{1.7} are analytic $\l$-curves}
\label{Fig6}
\end{figure}

Once shown that $\mathscr{C}_\e^+$ reaches $\mc{O}$ at $\l=\l_1$, and so enters into $\mc{O}$, we claim that these components  must abandone $\mc{O}$ passing through some point with $\l=\l_0$, as illustrated by the
left picture of Figure \ref{Fig7}, so concluding the proof of \eqref{v.5}. Since they must abandone $\mc{O}$ because they are unbounded, in order to prove our claim, it suffices to make sure that $\mathscr{C}_\e^+$ cannot leave $\mc{O}$ through $\p_L\mc{O}$ for sufficiently small $\e>0$, as illustrated by the right picture of
Figure \ref{Fig7}.  Our proof of this fact proceeds by contradiction. Assume that there is a sequence $\e_n$, $n\geq 2$, such that $\lim_{n\uparrow \infty}\e_n=0$, and, for every $n\geq 2$, $\l'(\e_n)<0$ and
the problem \eqref{1.7} has some coexistence state, $(\l_n,\mu,w_n,v_n)\in \p_L\mc{O}$, for $\e=\e_n$ and some $\l_n\in [\v_0(\mu)+r,\Phi(\mu)-r]$, as sketched on the right picture of Figure \ref{Fig7}.

\begin{figure}[ht!]
	\centering
	\includegraphics[scale=0.09]{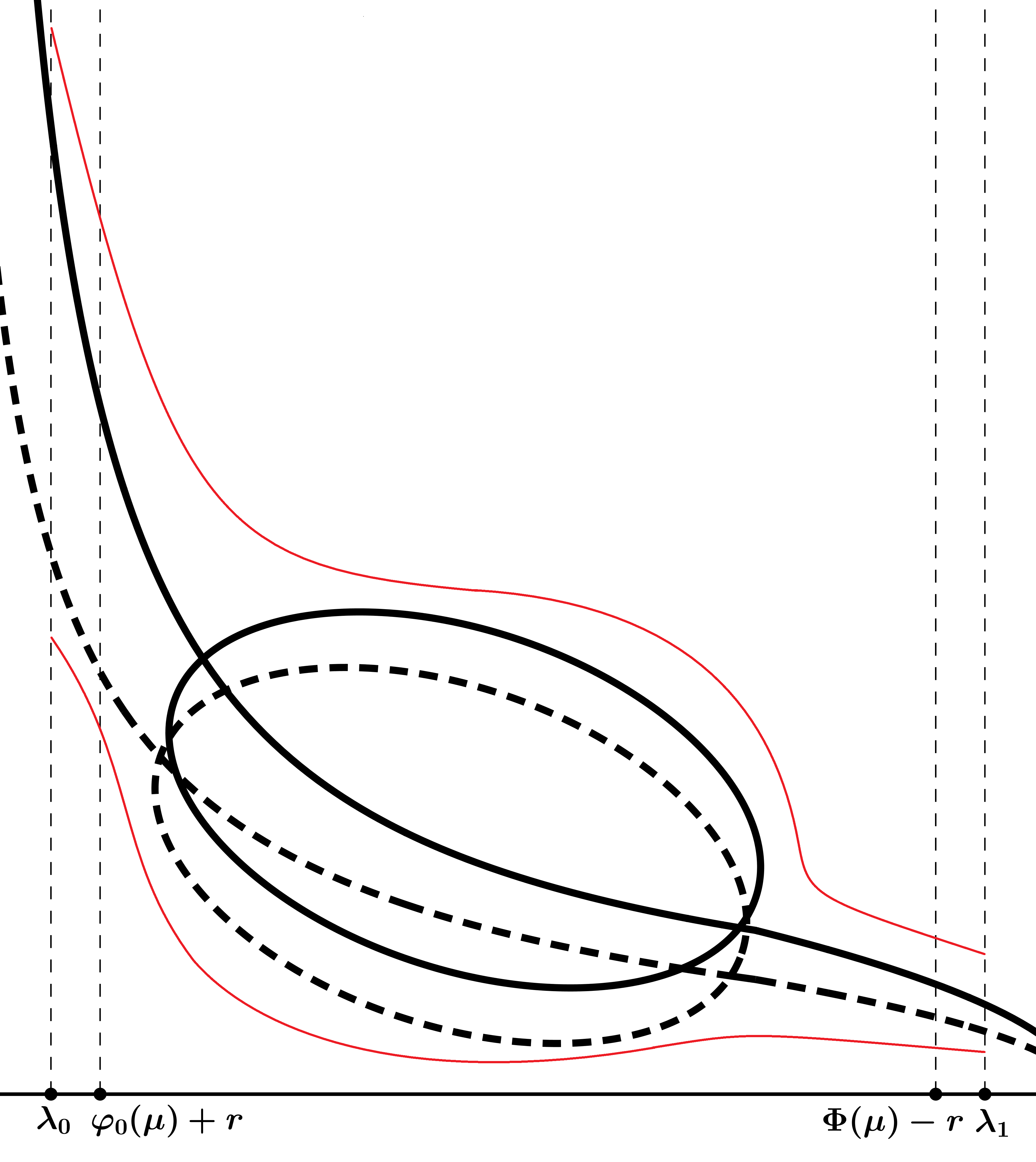}\qquad\quad
	\includegraphics[scale=0.079]{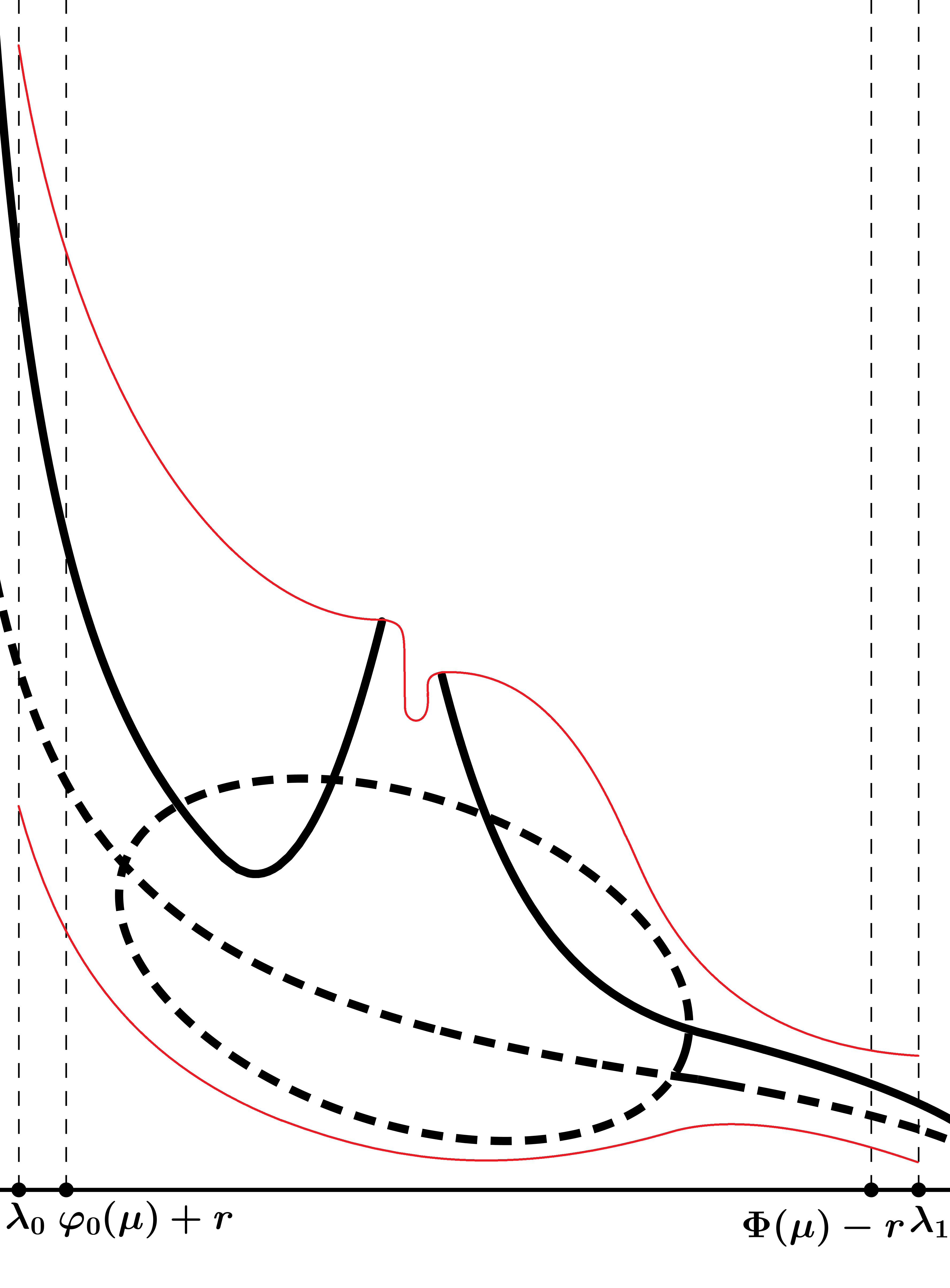}
	\caption{The isolating neighborhood $\mc{O}$ of $\mathscr{C}_{0,[\l_0,\l_1]}^+$}
	\label{Fig7}
\end{figure}

Then, since  $\{(\l_n,\mu,w_n,v_n)\}_{n\geq 2}$ is bounded in $[\l_0,\l_1]\times\{\mu\}\times \mc{C}_{\mf{B}_1}^1(\bar\O)\times \mc{C}_{\mf{B}_2}^1(\bar\O)$ and it consists
of fixed points of a sequence of associated compact operators depending continuously on $\e$, $\e\sim 0$, by a rather standard compactness argument, we can extract a subsequence, relabeled by $n\geq 2$, such that
$$
\lim_{n\to \infty}(\l_n,\mu,w_n,v_n)=(\l_\o,\mu,w_\o,v_\o)\in\partial_L \mc{O}
$$
for some $w_\o\geq 0$, $v_\o\geq0$ and $\l_\o\in[\l_0,\l_1]$ such that $(\l_\o,\mu,w_\o,v_\o)$ solves \eqref{1.8}. Since  $\mc{O}$ is an isolating neighborhood of $\mathscr{C}_0^+$, it becomes apparent that $w_\o\gg_1 0$ and $v_\o\gg_2 0$. But this contradicts \eqref{v.4}. Therefore, \eqref{v.5} holds true. Consequently, for every $\e\in [0,\e_0)$, we have that $\l_T(\e)\leq \l_0<\l^*$, which ends the proof of Part (a).
\par
Note that, as $\e>0$ perturbs from zero, a further application of the implicit function theorem shows that the analytic arcs of $\l$-curve $\mathscr{C}_{0,[\l_0,\v_0(\mu)+r]}^+$ and $\mathscr{C}_{0,[\Phi(\mu)-r,\l_1]}^+$ perturb into two $\l$-arcs of $\mathscr{C}_\e^+$ within $\mc{O}$, denoted by $\mathscr{C}_{\e,[\l_0,\v_0(\mu)+r]}^+$ and $\mathscr{C}_{\e,[\Phi(\mu)-r,\l_1]}^+$,  and that these arcs consist of non-degenerate solutions of \eqref{1.7} for sufficiently small $\e\geq 0$. By a further application of the implicit function theorem at the unique solution of $\mathscr{C}_\e^+$
on $\p \mc{O}$ at $\l_0$, say $(\l_0,\mu,w_{\l_0,\e},v_{\l_0,\e})$,  this entails that actually for sufficiently small $\e>0$ there exists $\d(\e)>0$ such that
\begin{equation*}
[\l_0-\d(\e),\Phi(\mu))\subset \mc{P}_\l \left( \mathscr{C}_\e^+\right).
\end{equation*}
Moreover,
$$
  \lim_{\e\da 0} (w_{\l_0,\e},v_{\l_0,\e})=(w_{\l_0},\t_{[\mf{L}_2,\mu,d]}).
$$
Therefore, since
$$
   \mathscr{C}_\e^+\setminus  \mathscr{C}_{\e,[\l_0,\v_0(\mu)+r]}^+
$$
is unbounded, it follows from Lemma \ref{le5.1} that, for every $\l \in [\l_0,\Phi(\mu))$, \eqref{1.7}
has, at least, two coexistence states for sufficiently small $\e>0$. This proves Part (b) and concludes the
proof of Theorem \ref{th5.1}.
\par
Another proof of the multiplicity result of Part (b) can be given by using the topological degree. Although this proof does not allow to show that each of the components $\mathscr{C}_\e^+$ bend backwards at some
supercritical turning point for sufficiently small $\e>0$, it provides with the local index of the additional solutions, which is imperative to ascertain their local stability character. The alternative proof proceeds as follows. Thanks to Theorem \ref{th4.2}, it follows from the invariance by homotopy of the Leray--Schauder degree, that, for every $\e\in [0,\e_0)$ and $\l\in [\l_0,\l_1]$,
\begin{equation}
\label{v.6}
\mathrm{Deg\,} (\mf{F}( \l,\mu,\e,\cdot,\cdot),\mc{O}_\l)=-1,
\end{equation}
where $\mf{F}$ is the operator defined in \eqref{3.8} and, for every $\l\in [\l_0,\l_1]$, we are denoting
$$
  \mc{O}_\l :=\{(\mu,w,v)\in \mathscr{X}\;:\; (\l,\mu,w,v)\in \mc{O}\}.
$$
Subsequently we will use the fixed point index in cones, as axiomatized by Amann \cite{Am-1976} and Dancer \cite{Da-1983}, which was applied by the first time to the classical diffusive Lotka--Volterra models by L\'{o}pez-G\'{o}mez and Pardo \cite{LGP-1992}, L\'{o}pez-G\'{o}mez \cite{LG-1992} and, more recently, by Fern\'{a}ndez-Rinc\'{o}n and L\'{o}pez-G\'{o}mez \cite{FRLG-2021}, among many others. First, we consider, for every $i=1,2$, the positive cone of $\mathscr{W}_i$,
\[
   \mathscr{P}_{\mathscr{W}_i}:=\{u \in\mathscr{W}_i \; :\; u\geq0\;\;\hbox{in}\;\O\}
\]
and the associated system to \eqref{1.7}
\begin{equation}
\label{v.7}
\left\{
\begin{array}{lll}
\mf{L}_1 w=\lambda w - \varepsilon a(x)w^2 -\a b(x)\dfrac{wv}{1+m(x)w} &\quad \hbox{in}\;\;\Omega,\\[10pt]
\mf{L}_2 v=\mu v -d(x)v^2+ \a\varepsilon c(x)\dfrac{wv}{1+m(x)w}&\quad \hbox{in}\;\;\Omega,\\[10pt]
\mf{B}_1 w=\mf{B}_2 v=0 &\quad\hbox{on}\;\;\partial\Omega,
\end{array}
\right.
\end{equation}
where $\a\in[0,1]$ is an homotopy parameter to uncouple \eqref{1.7} into two semilinear boundary value problems.  By applying Lemma \ref{le5.1} uniformly in $\a\in [0,1]$, it is easily seen that there exists a bounded open subset $\mc{W}\times\mc{V}\subset\mathscr{W}_1\times\mathscr{W}_2$, independent of $\a\in[0,1]$,
such that $(w,v) \in \mc{W}\times\mc{V}$ if $(w,v)\in P_{\mathscr{W}_1}\times P_{\mathscr{W}_2}$ solves
\eqref{v.7} for some $\a\in[0,1]$.
\par
Subsequently, we choose a sufficiently large $e\geq0$ such that
\begin{equation}
\label{v.8}
\s_0[\mf{L}_i+e,\mf{B}_i,\O]>1,\qquad i=1, 2,
\end{equation}
and, for every $\a\in[0,1]$, $w\in\mc{W}$, and $v\in\mc{V}$,
\begin{equation}
\label{v.9}
\l-a\e w-\a b\frac{v}{1+mw}+e>0, \qquad
\mu-dv+\a \e c\frac{w}{1+mw}+e>0
\qquad\hbox{in}\;\,\bar{\O}.
\end{equation}
Then, thanks to \eqref{v.8} and \eqref{v.9}, the map
\[
\mc{H}:[0,1]\times\mc{W}\times\mc{V}\rightarrow \mathscr{W}_1\times\mathscr{W}_2
\]
defined by
\[
\mc{H}(\a,w,v)=
\left(
\begin{array}{ll}
&(\mf{L}_1+e)^{-1}[(\l-\e aw-\a b\frac{v}{1+mw}+e)w]\\[5pt]
&(\mf{L}_2+e)^{-1}[(\mu-dv+\a\e c\frac{w}{1+mw}+e)v]
\end{array}
\right),
\]
is a compact order preserving operator whose non-negative fixed points are the solutions of \eqref{v.7} in $P_{\mathscr{W}_1}\times P_{\mathscr{W}_2}$. Adapting the analysis of  Steps i)-v) of the proof of \cite[Th. 4.1]{LG-1992}, or Lemmas 5.6-5.9 of \cite{FRLG-2021}, one can find out the fixed point indices of the non-negative solutions of \eqref{1.7} as fixed points of $\mc{H}(1,\cdot,\cdot)$. It turns out that
\begin{equation}
\label{v.10}
i_{P_{\mathscr{W}_1}\times P_{\mathscr{W}_2}}\left(\mc{H}(1,\cdot,\cdot),\mc{W}\times\mc{V}\right)=1,
\end{equation}
whereas
\begin{equation}
\label{v.11}
i_{P_{\mathscr{W}_1}\times P_{\mathscr{W}_2}}\left(\mc{H}(1,\cdot,\cdot),(0,0)\right)=0
\quad\hbox{if}\;\,\l>\s_{0,1}\;\;\hbox{or}\;\; \mu>\s_{0,2}.
\end{equation}
Moreover,
\begin{equation}
\label{v.12}
\left\{
\begin{array}{ll}
i_{P_{\mathscr{W}_1}\times P_{\mathscr{W}_2}}\left(\mc{H}(1,\cdot,\cdot),(\t_{[\mf{L}_1,\l,a]},0)\right)=0&\qquad\hbox{if}\;\;\mu>\Psi_\e(\l),\\[5pt]
i_{P_{\mathscr{W}_1}\times P_{\mathscr{W}_2}}\left(\mc{H}(1,\cdot,\cdot),(0,\t_{[\mf{L}_2,\mu,d]})\right)=1&\qquad\hbox{if}\;\;\l<\Phi(\mu).
\end{array}
\right.
\end{equation}
Thus, for every $\l\in (\s_{0,1},\Phi(\mu))$ and $\mu>\s_{0,2}$,
\begin{align*}
1 & = i_{P_{\mathscr{W}_1}\times P_{\mathscr{W}_2}}\left(\mc{H}(1,\cdot,\cdot),\mc{W}\times\mc{V}\right) =  i_{P_{\mathscr{W}_1}\times P_{\mathscr{W}_2}}\left(\mc{H}(1,\cdot,\cdot),(0,0)\right) \\ & \hspace{1.1cm} + i_{P_{\mathscr{W}_1}\times P_{\mathscr{W}_2}}\left(\mc{H}(1,\cdot,\cdot),(\t_{[\mf{L}_1,\l,a]},0)\right)+i_{P_{\mathscr{W}_1}\times P_{\mathscr{W}_2}}\left(\mc{H}(1,\cdot,\cdot),(0,\t_{[\mf{L}_2,\mu,d]})\right).
\end{align*}
Consequently, the global index of the coexistence states, as fixed points of
 $\mc{H}(1,\cdot,\cdot)$, equals zero and, since \eqref{v.6} entails
$$
  i_{P_{\mathscr{W}_1}\times P_{\mathscr{W}_2}}\left(\mc{H}(1,\cdot,\cdot),\mc{O}_\l\right)=-1\quad
  \hbox{for all}\;\; \l\in[\l_0,\l_1],
$$
the existence of  a second coexistence state follows for every
$\l\in [\l_0,\l_1]$. Taking into account that $\l_0<\l^*$ and that
$\l_1$ can be chosen arbitrarily close to $\Phi(\mu)$, the multiplicity result of
Theorem \ref{th5.1}(b) readily follows.
\par

\begin{remark}
\label{re5.1} \rm
The multiplicity result of Theorem \ref{th5.1}(b) holds as soon as $\l'(\e)<0$, which
occurs for $\e\in [0,\e^*)$, where $\l'(\e^*)=0$. It remains an open problem to ascertain whether, or not, \eqref{1.7} can admit a coexistence state for some $\l \in (\v_\e(\mu),\l_T(\e))$. This might depend on the nature of the spatial heterogeneities of the several coefficients involved in the setting of \eqref{1.7}.
\end{remark}

\section{A simple illustrative example}
\label{sec6}

This section considers \eqref{1.7} in the special case when:
\begin{itemize}
\item $c_1=c_2=0$ in $\O$.
\item $\G_1=\p\O$ (i.e., $\G_0=\emptyset$), and $\b_1=\b_2=0$ on $\p\O$.
\item $a$, $b$, $c$ and $d$ are positive constants, and $m = 1$ in $\O$.
\end{itemize}
Then, since $\mf{B}_\kappa = \frac{\p}{\p \nu_\kappa}\equiv \partial_{\nu_\k}$ for $\kappa = 1, 2$,
it turns out that we are imposing non-flux boundary conditions on $\p\O$. Thus,
$$
   \s_{0,\kappa} := \s_0[\mf{L}_\k,\partial_{\nu_\k},\Omega]=0,\qquad \kappa=1,2.
$$
Consequently, throughout this section we assume that $\l>0$ and fix $\mu>0$. As in the preceding sections, $\l>0$ is regarded as a bifurcation parameter.
\par
By the special nature of \eqref{1.7} under these conditions, any component-wise positive solution $(w,v)$ of the algebraic system
\begin{equation}
\label{6.1}
\left\{
\begin{array}{ll}
\lambda-\varepsilon aw-bv\frac{1}{1+w}=0,\\[1ex]
\mu-dv+\varepsilon c\frac{w}{1+w}=0,
\end{array}
\right.
\end{equation}
provides us with a coexistence state of \eqref{1.7}. By the uniqueness
of Theorem \ref{th2.3}, it follows that $\t_{[\mf{L}_2,\mu,d]}=\frac{\mu}{d}$. So,
\begin{equation}
\label{vi.2}
   \Phi(\mu)=\s_0\left[ \mf{L}_1+b\t_{[\mf{L}_2,\mu,d]},\partial_{\nu_1},\O\right]=b\frac{\mu}{d}.
\end{equation}
Eliminating $v$ from the first equation of \eqref{6.1}, we obtain that
\begin{equation}
\label{vi.3}
   v=\frac{1}{b}(1+w)(\lambda-\varepsilon aw),
\end{equation}
and, substituting \eqref{vi.3} into the second equation of \eqref{6.1}, yields to
\begin{equation}
\label{vi.4}
P(w,\l)\equiv P(w):=w^3+\left(2-\frac{\lambda}{\varepsilon a}\right)w^2+\left(1+\frac{bc}{ad}+\frac{b\mu-2d\lambda}{\varepsilon ad}\right)w+\frac{b \mu -d \lambda}{\varepsilon ad}=0.
\end{equation}
Therefore,  $(w,v)$ is component-wise positive solution of the system \eqref{6.1} if, and only if,  $w$ is a positive root of $P(w)$ with
\begin{equation}
\label{vi.5}
  \lambda-\varepsilon aw>0.
\end{equation}
Thus, to find out the coexistence states of \eqref{1.7} for this prototype, one should first ascertain the positive roots of $P(w)$. In this section, we are going to accomplish this task for $\l>\Phi(\mu)$ sufficiently close to $\Phi(\mu)$. Note that, according to the analysis of the previous sections, we already know that
$(\l,w,v)=\left(\Phi(\mu),0,\t_{[\mf{L}_2,\mu,d]}\right)$
is a bifurcation point to a component of coexistence states of \eqref{1.7}.
\par
Suppose $\l >\Phi(\mu)$. Then, by \eqref{vi.2}, $\l >b\frac{\mu}{d}$. Thus,
$$
  P(0)= \frac{b \mu -d \lambda}{\varepsilon ad}<0,
$$
and hence, since $\lim_{w\uparrow +\infty}P(w)=+\infty$, $P(w)$ admits, at least, a positive real root.
Similarly, the polynomial
$$
  P'(w)= 3 w^2+2\left(2-\frac{\lambda}{\varepsilon a}\right)w+ 1+\frac{bc}{ad}+\frac{b\mu-2d\lambda}{\varepsilon ad}
$$
satisfies
$$
  P'(0)= 1+\frac{bc}{ad}+\frac{b\mu-2d\lambda}{\varepsilon ad} <0
$$
if, and only if,
\begin{equation}
\label{vi.6}
  0<\e <\e^*(\l)\equiv \frac{2d\lambda-b\mu}{bc+ad}.
\end{equation}
So, since $\lim_{w\uparrow +\infty} P'(w)= +\infty$, also $P'(w)$ possesses, at least, one
positive root for every $\e \in (0,\e^*)$. Finally, since
$$
  P''(w)= 6w +2\left(2-\frac{\lambda}{\varepsilon a}\right),
$$
it is obvious that $w_c \equiv \frac{1}{3}\left( \frac{\lambda}{\varepsilon a}-2\right)$
is the unique root of $P''$. Suppose that
\begin{equation}
\label{vi.7}
   \e < \min\left\{ \frac{\lambda}{2a},\e^*(\l)\right\}.
\end{equation}
Then $w_c>0$, $P''(w)<0$ if $w\in [0,w_c)$, and $P''(w)>0$ if $w>w_c$. Thus, $P'(w)$ is decreasing
in $(0,w_c)$ and increasing in $(w_c,+\infty)$.  Moreover, by \eqref{vi.6}, $P'(0)<0$, because $\e<\e^*(\l)$. Consequently, there exists $w^*>0$ such that $P'<0$ in $[0,w^*)$, $P'(w^*)=0$, and $P'(w)>0$ for all $w>w^*$. Therefore, $P(w)$ is decreasing in $(0,w^*)$ and increasing in $(w^*,+\infty)$, and, since $P(0)<0$ and $P'(0)<0$, it becomes apparent that, under condition \eqref{vi.7}, $P(w)$ has a unique positive root, say $w_r>w^*>0$. Finally, since
\begin{align*}
  P\left(\frac{\lambda}{\varepsilon a}\right) & =\left(\frac{\lambda}{\varepsilon a}\right)^3 +
  \left(2-\frac{\lambda}{\varepsilon a}\right)\left(\frac{\lambda}{\varepsilon a}\right)^2 +
  \left(1+\frac{bc}{ad}+\frac{b\mu-2d\lambda}{\varepsilon ad}\right) \frac{\lambda}{\varepsilon a}+ \frac{b \mu -d \lambda}{\varepsilon ad}\\ &   =2 \left(\frac{\lambda}{\varepsilon a}\right)^2 +
  \left(1+\frac{bc}{ad}+\frac{b\mu-2d\lambda}{\varepsilon ad}\right) \frac{\lambda}{\varepsilon a}+ \frac{b \mu -d \lambda}{\varepsilon ad}\\ & =
  \frac{1}{\e^2}\Big[ 2\frac{\lambda^2}{a^2}+ \frac{b\mu-2\lambda d}{a^2d}\l+O(\varepsilon)\Big]=
  \e^{-2}\Big[\frac{\l}{a^2} \Phi(\mu)+O(\varepsilon)\Big]> 0
\end{align*}
as $\e\downarrow 0$, necessarily  $w_r<\frac{\lambda}{\varepsilon a}$ for sufficiently small $\varepsilon>0$ and, in particular, $w=w_r$ satisfies \eqref{vi.5}. Therefore, for sufficiently small
$\varepsilon>0$, \eqref{6.1} has a unique coexistence state for every $\lambda>\Phi(\mu)$.
\par
Note that, at $\e=0$, \eqref{6.1} reduces to
$$
\left\{
\begin{array}{ll}
\lambda-bv\frac{1}{1+w}=0,\\[1ex]
\mu-dv=0,
\end{array}
\right.
$$
whose unique solution coexistence state is
$$
   (w,v)=\left( \frac{b\mu}{d\l}-1,\frac{\mu}{d}\right)\,\qquad \l> 0.
$$
As $\l\in (0,\Phi(\mu))$, $w(\l)= \frac{b\mu}{d\l}-1$ decays from $+\infty$ to $0$, while $v$ remains constant. This is the component $\mathscr{C}_0^+$ studied in Section 4 for this so special example. According to the previous analysis, for sufficiently small $\e>0$, the component $\mathscr{C}_0^+$ must perturb into a new component, $\mathscr{C}_\e^+$, having a unique coexistence state for all $\l>\Phi(\mu)$. Thus, $\mathscr{C}_\e^+$ has a supercritical turning point at some $\l_T(\e)$ such that $\lim_{\e\da 0}\l_T(\e)=0$.
\par
However, the uniqueness of the coexistence state for $\lambda>\Phi(\mu)$ can be lost when \eqref{vi.7} fails and $bc>ad$, giving rise to a $S$--shaped bifurcation diagram. Indeed, at the critical value $\l=\Phi(\mu)$, the cubic polynomial $P(w)$ becomes
\begin{equation}
\label{vi.8}
P(w)=P(w,\Phi(\mu))=w^3+\left(2-\frac{\lambda}{\varepsilon a}\right)w^2+\left(1+\frac{bc}{ad}-\frac{b\mu}{\varepsilon ad}\right)w=Q(w) w,
\end{equation}
where
$$
Q(w) := w^2+\left(2-\frac{\lambda}{\varepsilon a}\right)w+1+\frac{bc}{ad}-\frac{b\mu}{\varepsilon ad}.
$$
Thus, at $\l=\Phi(\mu)$, the roots of $P(w)$  are $w=0$ plus the two roots of $Q(w)$. A direct  calculation shows that, as soon as
$$
  \e > \frac{b\mu}{bc+ad}=\e^*,
$$
the polynomial $P(w)$ satisfies
$$
  P(0)=0,\quad P'(0) = 1+\frac{bc}{ad}-\frac{b\mu}{\varepsilon ad} >0.
$$
Suppose $\e>\e^*$ and  $Q(w)$ has two positive roots, $w_+>w_->0$. Then, at $\l=\Phi(\mu)$, the polynomial $P(w)$ has three simple roots. Thus, since the coefficients of $P(w,\l)$ are analytic functions of the parameter $\l$, for sufficiently small $\eta>0$, there are
three analytic functions
$$
  z, w_+, w_- : J_\eta\equiv (\Phi(\mu)-\eta,\Phi(\mu)+\eta)\to \R,
$$
such that
\begin{equation}
\label{vi.9}
   \lim_{\l\to \Phi(\mu)}z(\l)=0,\qquad \lim_{\l\to \Phi(\mu)}w_{\pm}(\l)=w_\pm
\end{equation}
and, for every $\l\in J_\eta$, $z(\l)$ and $w_\pm(\l)$ provide us with the three simple roots of
$P(w)$. Consequently, since $P(0)=0$, $P'(0)>0$ at $\l=\Phi(\mu)$, $P(0)<0$ if $\l>\Phi(\mu)$, and $P(0)>0$ if $\l<\Phi(\mu)$, it becomes apparent that, for sufficiently small $\eta>0$,
$$
  0 < z(\l)<w_-(\l)<w_+(\l) \quad \hbox{if}\;\; \l\in (\Phi(\mu),\Phi(\mu)+\eta),
$$
while
$$
  z(\l)<0< w_-(\l)<w_+(\l) \quad \hbox{if}\;\; \l\in (\Phi(\mu)-\eta,\Phi(\mu)).
$$
Therefore, $P(w,\l)$ has three simple positive roots if $\l\in(\Phi(\mu),\Phi(\mu)+\eta)$ and two if $\l\in(\Phi(\mu)-\eta,\Phi(\mu))$, as illustrated in
the first picture of Figure \ref{Fig8}, where we are plotting the polynomials $P(w,\l)$ for $\l=\Phi(\mu)$
(using a dashed line) and $\l_\pm=\Phi(\mu)\pm\d_\pm$ for some $\d_+,\d_-\in (0,\eta)$ (using continuous lines).
\par
Obviously, the roots of $Q(w)$ are
$$
  w_\pm  := \frac{\lambda}{2\e a}-1\pm \sqrt{\left(\frac{\lambda}{2\e a}-1\right)^2-1-\frac{bc}{ad}+
  \frac{b\mu}{\e ad} } = \frac{\lambda}{2\e a}-1\pm \frac{1}{\e a}\sqrt{\l^2-4\e^2\frac{abc}{d}}.
$$
Thus, if we further impose that
$$
  \frac{b\mu}{bc+ad} = \e^*<\e<\frac{\l}{2a},
$$
with $\e$ sufficiently close to $\e^*$, then $w_+>w_->0$ and, hence,  $P(w,\l)$ has three simple positive roots if $\l\in(\Phi(\mu),\Phi(\mu)+\eta)$ and two if $\l\in(\Phi(\mu)-\eta,\Phi(\mu))$, provided  $bc>ad$ and $\e>\e^*$ is sufficiently close to $\e^*$. The assumption $bc>ad$ is necessary and sufficient so that
$ \frac{b\mu}{bc+ad}<\frac{\l}{2a}$. Finally, since
$$
  \l-\e a w_+= \frac{\l}{2}+\e a-\frac{1}{2}\sqrt{\l^2-4\e^2\frac{abc}{d}}>\e a>0\qquad\hbox{if}\;\;\l=\Phi(\mu),
$$
by \eqref{vi.9} and \eqref{vi.3}, it becomes apparent that, if $bc>ad$ and $\e>\e^*$ is sufficiently close to $\e^*$, then \eqref{6.1} has three coexistence states
if $\l\in(\Phi(\mu),\Phi(\mu)+\eta)$ and only two if $\l\in(\Phi(\mu)-\eta,\Phi(\mu))$. This phenomenology has been illustrated in Figure \ref{Fig8}, whose right picture shows a paradigmatic $S$-shaped component $\mathscr{C}_\e^+$ for $\e>\e^*$, $\e \sim \e^*$, when $bc>ad$.

\begin{figure}[ht!]
\centering
\includegraphics[scale=0.215]{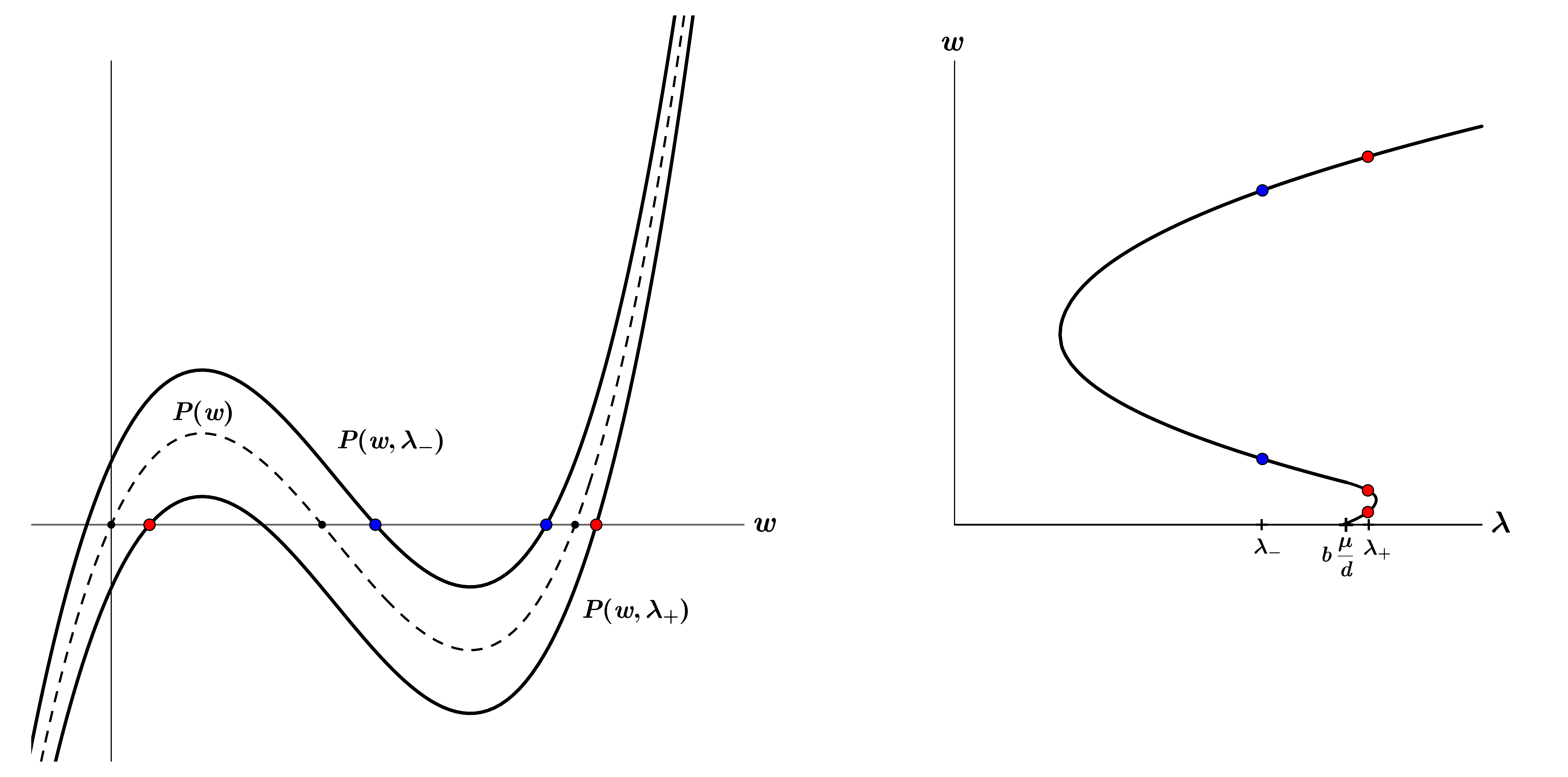}
\caption{The $S$-shaped component of constant coexistence states}
\label{Fig8}
\end{figure}

According to \eqref{vi.4}, the coefficients of $P(w,\l)$ are decreasing with respect to $\l$. Thus, in the region  $w\geq 0$, the bigger is $\l>\Phi(\mu)$, the smaller are the graphs of the polynomials $P(w,\l)$ (see the first picture of Figure \ref{Fig8}). Therefore, there exists $\l^*>\Phi(\mu)$ such that
$z(\l^*)=w_-(\l^*)$, which corresponds with the subcritical turning point of the $S$-shaped component $\mathscr{C}_\e^+$.
	
\bibliographystyle{siam}
\bibliography{LG-MH_diff}

\begin{thebibliography}{10}

\bibitem{Am-1976}
{\sc H.~Amann}, {\em Fixed point equations and nonlinear eigenvalue problems in
  ordered {Banach} spaces}, SIAM Rev., 18 (1976), pp.~620--709.

\bibitem{ALG-98}
{\sc H.~Amann and J.~L{\'o}pez-G{\'o}mez}, {\em A priori bounds and multiple
  solutions for superlinear indefinite elliptic problems}, J. Differ.
  Equations, 146 (1998), pp.~336--374.

\bibitem{BNV}
{\sc H.~Berestycki, L.~Nirenberg, and S.~R.~S. Varadhan}, {\em The principal
  eigenvalue and maximum principle for second-order elliptic operators in
  general domains}, Comm. Pure Appl. Math., 47 (1994), pp.~47--92.

\bibitem{CCLG}
{\sc S.~Cano-Casanova and J.~L\'{o}pez-G\'{o}mez}, {\em Properties of the
  principal eigenvalues of a general class of non-classical mixed boundary
  value problems}, J. Differential Equations, 178 (2002), pp.~123--211.

\bibitem{CELG}
{\sc A.~Casal, J.~C. Eilbeck, and J.~L\'{o}pez-G\'{o}mez}, {\em Existence and
  uniqueness of coexistence states for a predator-prey model with diffusion},
  Differential Integral Equations, 7 (1994), pp.~411--439.

\bibitem{Rab-71b}
{\sc M.~G. Crandall and P.~H. Rabinowitz}, {\em Bifurcation from simple
  eigenvalues}, J. Funct. Anal., 8 (1971), pp.~321--340.

\bibitem{CR73}
\leavevmode\vrule height 2pt depth -1.6pt width 23pt, {\em Bifurcation,
  perturbation of simple eigenvalues and linearized stability}, Arch. Rational
  Mech. Anal., 52 (1973), pp.~161--180.

\bibitem{Da-1983}
{\sc E.~N. Dancer}, {\em On the indices of fixed points of mappings in cones
  and applications}, J. Math. Anal. Appl., 91 (1983), pp.~131--151.

\bibitem{DLG}
{\sc D.~Daners and J.~L\'{o}pez-G\'{o}mez}, {\em Global dynamics of generalized
  logistic equations}, Adv. Nonlinear Stud., 18 (2018), pp.~217--236.

\bibitem{DL-1997}
{\sc Y.~Du and Y.~Lou}, {\em Some uniqueness and exact multiplicity results for
  a predator-prey model}, Trans. Am. Math. Soc., 349 (1997), pp.~2443--2475.

\bibitem{DL-1998}
\leavevmode\vrule height 2pt depth -1.6pt width 23pt, {\em {{\(S\)}}-shaped
  global bifurcation curve and {Hopf} bifurcation of positive solutions to a
  predator-prey model}, J. Differ. Equations, 144 (1998), pp.~390--440.

\bibitem{DL-2001}
\leavevmode\vrule height 2pt depth -1.6pt width 23pt, {\em Qualitative
  behaviour of positive solutions of a predator-prey model: {Effects} of
  saturation}, Proc. R. Soc. Edinb., Sect. A, Math., 131 (2001), pp.~321--349.

\bibitem{DS-2006}
{\sc Y.~Du and J.~Shi}, {\em A diffusive predator-prey model with a protection
  zone}, J. Differ. Equations, 229 (2006), pp.~63--91.

\bibitem{FRLG}
{\sc S.~Fern\'{a}ndez-Rinc\'{o}n and J.~L\'{o}pez-G\'{o}mez}, {\em The singular
  perturbation problem for a class of generalized logistic equations under
  non-classical mixed boundary conditions}, Adv. Nonlinear Stud., 19 (2019),
  pp.~1--27.

\bibitem{FRLG-2021}
{\sc S.~Fern{\'a}ndez-Rinc{\'o}n and J.~L{\'o}pez-G{\'o}mez}, {\em The {Picone}
  identity: a device to get optimal uniqueness results and global dynamics in
  population dynamics}, Nonlinear Anal., Real World Appl., 60 (2021), p.~41.

\bibitem{FKLGM}
{\sc J.~M. Fraile, P.~Koch~Medina, J.~L\'{o}pez-G\'{o}mez, and S.~Merino}, {\em
  Elliptic eigenvalue problems and unbounded continua of positive solutions of
  a semilinear elliptic equation}, J. Differential Equations, 127 (1996),
  pp.~295--319.

\bibitem{HFR}
{\sc H.~Freedman}, {\em Deterministic Mathematical Models in Population
  Biology}, Marcel and Dekker, New York, 1980.

\bibitem{Hen}
{\sc D.~Henry}, {\em Geometric {T}heory of {P}arabolic {D}ifferential
  {E}quations}, vol.~840 of Lectures {N}otes in {M}athematics, Springer,
  Berlin, 1981.

\bibitem{Hsu}
{\sc S.~B. Hsu}, {\em On global stability of a predator-prey system}, Math.
  Biosc., 39 (1978), pp.~1--10.

\bibitem{LG-1992}
{\sc J.~L{\'o}pez-G{\'o}mez}, {\em Positive periodic solutions of
  {Lotka}-{Volterra} reaction-diffusion systems}, Differ. Integral Equ., 5
  (1992), pp.~55--72.

\bibitem{LG01}
\leavevmode\vrule height 2pt depth -1.6pt width 23pt, {\em Spectral theory and
  nonlinear functional analysis}, vol.~426 of Chapman Hall/CRC Res. Notes
  Math., Boca Raton, FL: Chapman \& Hall/CRC, 2001.

\bibitem{LG13}
{\sc J.~L\'{o}pez-G\'{o}mez}, {\em Linear second order elliptic operators},
  World Scientific Publishing Co. Pte. Ltd., Hackensack, NJ, 2013.

\bibitem{LGMM}
{\sc J.~L\'{o}pez-G\'{o}mez and M.~Molina-Meyer}, {\em The maximum principle
  for cooperative weakly coupled elliptic systems and some applications},
  Differential Integral Equations, 7 (1994), pp.~383--398.

\bibitem{LGMH20}
{\sc J.~L\'{o}pez-G\'{o}mez and E.~Mu{\~{n}}oz-Hern\'{a}ndez}, {\em A spatially
  heterogeneous predator-prey model}, Discrete Contin. Dyn. Syst. Ser. B, 26
  (2021), pp.~2085--2113.

\bibitem{LGP-1992}
{\sc J.~L{\'o}pez-G{\'o}mez and R.~Pardo San~Gil}, {\em Coexistence regions in
  {Lotka}-{Volterra} models with diffusion}, Nonlinear Anal., Theory Methods
  Appl., 19 (1992), pp.~11--28.

\bibitem{LGRab}
{\sc J.~L{\'o}pez-G{\'o}mez and P.~H. Rabinowitz}, {\em The effects of spatial
  heterogeneities on some multiplicity results}, Disc. Cont. Dyn. Systems, 36
  (2016), pp.~941--952.

\bibitem{May}
{\sc R.~May}, {\em Stability and Complexity in Model Ecosystems}, Princeton
  University Press, Princeton, 1974.

\bibitem{Why-1964}
{\sc G.~T. Whyburn}, {\em Topological analysis}, vol.~23 of Princeton Math.
  Ser., Princeton University Press, Princeton, NJ, 1964.

\end{thebibliography}

\end{document}